\documentclass[11pt]{amsart}
\usepackage[T1]{fontenc}
\usepackage[utf8]{inputenc}
\usepackage{lmodern}
\usepackage{microtype}
\usepackage[margin=1.15in]{geometry}
\usepackage{amsmath,amssymb,amsthm,mathtools}
\usepackage{graphicx}
\usepackage{enumitem}
\usepackage{xurl}
\usepackage{hyperref}
\usepackage{aliascnt}
\usepackage[nameinlink,noabbrev]{cleveref}
\graphicspath{{figures/}}
\hypersetup{
  hidelinks,
  pdftitle={Explicit interpolations among the Sierpinski, Rauzy, and Apollonian gaskets},
  pdfauthor={Bernat Espigule},
  pdfsubject={Research article on explicit interpolations among triangular fractals},
  pdfkeywords={Sierpinski gasket, Rauzy gasket, Apollonian gasket, projective iterated function system, Mobius transformation, Hausdorff dimension, finite approximants}
}
\newtheorem{theorem}{Theorem}[section]
\newaliascnt{proposition}{theorem}
\newtheorem{proposition}[proposition]{Proposition}
\aliascntresetthe{proposition}
\newaliascnt{lemma}{theorem}
\newtheorem{lemma}[lemma]{Lemma}
\aliascntresetthe{lemma}
\newaliascnt{corollary}{theorem}
\newtheorem{corollary}[corollary]{Corollary}
\aliascntresetthe{corollary}
\theoremstyle{definition}
\newaliascnt{definition}{theorem}
\newtheorem{definition}[definition]{Definition}
\aliascntresetthe{definition}
\newaliascnt{remark}{theorem}
\newtheorem{remark}[remark]{Remark}
\aliascntresetthe{remark}
\newaliascnt{example}{theorem}

\aliascntresetthe{example}

\crefname{theorem}{theorem}{theorems}
\Crefname{theorem}{Theorem}{Theorems}
\crefname{proposition}{proposition}{propositions}
\Crefname{proposition}{Proposition}{Propositions}
\crefname{lemma}{lemma}{lemmas}
\Crefname{lemma}{Lemma}{Lemmas}
\crefname{corollary}{corollary}{corollaries}
\Crefname{corollary}{Corollary}{Corollaries}
\crefname{remark}{remark}{remarks}
\Crefname{remark}{Remark}{Remarks}
\crefname{definition}{definition}{definitions}
\Crefname{definition}{Definition}{Definitions}

\newcommand{\R}{\mathbb{R}}
\newcommand{\C}{\mathbb{C}}
\newcommand{\N}{\mathbb{N}}
\newcommand{\D}{\Delta}
\newcommand{\Disk}{\overline{\mathbb{D}}}

\newcommand{\HH}{\mathcal{H}}
\newcommand{\K}{\mathcal{K}}

\newcommand{\diam}{\operatorname{diam}}

\newcommand{\dimH}{\operatorname{dim_{H}}}
\newcommand{\eps}{\varepsilon}
\newcommand{\ii}{\mathrm{i}}

\title[Explicit gasket interpolations]{Explicit interpolations among the Sierpi\'nski, Rauzy, and Apollonian gaskets}
\author{Bernat Espigule}
\address{Universitat de Girona\\Girona, Catalonia, Spain}
\email{bernat@espigule.com}
\date{June 2026}
\subjclass[2020]{Primary 28A80; Secondary 37F35, 37C45, 52C26}
\keywords{Sierpi\'nski gasket, Rauzy gasket, Apollonian gasket, projective iterated function system, M\"obius transformation, Hausdorff dimension, finite approximants, fractal geometry}

\begin{document}
\begin{abstract}
We study two explicit one-parameter families organized around the affine Sierpi\'nski gasket. The first is an affine-projective interpolation from the Sierpi\'nski gasket to the Rauzy gasket: the first-level hole is fixed throughout the family, the symbolic quotient remains the classical Sierpi\'nski quotient, the associated three-dimensional stack is homeomorphic to $S\times[0,1]$, and the constant-address cells display a transition from uniform contraction to the non-uniform projective scaling of the Rauzy endpoint. The second is a M\"obius deformation from the Sierpi\'nski gasket to the equilateral Apollonian gasket: for every hyperbolic parameter the maps form a conformal iterated function system on a common disk, vary continuously on compact subintervals away from the parabolic endpoint, and admit an exact variational formula for the Hausdorff dimension. At the parabolic endpoint we give an explicit boundary-normalized Apollonian model whose distinguished side is the segment $[0,1]$; in this normalization two endpoint branches are $z/(z+1)$ and $1/(2-z)$, the same fractional transformations that occur on the distinguished Rauzy side. This yields a canonical Rauzy--Apollonian homeomorphism fixing the common side pointwise. The constructions provide an exact framework for comparing projective, affine, and conformal triangular fractals through explicit maps, finite-level approximants, and three-dimensional embeddings.
\end{abstract}
\maketitle

\section{Introduction}
The Sierpi\'nski gasket is a natural meeting point between several triangular fractal geometries \cite{Sierpinski1915,Hutchinson1981}. In one direction it appears as the affine representative of the symbolic quotient underlying the Rauzy gasket, a projective fractal from multidimensional continued fractions and Arnoux--Rauzy dynamics \cite{ArnouxStarosta2013,Jurga2026}. In another direction it appears as the linear endpoint of a deformation toward the equilateral Apollonian gasket, where the straight triangular gaps bend into mutually tangent circular gaps \cite{McMullen1998,VytnovaWormell2025}.

This paper treats two explicit deformations of the affine Sierpi\'nski gasket. In the affine-projective direction, the endpoint at $t=1$ is the Rauzy gasket and the endpoint at $t=0$ is the usual Sierpi\'nski system. In the conformal direction, an explicit one-parameter family of M\"obius maps reaches the equilateral Apollonian gasket at a parabolic endpoint.

The two deformations are not dynamically identical. The Rauzy endpoint is projective and non-uniformly scaled, whereas the Apollonian endpoint is conformal and parabolic. The common role of the Sierpi\'nski gasket is structural: it supplies the affine model from which both endpoint geometries are reached by exact parameterized maps.

At the Apollonian endpoint we also record a boundary normalization: the same gasket is M\"obius equivalent to a model in the upper half-plane whose distinguished side is $[0,1]$ and whose adjacent outer circles have radius $1/2$ and centers $\ii/2$ and $1+\ii/2$. On this side the branches are $z\mapsto z/(z+1)$ and $z\mapsto1/(2-z)$, exactly as on the corresponding Rauzy side.

The purpose of the paper is to keep the comparison at the level of explicit maps, symbolic codings, finite-level approximants, and the associated three-dimensional embeddings. The figures should therefore be read as finite-level approximants of the systems defined in the corresponding sections.

\section{The Rauzy--Sierpi\'nski affine-projective stack}
This section gives the affine-projective construction. We first define the maps on the standard simplex, then identify the endpoint systems and record the symbolic and topological consequences needed for the stack construction.

\subsection{The interpolating family}

\subsubsection{Simplex coordinates}

Let
\[
\D:=\bigl\{(x_1,x_2,x_3)\in\R_{\ge 0}^3:x_1+x_2+x_3=1\bigr\}
\]
be the standard $2$-simplex, with vertices
\[
 e_1=(1,0,0),\qquad e_2=(0,1,0),\qquad e_3=(0,0,1).
\]
We use the affine chart
\[
\iota:T\to\D,\qquad \iota(u,v)=(1-u-v,u,v),
\]
where
\[
T:=\bigl\{(u,v)\in\R^2:u\ge0,\ v\ge0,\ u+v\le1\bigr\}.
\]
In this chart the vertices are
\[
 e_1=(0,0),\qquad e_2=(1,0),\qquad e_3=(0,1).
\]
For display purposes we also use the affine equilateral-triangle embedding
\[
\mathcal E(u,v):=\left(u+\frac{v}{2}-\frac12,\frac{\sqrt 3}{2}v\right)\in\R^2,
\]
which sends $e_1,e_2,e_3$ to the vertices of an equilateral triangle. This changes only the presentation of the figures: all formulas and proofs continue to use the chart $T$.
For $i\neq j$ let
\[
 m_{ij}:=\frac{e_i+e_j}{2}.
\]
We also write
\[
T_1:=\{u+v\le 1/2\},\qquad T_2:=\{u\ge 1/2\},\qquad T_3:=\{v\ge 1/2\},
\]
so that $T=T_1\cup T_2\cup T_3\cup H$, where the open central hole is
\[
H:=\{u<1/2,\ v<1/2,\ u+v>1/2\}.
\]
See \cref{fig:firstlevel}.

\begin{figure}[t]
    \centering
    \includegraphics[width=0.46\textwidth]{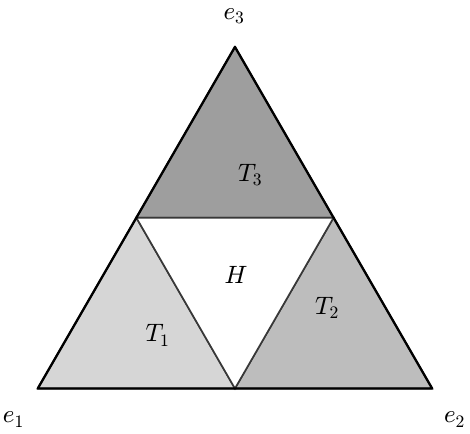}
    \caption{The parameter-independent first-level decomposition $T=T_1\cup T_2\cup T_3\cup H$, displayed through the equilateral embedding $\mathcal E$. The central hole $H$ is the same for every $t\in[0,1]$.}
    \label{fig:firstlevel}
\end{figure}

\subsubsection{Definition of the maps}

For $t\in[0,1]$ define matrices
\[
M_1(t)=\begin{pmatrix}2-t&1&1\\0&1&0\\0&0&1\end{pmatrix},\quad
M_2(t)=\begin{pmatrix}1&0&0\\1&2-t&1\\0&0&1\end{pmatrix},\quad
M_3(t)=\begin{pmatrix}1&0&0\\0&1&0\\1&1&2-t\end{pmatrix}.
\]
Since $\det M_i(t)=2-t>0$, each $M_i(t)$ is invertible. Let $\mathbf 1=(1,1,1)^\top$ and set
\[
 f_{i,t}(x):=\frac{M_i(t)x}{\mathbf 1^\top M_i(t)x},\qquad x\in\D.
\]
In the chart $T$ these maps become
\begin{align}
 f_{1,t}(u,v)&=\left(\frac{u}{2-t+t(u+v)},\frac{v}{2-t+t(u+v)}\right),\label{eq:f1chart}\\
 f_{2,t}(u,v)&=\left(\frac{1+(1-t)u}{2-tu},\frac{v}{2-tu}\right),\label{eq:f2chart}\\
 f_{3,t}(u,v)&=\left(\frac{u}{2-tv},\frac{1+(1-t)v}{2-tv}\right).\label{eq:f3chart}
\end{align}
At $t=0$ one recovers the affine Sierpi\'nski maps
\[
 g_1(u,v)=\left(\frac u2,\frac v2\right),\quad
 g_2(u,v)=\left(\frac{1+u}{2},\frac v2\right),\quad
 g_3(u,v)=\left(\frac u2,\frac{1+v}{2}\right),
\]
while at $t=1$ one obtains the classical Rauzy inverse branches from \cite[\S3.1]{ArnouxStarosta2013}.

\begin{proposition}[Parameter-independent first-level geometry]\label{prop:firstlevel}
For every $t\in[0,1]$ one has
\[
 f_{1,t}(T)=T_1,\qquad f_{2,t}(T)=T_2,\qquad f_{3,t}(T)=T_3.
\]
Equivalently, in barycentric coordinates,
\[
 f_{i,t}(\D)=\D_i:=\{x\in\D:x_i\ge 1/2\}.
\]
Moreover,
\[
 T_i\cap T_j=\{m_{ij}\}\qquad (i\neq j).
\]
\end{proposition}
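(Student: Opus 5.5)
We work in barycentric coordinates and use the projective structure of $f_{i,t}$. Each map is the projectivization of the invertible linear map $M_i(t)$. That linear map sends the closed positive cone $\R_{\ge0}^3$ onto the cone generated by the columns of $M_i(t)$, and normalizing by $\mathbf 1^\top M_i(t)x$ (positive on $\D$, since all entries are $\ge0$ and the diagonal is positive) sends $\D$ bijectively onto the triangle spanned by the normalized columns. So $f_{i,t}(\D)$ is the convex hull of the images of the three vertices $e_1,e_2,e_3$.

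We do this for $i=1$; the cases $i=2,3$ follow by the cyclic symmetry of the matrices, permuting coordinates. The columns of $M_1(t)$ are $(2-t,0,0)$, $(1,1,0)$ and $(1,0,1)$. After normalization they give
\[
f_{1,t}(e_1)=e_1,\qquad f_{1,t}(e_2)=m_{12},\qquad f_{1,t}(e_3)=m_{13},
\]
independently of $t$, because $2-t>0$. Hence $f_{1,t}(\D)=\operatorname{conv}\{e_1,m_{12},m_{13}\}$.

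The claim $f_{1,t}(\D)=\D_1$ needs a direct check. A point of the simplex $\{x_1\ge1/2\}\cap\D$ is a triangle with vertices $e_1$ (the corner), $m_{12}$ and $m_{13}$ (the points on the line $x_1=1/2$). So the two triangles agree. In the chart, $x_1=1-u-v\ge1/2$ is $u+v\le1/2$, giving $T_1$; likewise $x_2=u\ge1/2$ gives $T_2$ and $x_3=v\ge1/2$ gives $T_3$. If one wants the chart formulas to carry the argument, the image can instead be checked on \eqref{eq:f1chart}: the sum of the coordinates is $s/(2-t+ts)$ with $s=u+v\in[0,1]$. This is increasing in $s$ and equals $1/2$ at $s=1$, and rays through the origin are preserved. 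Both routes give the result.

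For the intersections, $\D_i\cap\D_j$ requires $x_i\ge1/2$, $x_j\ge1/2$ and $x_1+x_2+x_3=1$ with all coordinates nonnegative. This forces $x_i=x_j=1/2$ and the remaining coordinate to be $0$, which is the point $m_{ij}$; conversely $m_{ij}$ lies in both sets. The only step needing any care is justifying that a projective map with positive denominator sends the simplex onto the convex hull of its vertex images. This holds because, on each line segment, the map restricts to a monotone fractional-linear reparametrization, so segments go to segments and convex combinations go to convex combinations (with reweighted coefficients), giving both inclusions.
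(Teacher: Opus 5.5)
Your proof is correct, and your main route differs from the paper's. The alternative you sketch at the end (rays through the origin preserved, $s\mapsto s/(2-t+ts)$ increasing with value $1/2$ at $s=1$) is exactly the paper's argument. The paper stays in the chart $T$, treats $f_{1,t}$ as a radial rescaling about $e_1=(0,0)$, and handles $f_{2,t},f_{3,t}$ by permuting coordinates. You instead use the linear structure. Since $\mathbf 1^\top M_i(t)$ has positive entries, $f_{i,t}(\D)$ is the triangle on the normalized columns. For $M_1(t)$ these are $e_1,m_{12},m_{13}$ for every $t$, because the only $t$-dependent entry, $2-t$, is the sole nonzero entry of its column and disappears on normalization.

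The step you single out as needing care can be made more direct. Write $f_{i,t}(x)=\sum_j\lambda_j\hat c_j$, with $c_j$ the $j$-th column of $M_i(t)$, $\hat c_j=c_j/(\mathbf 1^\top c_j)$, and $\lambda_j=x_j\,\mathbf 1^\top c_j/\sum_k x_k\,\mathbf 1^\top c_k$. Since $x\mapsto\lambda$ is a bijection of the simplex onto itself, both inclusions follow immediately; your segment-by-segment argument is valid but a longer way of saying the same thing. Similarly, the identification $\D_1=\operatorname{conv}\{e_1,m_{12},m_{13}\}$, which you state without detail, is made explicit by $x=(2x_1-1)e_1+2x_2m_{12}+2x_3m_{13}$.

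What your route buys is uniformity and a conceptual explanation of the $t$-independence. All three images are read off from the columns at once. You also obtain for free the vertex identities $f_{i,t}(e_i)=e_i$ and $f_{i,t}(e_j)=m_{ij}$, which the paper invokes separately, without proof, in the proof of the parameter-independent address relation. The paper's radial argument, in turn, is fully elementary in the chart. It also exposes the ray-preserving structure of $f_{1,t}$, which the paper reuses to derive the closed formula for the iterates $f_{1,t}^{\,n}$.
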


\begin{proof}
For $f_{1,t}$, let $s=u+v$. Then \eqref{eq:f1chart} gives
\[
 f_{1,t}(u,v)=\frac{1}{2-t+ts}(u,v),\qquad s' := u'+v' = \frac{s}{2-t+ts}.
\]
As $s$ varies in $[0,1]$, the function $s\mapsto s/(2-t+ts)$ is increasing and maps $[0,1]$ onto $[0,1/2]$. The ratio $v/u$ is preserved, so every point of $T_1$ is reached along the unique ray from the origin through that point. Hence $f_{1,t}(T)=T_1$.

The proofs for $f_{2,t}$ and $f_{3,t}$ are identical after permuting coordinates. The intersection statement is then immediate from the explicit descriptions of $T_1,T_2,T_3$.
\end{proof}

The piecewise inverse map $F_t$ on $T_1\cup T_2\cup T_3$ is therefore well defined by
\[
F_t|_{T_i}=f_{i,t}^{-1}.
\]
Explicitly,
\begin{align*}
F_t(u,v)=
\begin{cases}
\left(\dfrac{(2-t)u}{1-t(u+v)},\dfrac{(2-t)v}{1-t(u+v)}\right),& (u,v)\in T_1,\\[1.1ex]
\left(\dfrac{2u-1}{1-t+tu},\dfrac{(2-t)v}{1-t+tu}\right),& (u,v)\in T_2,\\[1.1ex]
\left(\dfrac{(2-t)u}{1-t+tv},\dfrac{2v-1}{1-t+tv}\right),& (u,v)\in T_3.
\end{cases}
\end{align*}
At $t=0$ this is the usual expanding map for the Sierpi\'nski gasket; at $t=1$ it is the Rauzy map.

\subsection{Attractors in the contracting regime}

\begin{proposition}[Uniform contraction for $t<1$]\label{prop:contraction}
Fix $t\in[0,1)$ and set
\[
q_t:=\frac{1}{2-t}<1.
\]
Then each map $f_{i,t}:T\to T$ is $q_t$-Lipschitz for the norm
\[
\|(u,v)\|_\infty:=\max\{|u|,|v|\}.
\]
Consequently, the Hutchinson operator
\[
\HH_t(X):=f_{1,t}(X)\cup f_{2,t}(X)\cup f_{3,t}(X)
\]
is a strict contraction on the space of nonempty compact subsets of $T$ endowed with the Hausdorff metric induced by $\|\cdot\|_\infty$.
\end{proposition}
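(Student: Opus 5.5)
The plan is to bound the operator norm of the Jacobian of each $f_{i,t}$ uniformly on $T$, with respect to $\|\cdot\|_\infty$, and then integrate along segments. This works because $T$ is convex, so for $p,q\in T$ the segment $[p,q]$ stays in $T$. The mean value inequality then gives $\|f_{i,t}(p)-f_{i,t}(q)\|_\infty\le \sup_{T}\|Df_{i,t}\|_{\infty\to\infty}\,\|p-q\|_\infty$. Recall that the operator norm induced by $\|\cdot\|_\infty$ is the maximal absolute row sum of the matrix. So the whole task reduces to showing that, for every row of $Df_{i,t}$ and every point of $T$, the absolute row sum is at most $q_t=1/(2-t)$. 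It suffices to treat $f_{1,t}$ and $f_{2,t}$. The map $f_{3,t}$ is $f_{2,t}$ conjugated by the swap $(u,v)\mapsto(v,u)$, which is an isometry of $\|\cdot\|_\infty$ preserving $T$.

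For $f_{1,t}$, write $D=2-t+t(u+v)$, so $f_{1,t}=(u,v)/D$. The first row of $Df_{1,t}$ is $\bigl(1/D-tu/D^2,\,-tu/D^2\bigr)$. Since $D\ge 2-t+tu\ge tu$, the first entry is nonnegative, so the row sum is exactly $1/D$. Moreover $1/D\le 1/(2-t)=q_t$ because $u+v\ge0$. The second row is symmetric and gives the same bound.

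For $f_{2,t}$, set $E=2-tu\in[2-t,2]$. A direct computation gives $\partial_u u'=(2-t)/E^2$ and $\partial_v u'=0$, so the first row sum is $(2-t)/E^2\le 1/(2-t)$. The second row is $(tv/E^2,\,1/E)$, with sum $(E+tv)/E^2$. This is the only step needing care, since the bound must use the constraint $u+v\le1$. From $u+v\le 1$ we get $tv\le t-tu=E-(2-t)$. Hence
\[
(2-t)(E+tv)\le (2-t)\bigl(2E-(2-t)\bigr)=E^2-\bigl(E-(2-t)\bigr)^2\le E^2,
\]
which yields $(E+tv)/E^2\le q_t$. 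I expect this to be the main obstacle. Without the simplex constraint the bound fails: at $u=0$, $v$ large, the row sum exceeds $q_t$.

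Finally, the contraction of $\HH_t$ follows from the standard Hutchinson argument. If each $f_{i,t}$ is $q_t$-Lipschitz, then $d_H(f_{i,t}(X),f_{i,t}(Y))\le q_t\,d_H(X,Y)$. Combining this with $d_H\bigl(\bigcup_i A_i,\bigcup_i B_i\bigr)\le\max_i d_H(A_i,B_i)$ gives $d_H(\HH_t X,\HH_t Y)\le q_t\,d_H(X,Y)$ with $q_t<1$ for $t<1$. Proposition~\ref{prop:firstlevel} guarantees that $\HH_t$ maps compact subsets of $T$ into $T$.
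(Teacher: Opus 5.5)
Your proof is correct and matches the paper's argument essentially line by line. Both bound the maximal absolute row sum of $Df_{1,t}$ and $Df_{2,t}$ on $T$ by $q_t$, use $v\le 1-u$ for the second row of $Df_{2,t}$, treat $f_{3,t}$ by symmetry, and conclude with the standard Hutchinson estimate. The only cosmetic difference is that you close the key inequality with the identity $(2-t)\bigl(2E-(2-t)\bigr)=E^2-\bigl(E-(2-t)\bigr)^2$, whereas the paper uses that $x\mapsto (2x-(2-t))/x^2$ is decreasing on $[2-t,2]$.
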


\begin{proof}
For $f_{1,t}$ let $D=2-t+t(u+v)$. Differentiating \eqref{eq:f1chart} yields
\[
Df_{1,t}(u,v)=\frac{1}{D^2}
\begin{pmatrix}
2-t+tv&-tu\\
-tv&2-t+tu
\end{pmatrix}.
\]
The absolute row sums are
\[
\frac{2-t+t(u+v)}{D^2}=\frac1D\le \frac{1}{2-t}=q_t.
\]
Thus $\|Df_{1,t}(u,v)\|_{\infty\to\infty}\le q_t$.

For $f_{2,t}$ let $E=2-tu$. Differentiating \eqref{eq:f2chart} gives
\[
Df_{2,t}(u,v)=\frac{1}{E^2}
\begin{pmatrix}
2-t&0\\
tv&E
\end{pmatrix}.
\]
The first absolute row sum is $(2-t)/E^2\le 1/(2-t)$. For the second row,
\[
\frac{tv+E}{E^2}\le \frac{E+(E-(2-t))}{E^2}=\frac{2E-(2-t)}{E^2}\le \frac{1}{2-t},
\]
because $tv\le t(1-u)=E-(2-t)$ and the function $x\mapsto (2x-(2-t))/x^2$ is decreasing on $[2-t,2]$. The proof for $f_{3,t}$ is symmetric.

The claim about $\HH_t$ follows from the usual Hausdorff-metric estimate for finite unions of Lipschitz maps.
\end{proof}

\begin{definition}
For $t\in[0,1)$, let $K_t\subset T$ denote the unique nonempty compact set satisfying
\[
K_t=f_{1,t}(K_t)\cup f_{2,t}(K_t)\cup f_{3,t}(K_t).
\]
We also set $K_0=:S$, the Sierpi\'nski gasket.
\end{definition}

\begin{figure}[t]
    \centering
    \includegraphics[width=\textwidth]{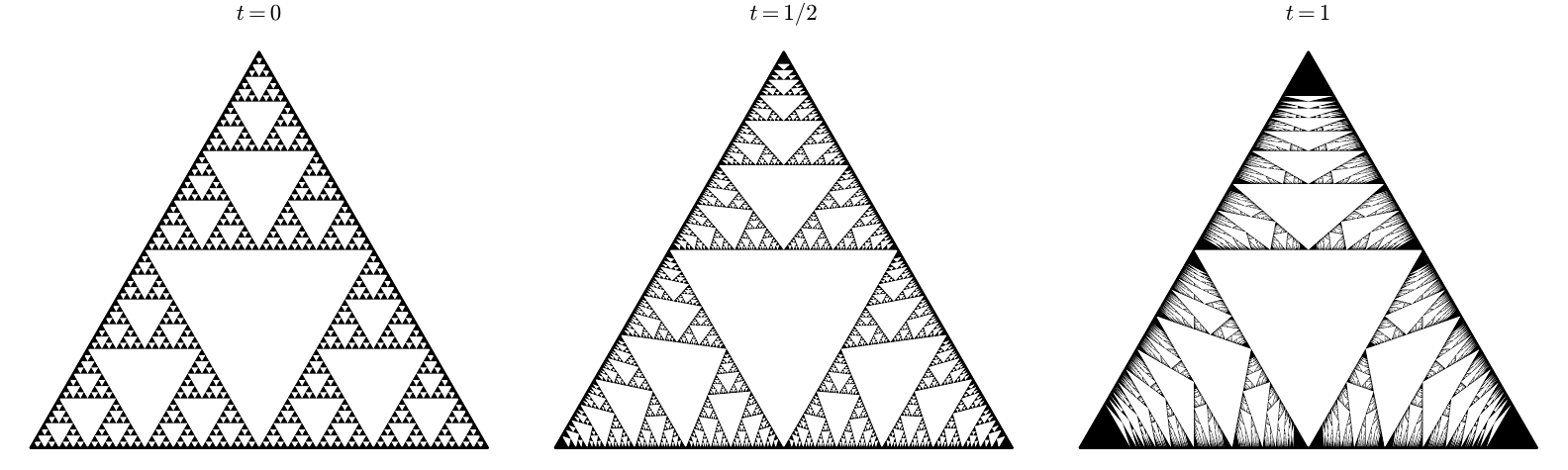}
    \caption{Finite-level approximants for $t=0$, $t=1/2$, and $t=1$, displayed in the same equilateral embedding $\mathcal E$. The first-level hole is unchanged, while the geometry of deeper cells passes from affine to projective. In the Rauzy endpoint panel, the larger black triangular gaps are finite-level cylinder complements, left visible to emphasize the non-homogeneous projective scaling of the Rauzy system.}
    \label{fig:crosssections}
\end{figure}

\subsection{Coding and topological constancy below the Rauzy endpoint}

Let $\Sigma:=\{1,2,3\}^{\N}$ be the full one-sided shift. For a finite word
\[
w=i_1\cdots i_n\in\{1,2,3\}^{<\infty}
\]
we write
\[
f_{w,t}:=f_{i_1,t}\circ\cdots\circ f_{i_n,t}.
\]

\begin{proposition}[Coding maps]\label{prop:coding}
For every $t\in[0,1)$ and every $\omega=(\omega_n)_{n\ge1}\in\Sigma$, the limit
\[
\pi_t(\omega):=\lim_{n\to\infty} f_{\omega_1,t}\circ\cdots\circ f_{\omega_n,t}(x_\ast)
\]
exists, is independent of the base point $x_\ast\in T$, and belongs to $K_t$. The map $\pi_t:\Sigma\to K_t$ is continuous and onto.
\end{proposition}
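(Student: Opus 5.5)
The plan is the standard Hutchinson argument, driven by the uniform contraction of \cref{prop:contraction}. Fix $t\in[0,1)$ and write $q=q_t=1/(2-t)<1$. The set $T$ is convex, and each $f_{i,t}$ is $q$-Lipschitz for $\|\cdot\|_\infty$ on $T$ (the derivative bound integrates along segments). Hence for every finite word $w$ of length $n$ the composite $f_{w,t}$ is $q^n$-Lipschitz on $T$. Setting $d:=\diam_\infty T=1$, we get
\[
\diam f_{w,t}(T)\le q^n.
\]

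\emph{Existence and independence of the base point.} Since $f_{i,t}(T)\subset T$ by \cref{prop:firstlevel}, the cylinder sets $C_n(\omega):=f_{\omega_1\cdots\omega_n,t}(T)$ form a nested decreasing sequence of nonempty compact sets whose diameters are at most $q^n\to0$. Their intersection is therefore a single point, which we call $\pi_t(\omega)$. For any $x_\ast\in T$ the point $f_{\omega_1\cdots\omega_n,t}(x_\ast)$ lies in $C_n(\omega)$, so it lies within $q^n$ of $\pi_t(\omega)$. This gives both existence of the limit and independence of $x_\ast$.

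\emph{Continuity.} If $\omega$ and $\omega'$ agree in their first $n$ symbols, then both images lie in the same $C_n$, so $\|\pi_t(\omega)-\pi_t(\omega')\|_\infty\le q^n$. Thus $\pi_t$ is Lipschitz (in fact Hölder) for the standard metric $2^{-\min\{k:\omega_k\ne\omega'_k\}}$ on $\Sigma$.

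\emph{Image lies in $K_t$.} Take $x_\ast\in K_t$. Invariance $\HH_t(K_t)=K_t$ gives $f_{w,t}(K_t)\subset K_t$ for every finite word $w$, so every term of the sequence lies in $K_t$. Since $K_t$ is closed, the limit $\pi_t(\omega)$ lies in $K_t$.

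\emph{Surjectivity.} This is the only step requiring a small idea. Iterating the fixed-point equation gives
\[
K_t=\bigcup_{|w|=n}f_{w,t}(K_t)\quad\text{for every } n.
\]
Given $y\in K_t$, choose $i_1$ with $y\in f_{i_1,t}(K_t)$. Inductively, if $y\in f_{i_1\cdots i_n,t}(K_t)$, write $y=f_{i_1\cdots i_n,t}(z)$ with $z\in K_t$, and choose $i_{n+1}$ with $z\in f_{i_{n+1},t}(K_t)$. This produces $\omega$ with $y\in C_n(\omega)$ for all $n$, hence $y=\pi_t(\omega)$.

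Alternatively, $\pi_t(\Sigma)$ is a nonempty compact set (the continuous image of a compact space). It satisfies $\pi_t(\Sigma)=\bigcup_i f_{i,t}(\pi_t(\Sigma))$, because $\pi_t(i\omega)=f_{i,t}(\pi_t(\omega))$ by continuity of $f_{i,t}$. Uniqueness of the attractor then forces $\pi_t(\Sigma)=K_t$. I would present this second argument, since it reuses the contraction statement directly.

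No step is a real obstacle. The only point to check is that the Lipschitz bound of \cref{prop:contraction} is global on $T$, which holds by convexity of $T$.
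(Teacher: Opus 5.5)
Your proposal is correct and follows the paper's route. Both use the uniform contraction of Proposition~\ref{prop:contraction}; you phrase existence through nested cylinders $f_{\omega|_n,t}(T)$ of diameter at most $q_t^n$ where the paper uses a Cauchy estimate, and you write out the membership in $K_t$ and the surjectivity that the paper just calls standard.

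One harmless slip: for $t>0$ one has $q_t>1/2$, so with the metric $2^{-\min\{k:\omega_k\neq\omega'_k\}}$ your bound gives only H\"older continuity with exponent $\log(2-t)/\log 2<1$, not Lipschitz continuity, which is all that continuity needs anyway.
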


\begin{proof}
Fix $x_\ast\in T$. By Proposition~\ref{prop:contraction}, for every $n,p\ge0$,
\[
\bigl\|f_{\omega_1,t}\circ\cdots\circ f_{\omega_{n+p},t}(x_\ast)-f_{\omega_1,t}\circ\cdots\circ f_{\omega_n,t}(x_\ast)\bigr\|_\infty
\le q_t^n\,\diam(T),
\]
so the sequence is Cauchy and the limit exists. The same estimate shows independence of $x_\ast$ and continuity of $\pi_t$. Surjectivity is standard for attractors of contractive IFSs.
\end{proof}

\begin{definition}
Define an equivalence relation $\sim$ on $\Sigma$ by declaring that
\[
 w i j^{\infty}\sim w j i^{\infty}\qquad (i\neq j,\; w\in\{1,2,3\}^{<\infty})
\]
and taking the smallest equivalence relation generated by these pairs.
\end{definition}

\begin{theorem}[Parameter-independent address relation]\label{thm:addressrelation}
For every $t\in[0,1)$ and every $\omega,\eta\in\Sigma$ one has
\[
\pi_t(\omega)=\pi_t(\eta)\quad\Longleftrightarrow\quad \omega\sim\eta.
\]
In particular, the quotient $Q:=\Sigma/\!\sim$ is canonically homeomorphic to $K_t$ for every $t<1$.
\end{theorem}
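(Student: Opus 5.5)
We prove the two implications separately, using only the first-level geometry of \cref{prop:firstlevel}, the self-similarity $\pi_t(i\omega)=f_{i,t}(\pi_t(\omega))$ that follows from the definition in \cref{prop:coding}, and injectivity of each $f_{i,t}$ on $T$ (the matrices $M_i(t)$ are invertible, so each $f_{i,t}$ is a projective homeomorphism onto $T_i$).

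For ($\Leftarrow$), it suffices to treat a generating pair $wij^\infty\sim wji^\infty$; since $\pi_t(w\,\cdot)=f_{w,t}\circ\pi_t$, it reduces further to $\pi_t(ij^\infty)=\pi_t(ji^\infty)$. The key observation is that $f_{j,t}$ fixes the vertex $e_j$. In barycentric coordinates $M_j(t)e_j=(2-t)e_j$ plus nothing else, since column $j$ of $M_j(t)$ is $(2-t)e_j$. Thus $\pi_t(j^\infty)=e_j$, by contraction and uniqueness of the fixed point. It then remains to check $f_{i,t}(e_j)=m_{ij}=f_{j,t}(e_i)$. Column $j$ of $M_i(t)$ is $e_j+e_i$, so $f_{i,t}(e_j)=(e_i+e_j)/2$. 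Symmetrically $f_{j,t}(e_i)=m_{ij}$, and the generators are done.

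For ($\Rightarrow$), suppose $\pi_t(\omega)=\pi_t(\eta)$ with $\omega\ne\eta$, and let $w$ be the longest common prefix, so $\omega=wi\omega'$ and $\eta=wj\eta'$ with $i\ne j$. Injectivity of $f_{w,t}$ gives $\pi_t(i\omega')=\pi_t(j\eta')$. This point lies in $T_i\cap T_j=\{m_{ij}\}$, so $f_{i,t}(\pi_t(\omega'))=m_{ij}=f_{i,t}(e_j)$, and injectivity yields $\pi_t(\omega')=e_j$; likewise $\pi_t(\eta')=e_i$.

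The main step is then a lemma: \emph{$\pi_t(\omega')=e_j$ forces $\omega'=j^\infty$.} I would prove it by noting that $e_j$ lies only in $T_j$ among $T_1,T_2,T_3$, since $e_j$ has $x_j=1$ and $x_k=0<1/2$ for $k\neq j$. So the first symbol of $\omega'$ is $j$, and $\pi_t(\sigma\omega')=f_{j,t}^{-1}(e_j)=e_j$; induction finishes. Hence $\omega=wij^\infty$ and $\eta=wji^\infty$, which is a generating pair. So $\sim$ equals the fibre relation of $\pi_t$, and in fact every nontrivial class has exactly two elements.

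The quotient statement follows because $\pi_t$ is a continuous surjection from the compact space $\Sigma$ onto the Hausdorff space $K_t$. It therefore induces a continuous bijection $Q\to K_t$, which is a homeomorphism. The identification is canonical because the relation, and hence $Q$, does not depend on $t$.

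The only delicate point I foresee is the lemma on preimages of vertices and edge midpoints. It rests on the exact intersection statement $T_i\cap T_j=\{m_{ij}\}$ from \cref{prop:firstlevel}, together with the computation that the vertices are fixed points. Both are parameter-independent, which is what makes the relation independent of $t$.
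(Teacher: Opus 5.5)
Your proof is correct and follows essentially the same argument as the paper. The generators are handled via $f_{i,t}(e_j)=m_{ij}=f_{j,t}(e_i)$ and $f_{i,t}(e_i)=e_i$. The converse goes through the maximal common prefix, $T_i\cap T_j=\{m_{ij}\}$, injectivity, and the fact that $e_j$ lies only in $T_j$, which forces its unique coding to be $j^\infty$; your column computations and compact-to-Hausdorff argument for the quotient supply details the paper leaves implicit (the latter appearing in Corollary~\ref{cor:homeos}).
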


\begin{proof}
The implication $\omega\sim\eta\Rightarrow \pi_t(\omega)=\pi_t(\eta)$ is immediate from
\[
 f_{i,t}(e_j)=m_{ij}=f_{j,t}(e_i),\qquad f_{i,t}(e_i)=e_i.
\]
Indeed,
\[
\pi_t(w i j^\infty)=f_{w,t}(f_{i,t}(e_j))=f_{w,t}(m_{ij})=f_{w,t}(f_{j,t}(e_i))=\pi_t(w j i^\infty).
\]

Conversely, assume $\pi_t(\omega)=\pi_t(\eta)$ and $\omega\neq\eta$. Let $w$ be their maximal common prefix, and write
\[
\omega=w i\alpha,\qquad \eta=w j\beta,
\]
with $i\neq j$. Then
\[
\pi_t(\omega)\in f_{w,t}(T_i)\cap f_{w,t}(T_j).
\]
By Proposition~\ref{prop:firstlevel}, $T_i\cap T_j=\{m_{ij}\}$, and since $f_{w,t}$ is injective we obtain
\[
\pi_t(\omega)=\pi_t(\eta)=f_{w,t}(m_{ij}).
\]
Applying $f_{w,t}^{-1}$ and then $f_{i,t}^{-1}$ yields
\[
\pi_t(\alpha)=e_j.
\]
But $e_j\in T_j$ and $e_j\notin T_k$ for $k\neq j$, so every coding of $e_j$ must begin with $j$. Iterating this argument shows that the only coding of $e_j$ is $j^\infty$. Hence $\alpha=j^\infty$ and similarly $\beta=i^\infty$. Therefore $\omega\sim\eta$.
\end{proof}

\begin{corollary}[Canonical homeomorphisms with the Sierpi\'nski gasket]\label{cor:homeos}
For each $t\in[0,1)$ there exists a canonical homeomorphism
\[
H_t:S\to K_t
\]
obtained by matching symbolic addresses. Equivalently,
\[
H_t\bigl(\pi_0(\omega)\bigr)=\pi_t(\omega)\qquad (\omega\in\Sigma).
\]
\end{corollary}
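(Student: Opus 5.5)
The plan is to factor both $\pi_0$ and $\pi_t$ through the common quotient $Q=\Sigma/\!\sim$ and compose the resulting homeomorphisms.

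First, by \cref{prop:coding}, $\pi_0:\Sigma\to S$ and $\pi_t:\Sigma\to K_t$ are continuous surjections. By \cref{thm:addressrelation}, applied once at parameter $0$ and once at parameter $t$, both maps have exactly the same fibres, namely the $\sim$-equivalence classes. Hence each descends to a continuous bijection $\bar\pi_0:Q\to S$ and $\bar\pi_t:Q\to K_t$, where $Q$ carries the quotient topology.

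Next, $\Sigma$ is compact (Tychonoff), so $Q$ is compact as a continuous image. The targets $S$ and $K_t$ are Hausdorff as subsets of $\R^2$. A continuous bijection from a compact space to a Hausdorff space is a homeomorphism, so $\bar\pi_0$ and $\bar\pi_t$ are homeomorphisms.

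Finally, define
\[
H_t:=\bar\pi_t\circ\bar\pi_0^{-1}:S\to K_t .
\]
This is a homeomorphism. Unwinding the definitions gives $H_t(\pi_0(\omega))=\pi_t(\omega)$ for every $\omega\in\Sigma$. Well-definedness of this formula is exactly the implication $\pi_0(\omega)=\pi_0(\eta)\Rightarrow\omega\sim\eta\Rightarrow\pi_t(\omega)=\pi_t(\eta)$. The map is canonical because it is determined by the address formula, since $\pi_0$ is onto.

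I expect no step to be a real obstacle, since the substantive work is already in \cref{thm:addressrelation}. The one point to check is that the theorem really applies at $t=0$, which it does because $0\in[0,1)$. One should also note that continuity of $H_t$ does not require any metric estimate. It follows from the compact–Hausdorff argument, since $H_t^{-1}$ is obtained by the same construction with the roles of $0$ and $t$ swapped.
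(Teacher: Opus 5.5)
Your proposal is correct and follows the paper's proof essentially verbatim: by \cref{thm:addressrelation}, $\pi_0$ and $\pi_t$ descend to continuous bijections $\bar\pi_0:Q\to S$ and $\bar\pi_t:Q\to K_t$ from the compact quotient $Q=\Sigma/\!\sim$ onto Hausdorff sets, so both are homeomorphisms, and $H_t=\bar\pi_t\circ\bar\pi_0^{-1}$. Your added details (compactness of $Q$ as a continuous image of $\Sigma$, applicability of the theorem at $t=0$, and well-definedness of the address formula) only make explicit what the paper leaves implicit.
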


\begin{proof}
By Theorem~\ref{thm:addressrelation}, both $\pi_0$ and $\pi_t$ factor through the same quotient $Q=\Sigma/\!\sim$. The induced maps $\bar\pi_0:Q\to S$ and $\bar\pi_t:Q\to K_t$ are continuous bijections from a compact space to Hausdorff spaces, hence homeomorphisms. Set $H_t=\bar\pi_t\circ\bar\pi_0^{-1}$.
\end{proof}

\begin{remark}[The Rauzy endpoint]\label{rem:rauzyendpoint}
At $t=1$ the system ceases to be uniformly contractive, but the endpoint fiber is classical: by \cite[Theorem~2 and \S4]{ArnouxStarosta2013}, the unique invariant compact set is the Rauzy gasket $R$, and the corresponding coding relation is again the standard Sierpi\'nski one. We write
\[
K_1:=R,
\]
and denote by $\pi_1:\Sigma\to R$ the Arnoux--Starosta coding map, which factors through the same quotient $Q=\Sigma/\!\sim$.
\end{remark}

\subsection{Global continuity up to the Rauzy endpoint}

For a finite word $w\in\{1,2,3\}^{<\infty}$ and $t\in[0,1]$, let
\[
C_w(t):=f_{w,t}(T).
\]
For an infinite sequence $\omega\in\Sigma$ we write $\omega|_n$ for its prefix of length $n$.

\begin{lemma}[Finite cylinders vary continuously]\label{lem:finitecylinders}
For every finite word $w$, the map $t\mapsto C_w(t)$ is continuous on $[0,1]$ for the Hausdorff metric.
\end{lemma}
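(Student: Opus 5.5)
The plan is to show that the composite map $F:(t,x)\mapsto f_{w,t}(x)$ is jointly continuous on the compact set $[0,1]\times T$, and then read off Hausdorff continuity of the images $C_w(t)=F(t,T)$ from uniform continuity.

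\textbf{Step 1: each single map is jointly continuous.} Each $f_{i,t}$ is given in the chart $T$ by the explicit rational expressions \eqref{eq:f1chart}--\eqref{eq:f3chart}. Their denominators are $2-t+t(u+v)$, $2-tu$ and $2-tv$. For $(t,u,v)\in[0,1]\times T$ each of these is at least $1$:
\begin{itemize}[leftmargin=*]
\item $2-t+t(u+v)\ge 2-t\ge1$;
\item $2-tu\ge 2-t\ge 1$ because $u\le1$, and likewise $2-tv\ge1$.
\end{itemize}
So each map $(t,u,v)\mapsto f_{i,t}(u,v)$ is a rational function with nonvanishing denominator, hence continuous on $[0,1]\times T$. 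This includes the endpoint $t=1$: uniform contraction fails there, but continuity does not need it.

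By Proposition~\ref{prop:firstlevel}, each $f_{i,t}$ maps $T$ into $T$ for every $t\in[0,1]$. Hence compositions are defined on all of $T$.

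\textbf{Step 2: compositions are jointly continuous.} Induct on the length of $w$. Write $F_w(t,x)=f_{i_1,t}\bigl(f_{i_2\cdots i_n,t}(x)\bigr)$. This is the composition of $(t,x)\mapsto (t,F_{i_2\cdots i_n}(t,x))$, which maps into $[0,1]\times T$, with the jointly continuous map $F_{i_1}$ from Step 1. So $F_w$ is jointly continuous. The empty word gives the identity, and $C_\emptyset(t)=T$ is constant.

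\textbf{Step 3: Hausdorff continuity.} Since $[0,1]\times T$ is compact, $F_w$ is uniformly continuous. Given $\eps>0$, choose $\delta>0$ so that $|t-s|<\delta$ implies $\|F_w(t,x)-F_w(s,x)\|_\infty<\eps$ for all $x\in T$.

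Every point of $C_w(t)$ has the form $F_w(t,x)$, and it lies within $\eps$ of the point $F_w(s,x)\in C_w(s)$. The same holds with $t$ and $s$ swapped. Therefore the Hausdorff distance satisfies $d_H(C_w(t),C_w(s))\le\eps$.

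\textbf{Main obstacle.} The only delicate point is the endpoint $t=1$, where the contraction estimate of Proposition~\ref{prop:contraction} no longer applies. It is handled entirely by the uniform lower bound $1$ on the denominators in Step 1.

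Two further remarks:
\begin{itemize}[leftmargin=*]
\item The lemma concerns the sets $C_w(t)=f_{w,t}(T)$ directly, not attractors, so no convergence of the IFS is needed.
\item The argument actually yields equicontinuity in $t$ of the family $\{f_{w,t}\}_{x\in T}$ for each fixed $w$. It gives no uniformity in $w$, which would be a separate issue.
\end{itemize}
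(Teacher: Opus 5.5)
Your proof is correct and follows the same route as the paper: you show joint continuity of $(t,x)\mapsto f_{w,t}(x)$ on the compact set $[0,1]\times T$, deduce uniform continuity, and then obtain the Hausdorff estimate. You also make explicit two points the paper leaves implicit: the lower bound $1$ on the denominators, which covers the endpoint $t=1$, and the pointwise pairing of $F_w(t,x)$ with $F_w(s,x)$ that gives $d_H(C_w(t),C_w(s))\le\eps$.
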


\begin{proof}
The map $(x,t)\mapsto f_{w,t}(x)$ is continuous on the compact set $T\times[0,1]$. Hence it is uniformly continuous. Standard compactness then implies that the images $f_{w,t}(T)$ vary continuously in the Hausdorff metric.
\end{proof}

\begin{theorem}[Continuity of the coding map on the full parameter interval]\label{thm:globalcodingcontinuity}
Define
\[
\Pi: \Sigma\times[0,1]\to T,\qquad \Pi(\omega,t):=\pi_t(\omega),
\]
with $\pi_t$ as in Proposition~\ref{prop:coding} for $t<1$ and $\pi_1$ as in Remark~\ref{rem:rauzyendpoint}. Then $\Pi$ is continuous on the whole compact space $\Sigma\times[0,1]$.
\end{theorem}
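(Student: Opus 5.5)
The plan is to write $\Pi$ as a uniform limit of continuous functions, or more directly to use nested cylinders. The basic fact is
\[
\Pi(\omega,t)\in C_{\omega|_n}(t)\qquad\text{for all } n \text{ and all } t\in[0,1].
\]
For $t<1$ this follows from Proposition~\ref{prop:coding}, since $\pi_t(\omega)=f_{\omega|_n,t}(\pi_t(\sigma^n\omega))$. For $t=1$ it follows from the definition of the Arnoux--Starosta coding, where $\pi_1(\omega)$ is the single point of $\bigcap_n C_{\omega|_n}(1)$ \cite{ArnouxStarosta2013}. Continuity at a point $(\omega_0,t_0)$ then reduces to a uniform smallness statement: for every $\eps>0$ there exist $n$ and a neighbourhood $U$ of $t_0$ such that
\[
\diam C_{\omega_0|_n}(t)<\eps\qquad\text{for all } t\in U.
\]
Granting this, fix a point $x_\ast\in T$. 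For $\omega$ agreeing with $\omega_0$ on $n$ letters and for $t\in U$, both $\Pi(\omega,t)$ and $f_{\omega_0|_n,t}(x_\ast)$ lie in $C_{\omega_0|_n}(t)$, so
\[
\|\Pi(\omega,t)-\Pi(\omega_0,t_0)\|\le 2\eps+\|f_{\omega_0|_n,t}(x_\ast)-f_{\omega_0|_n,t_0}(x_\ast)\|.
\]
The last term tends to $0$ as $t\to t_0$, because $f_{w,t}$ is jointly continuous in $(x,t)$ for each fixed word $w$.

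For the uniform smallness, I will use Lemma~\ref{lem:finitecylinders}: $\diam C_w(t)$ is continuous in $t$ for each fixed $w$. It therefore suffices to find a single $n$ with $\diam C_{\omega_0|_n}(t_0)<\eps$, and then shrink $U$ so that the diameter stays below $\eps$ on $U$. If $t_0<1$, such an $n$ exists because $\diam C_w(t_0)\le q_{t_0}^{|w|}\diam T$ by Proposition~\ref{prop:contraction}. If $t_0=1$, I need $\diam C_{\omega_0|_n}(1)\to0$ for every $\omega_0$. This is exactly the content of the Arnoux--Starosta theorem that the Rauzy coding is well defined, namely that nested cylinders shrink to points for every sequence, including the eventually constant ones. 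I would check the constant case $\omega_0=i^\infty$ directly. Here $f_{1,1}^n(T)=\{u+v\le 1/(n+1)\}$, which shrinks to $e_1$ at rate $1/n$, and similarly for $i=2,3$. The case $\omega_0=w\,i^\infty$ follows by applying the continuous map $f_{w,1}$.

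The only delicate point, and the expected main obstacle, is $t_0=1$. There one needs the shrinking of every Rauzy cylinder $C_{\omega|_n}(1)$, that is, that the intersection is a single point for every $\omega\in\Sigma$. This is a genuinely non-uniform statement, since there is no rate uniform in $\omega$. The argument above nevertheless only needs pointwise shrinking at the fixed $\omega_0$, combined with the continuity in $t$ for a fixed finite word given by Lemma~\ref{lem:finitecylinders}. No uniformity in $\omega$ is needed: nearby $\omega$ share the prefix $\omega_0|_n$, so they lie in the same cylinder. I will cite \cite[Theorem~2 and \S4]{ArnouxStarosta2013} for the shrinking. Should their statement be only almost-everywhere, I would instead prove it directly: every infinite word either is eventually constant, which is handled above, or contains infinitely many letter changes, and products of the $M_i(1)$ along such words are eventually strictly positive with bounded distortion, which forces projective contraction in the Hilbert metric.
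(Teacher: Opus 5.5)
Your proposal is correct and follows the paper's argument: pointwise shrinking of the Rauzy cylinders $C_{\omega|_n}(1)$, taken from Arnoux--Starosta, combined with Hausdorff continuity in $t$ of a single finite cylinder (Lemma~\ref{lem:finitecylinders}); the differences are cosmetic, namely that you run the cylinder argument at every $t_0$ instead of using uniform convergence of the approximants on $[0,1-\eps]$, and you compare through the tracked point $f_{\omega_0|_n,t}(x_\ast)$ rather than through the Hausdorff distance between $C_{\omega|_n}(t)$ and $C_{\omega|_n}(1)$. One caution about your fallback sketch: it would not work as stated for words that eventually use only two letters, such as $(12)^\infty$, because products of $M_1(1)$ and $M_2(1)$ keep third row $(0,0,1)$ and so never supply strictly positive blocks; the Hilbert-metric argument then gives no contraction, and the citation (or a separate Farey-type argument along the side) is genuinely needed.
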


\begin{proof}
On $\Sigma\times[0,1-\eps]$, continuity follows from the uniform contraction argument used in Proposition~\ref{prop:coding}: the finite approximants
\[
\Pi_n(\omega,t):=f_{\omega_1,t}\circ\cdots\circ f_{\omega_n,t}(x_\ast)
\]
converge uniformly to $\Pi$ on every set of the form $\Sigma\times[0,1-\eps]$.

It remains to prove continuity at points $(\omega,1)$. Fix such a point and let $\varepsilon>0$. Since the Rauzy coding identifies the same quotient $Q=\Sigma/\!\sim$ by Remark~\ref{rem:rauzyendpoint}, the nested compact cylinders
\[
C_{\omega|_1}(1)\supset C_{\omega|_2}(1)\supset \cdots
\]
have singleton intersection $\{\pi_1(\omega)\}$. Therefore
\[
\diam C_{\omega|_n}(1)\longrightarrow 0.
\]
Choose $n$ so large that
\[
\diam C_{\omega|_n}(1)<\varepsilon/3.
\]
By Lemma~\ref{lem:finitecylinders}, there exists $\delta>0$ such that
\[
d_H\bigl(C_{\omega|_n}(t),C_{\omega|_n}(1)\bigr)<\varepsilon/3
\]
whenever $|t-1|<\delta$.

Now let $\eta\in\Sigma$ have the same prefix of length $n$ as $\omega$, and let $|t-1|<\delta$. Then
\[
\pi_t(\eta)\in C_{\eta|_n}(t)=C_{\omega|_n}(t).
\]
Hence there exists $y\in C_{\omega|_n}(1)$ such that
\[
\|\pi_t(\eta)-y\|_\infty<\varepsilon/3.
\]
Since both $y$ and $\pi_1(\omega)$ lie in $C_{\omega|_n}(1)$, we also have
\[
\|y-\pi_1(\omega)\|_\infty\le \diam C_{\omega|_n}(1)<\varepsilon/3.
\]
Therefore
\[
\|\pi_t(\eta)-\pi_1(\omega)\|_\infty<2\varepsilon/3.
\]
This proves continuity at $(\omega,1)$.
\end{proof}

\begin{corollary}[Hausdorff continuity of the fibers on the full interval]\label{cor:globalhausdorff}
The map
\[
[0,1]\to \K(T),\qquad t\mapsto K_t,
\]
is continuous for the Hausdorff metric, where $\K(T)$ denotes the space of nonempty compact subsets of $T$.
\end{corollary}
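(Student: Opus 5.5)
The plan is to deduce the corollary directly from Theorem~\ref{thm:globalcodingcontinuity}, using that each fiber is the image of the full shift under the coding map. For every $t\in[0,1]$ we have $K_t=\pi_t(\Sigma)=\Pi(\Sigma\times\{t\})$. For $t<1$ this is the surjectivity part of Proposition~\ref{prop:coding}. For $t=1$ it is the statement recorded in Remark~\ref{rem:rauzyendpoint} that $\pi_1$ codes the Rauzy gasket $R=K_1$, i.e.\ $\pi_1$ maps $\Sigma$ onto $R$.

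The key step is uniform continuity. Since $\Sigma\times[0,1]$ is compact (Tychonoff for $\Sigma$, with its usual product metric $d_\Sigma$), the continuous map $\Pi$ is uniformly continuous. So given $\eps>0$ there is $\delta>0$ such that
\[
|t-s|<\delta \implies \|\Pi(\omega,t)-\Pi(\omega,s)\|_\infty<\eps \quad\text{for all }\omega\in\Sigma .
\]
Here we take the pair $(\omega,t),(\omega,s)$, whose distance in the product metric is just $|t-s|$.

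Now fix $t,s$ with $|t-s|<\delta$. Every point of $K_t$ has the form $\pi_t(\omega)$, and it lies within $\eps$ of $\pi_s(\omega)\in K_s$. Symmetrically, every point of $K_s$ lies within $\eps$ of a point of $K_t$. Hence $d_H(K_t,K_s)\le\eps$, which proves continuity of $t\mapsto K_t$ in $\K(T)$. Each $K_t$ is nonempty and compact as a continuous image of the compact space $\Sigma$, so the map indeed takes values in $\K(T)$.

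The only step needing care is the endpoint identification $\pi_1(\Sigma)=R$. I would take this from the cited Arnoux--Starosta description: the nested cylinders $C_{\omega|_n}(1)$ shrink to points, and their intersections exhaust the invariant compact set $R$. All the analytic difficulty at $t=1$ has already been absorbed into Theorem~\ref{thm:globalcodingcontinuity}, so the corollary itself is a formal consequence.
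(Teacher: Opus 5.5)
Your proof is correct and is essentially the paper's argument: both use uniform continuity of the coding map on a compact parameter space, together with $K_t=\pi_t(\Sigma)$, to bound $d_H(K_s,K_t)$ by a supremum of $\|\Pi(\cdot,s)-\Pi(\cdot,t)\|_\infty$. The only difference is that the paper first descends $\Pi$ to $Q\times[0,1]$ using Theorem~\ref{thm:addressrelation} and Remark~\ref{rem:rauzyendpoint}; your version shows this step is unnecessary, since working directly on $\Sigma\times[0,1]$ already gives the estimate.
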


\begin{proof}
By Theorem~\ref{thm:addressrelation}, Remark~\ref{rem:rauzyendpoint}, and Theorem~\ref{thm:globalcodingcontinuity}, the map $\Pi$ is continuous and constant on the equivalence classes of $\sim$ for every $t\in[0,1]$. It therefore descends to a continuous map
\[
\bar\Pi:Q\times[0,1]\to T,\qquad \bar\Pi(q,t)=\bar\pi_t(q),
\]
whose image at time $t$ is precisely $K_t$. Since $Q\times[0,1]$ is compact, $\bar\Pi$ is uniformly continuous, so
\[
d_H(K_s,K_t)\le \sup_{q\in Q}\|\bar\Pi(q,s)-\bar\Pi(q,t)\|_\infty\xrightarrow[s\to t]{}0.
\]
\end{proof}

\subsection{Constant-address cells and the parabolic Rauzy limit}

The interpolation becomes especially transparent along constant symbolic rays.

\begin{proposition}[Closed formula for constant-address iterates]\label{prop:constantaddress}
Fix $t\in[0,1]$ and let $a:=2-t$. For every $n\ge1$ and every $(u,v)\in T$,
\[
 f_{1,t}^{\,n}(u,v)=\lambda_{n,t}(u+v)\,(u,v),
\]
where, for $s\in[0,1]$,
\[
\lambda_{n,t}(s)=
\begin{cases}
\displaystyle \frac{1}{a^n+t\,\frac{a^n-1}{1-t}\,s},& 0\le t<1,\\[2ex]
\displaystyle \frac{1}{1+ns},& t=1.
\end{cases}
\]
In particular,
\[
 f_{1,t}^{\,n}(T)=\{u+v\le \sigma_n(t)\},
\]
with
\[
\sigma_n(t)=
\begin{cases}
\displaystyle \frac{1-t}{(2-t)^n-t},& 0\le t<1,\\[2ex]
\displaystyle \frac{1}{n+1},& t=1.
\end{cases}
\]
The analogous formulas hold for $f_{2,t}^{\,n}$ and $f_{3,t}^{\,n}$ by symmetry.
\end{proposition}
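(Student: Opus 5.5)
The plan is to reduce everything to a one-dimensional Möbius recursion in the variable $s=u+v$, exactly as in the proof of Proposition~\ref{prop:firstlevel}. By \eqref{eq:f1chart}, $f_{1,t}(u,v)=\phi_t(s)^{-1}(u,v)$ with $\phi_t(s)=a+ts$, where $a=2-t$. The ray direction $v/u$ is preserved, so every iterate has the form $f_{1,t}^{\,n}(u,v)=\lambda_n(s)(u,v)$ for some scalar function $\lambda_n$. The new sum is $s_1=s/(a+ts)$. I will prove the claimed formula for $\lambda_{n,t}$ by induction on $n$.

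Cleanest route: work with the matrix $M_1(t)$ restricted to the invariant pair (sum of last two coordinates, total). In projective terms, $s\mapsto s/(a+ts)$ is the Möbius map with matrix $A=\begin{pmatrix}1&0\\ t&a\end{pmatrix}$. The $n$-th power of this lower-triangular matrix is
\[
A^n=\begin{pmatrix}1&0\\ t(1+a+\cdots+a^{n-1})&a^n\end{pmatrix}.
\]
For $t<1$ the geometric sum equals $(a^n-1)/(a-1)=(a^n-1)/(1-t)$. For $t=1$ we have $a=1$ and the sum is $n$. Hence $s_n=s/(a^n+c_n s)$ with $c_n=t\frac{a^n-1}{1-t}$, or $c_n=n$ when $t=1$.

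To get $\lambda_n$ rather than just $s_n$, note $\lambda_n(s)=s_n/s$ for $s>0$, since the point is scaled along its ray. This gives $\lambda_n(s)=1/(a^n+c_ns)$. The case $s=0$, the origin, is fixed, and the formula gives $\lambda_n(0)=a^{-n}$, which multiplies the zero vector harmlessly. Alternatively I can verify the induction directly: $\lambda_{n+1}(s)=\lambda_1(s)\,\lambda_n(s_1)$, and substituting reproduces the recursion $c_{n+1}=t+a c_n$, $a^{n+1}=a\cdot a^n$.

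For the image, $s\mapsto s/(a^n+c_ns)$ is increasing on $[0,1]$ since $a^n>0$. It therefore maps $[0,1]$ onto $[0,\,1/(a^n+c_n)]$. Each ray from the origin is preserved, so the argument of Proposition~\ref{prop:firstlevel} gives $f_{1,t}^{\,n}(T)=\{u+v\le 1/(a^n+c_n)\}$. Simplifying,
\[
a^n+t\frac{a^n-1}{1-t}=\frac{a^n(1-t)+ta^n-t}{1-t}=\frac{a^n-t}{1-t},
\]
so $\sigma_n(t)=(1-t)/((2-t)^n-t)$. When $t=1$ we get $1/(n+1)$. The maps $f_{2,t}$ and $f_{3,t}$ are conjugate to $f_{1,t}$ by the affine symmetries of $T$ permuting the vertices, which are the coordinate permutations in $\D$ conjugating the matrices $M_i(t)$. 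So the analogous formulas transfer.

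The only mild obstacle is bookkeeping: keeping the $t=1$ case (where $a-1=0$) separate in the geometric sum. This is handled by stating the sum as $1+a+\dots+a^{n-1}$ before dividing.
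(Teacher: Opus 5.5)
Your proposal is correct and is essentially the paper's argument in matrix form. The paper writes $\mu_n=1/\lambda_n$ and solves the affine recurrence $\mu_{n+1}(s)=a\mu_n(s)+ts$, which is exactly the bottom-row recursion of your power $A^n$. Your explicit simplification $a^n+t\frac{a^n-1}{1-t}=\frac{a^n-t}{1-t}$, and your direct treatment of $t=1$ via $1+a+\cdots+a^{n-1}=n$, fill in steps the paper leaves implicit (it calls $t=1$ ``the limit $a\to1$'').

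One small slip: expanding $\lambda_{n+1}(s)=\lambda_1(s)\lambda_n(s_1)$ actually gives $c_{n+1}=c_n+ta^n$. The recursion $c_{n+1}=t+ac_n$ comes from composing in the other order, $f_{1,t}\circ f_{1,t}^{\,n}$. The closed form satisfies both, so the induction still closes.
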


\begin{proof}
Let $s=u+v$. Since \eqref{eq:f1chart} preserves every ray from the origin,
\[
 f_{1,t}(u,v)=\frac{1}{a+ts}(u,v).
\]
Hence $f_{1,t}^{\,n}(u,v)=\lambda_n(s)(u,v)$ for some scalar factor $\lambda_n(s)$. Writing $\mu_n(s):=\lambda_n(s)^{-1}$, the recursion becomes
\[
\mu_{n+1}(s)=a\,\mu_n(s)+ts,\qquad \mu_1(s)=a+ts.
\]
Solving this affine recurrence yields
\[
\mu_n(s)=a^n+t(1+a+\cdots+a^{n-1})s=a^n+t\frac{a^n-1}{a-1}s,
\]
which is the stated formula for $t<1$. The case $t=1$ is the limit $a\to1$ and gives $\mu_n(s)=1+ns$.

Finally, $f_{1,t}^{\,n}(T)$ consists exactly of those points on rays from the origin whose radial parameter $s$ is at most the image of $s=1$, namely $\sigma_n(t)$.
\end{proof}

\begin{corollary}[Exponential versus polynomial collapse]\label{cor:collapse}
For each fixed $t<1$ one has $\sigma_n(t)\asymp (2-t)^{-n}$ as $n\to\infty$, whereas
\[
\sigma_n(1)=\frac{1}{n+1}.
\]
Thus the repeated-cell collapse is exponential on every contracting fiber and polynomial at the Rauzy endpoint.
\end{corollary}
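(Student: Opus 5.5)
The plan is to read everything off the closed formula of \cref{prop:constantaddress}. There
\[
\sigma_n(t)=\frac{1-t}{(2-t)^n-t}\quad(0\le t<1),\qquad \sigma_n(1)=\frac{1}{n+1}.
\]
The endpoint value $\sigma_n(1)=1/(n+1)$ is then literally part of that proposition, so nothing needs to be proved for it beyond quoting it.

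For fixed $t\in[0,1)$, set $a=2-t$. Then $a>1$, so $a^n\to\infty$. Write
\[
\sigma_n(t)\,a^n=\frac{1-t}{1-t\,a^{-n}}.
\]
The right-hand side tends to $1-t$, which is strictly positive. To make the $\asymp$ claim uniform in $n$ (and not only asymptotic), note that $0\le t\,a^{-n}\le t/a<1$ for all $n\ge1$. This gives the two-sided bound
\[
(1-t)\,a^{-n}\le \sigma_n(t)\le \frac{1-t}{1-t/(2-t)}\,a^{-n}=\frac{2-t}{2}\,a^{-n}.
\]
Hence $\sigma_n(t)\asymp(2-t)^{-n}$ with constants depending only on $t$. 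This is exponential decay at rate $\log(2-t)>0$.

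For the interpretive sentence, I would recall that by \cref{prop:constantaddress} the repeated cell $C_{1^n}(t)=f_{1,t}^{\,n}(T)$ is the corner triangle $\{u+v\le\sigma_n(t)\}$. Its $\|\cdot\|_\infty$-diameter is therefore exactly $\sigma_n(t)$, so the decay of $\sigma_n$ is precisely the collapse rate of the cell. The same holds for the cells $C_{2^n}(t)$ and $C_{3^n}(t)$ by the stated symmetry.

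There is no real obstacle. The only point needing care is that the constants degenerate as $t\to1$: the lower constant $1-t$ vanishes. I would remark that this is consistent with the transition to the polynomial rate $1/(n+1)$ at $t=1$, rather than attempting any uniformity in $t$.
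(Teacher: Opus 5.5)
Your proposal is correct and takes the paper's route: the paper gives no separate proof and treats the corollary as immediate from the closed formula for $\sigma_n(t)$ in \cref{prop:constantaddress}, which is what you do. Your explicit bounds $(1-t)(2-t)^{-n}\le\sigma_n(t)\le\frac{2-t}{2}(2-t)^{-n}$ hold for every $n\ge1$, and you correctly identify $\sigma_n(t)$ as the $\|\cdot\|_\infty$-diameter of $C_{1^n}(t)$; both make the argument slightly sharper than the statement requires.
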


This dichotomy is one of the simplest exact signatures of the passage from the affine Sierpi\'nski end to the parabolic Rauzy end; see \cref{fig:scaling}.

\begin{figure}[t]
    \centering
    \includegraphics[width=0.72\textwidth]{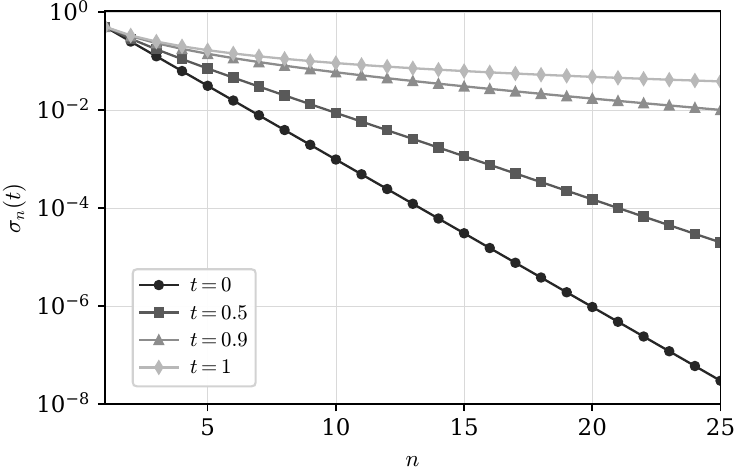}
    \caption{The explicit formula for $\sigma_n(t)$ shows exponential decay for every $t<1$ and polynomial decay at $t=1$.}
    \label{fig:scaling}
\end{figure}

\subsection{The three-dimensional stack}

To assemble the family into a single closed subset of $\R^3$, we place the fiber $K_t$ in the horizontal plane of height
\[
Y=2(1-t),
\]
using the three-dimensional equilateral-triangle embedding
\[
E_t(x_1,x_2,x_3)=\left(\frac{\sqrt 3}{2}(x_3-x_2),\; 2(1-t),\; \frac32 x_1-\frac12\right)\in\R^3.
\]
The bottom face $Y=0$ is the Rauzy fiber $R$, while the top face $Y=2$ is the Sierpi\'nski gasket.

\begin{theorem}[The stack is a product]\label{thm:closedstack}
The set
\[
\mathfrak S:=\{E_t(x):x\in K_t,\ t\in[0,1]\}\subset\R^3
\]
is homeomorphic to $S\times[0,1]$.
\end{theorem}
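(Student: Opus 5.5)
The plan is to build an explicit continuous bijection from the compact space $Q\times[0,1]$ onto $\mathfrak S$, and then compose with the homeomorphism $Q\cong S$ from \cref{cor:homeos} (more precisely, $\bar\pi_0:Q\to S$ from its proof). The natural candidate uses the descended coding map $\bar\Pi:Q\times[0,1]\to T$ from the proof of \cref{cor:globalhausdorff}. Define
\[
\Phi:Q\times[0,1]\to\R^3,\qquad \Phi(q,t):=E_t\bigl(\iota(\bar\Pi(q,t))\bigr).
\]
Then the map $\Psi:=\Phi\circ(\bar\pi_0^{-1}\times\mathrm{id}):S\times[0,1]\to\mathfrak S$ is the desired homeomorphism, once we know $\Phi$ is a continuous bijection onto $\mathfrak S$.

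\textbf{Continuity.} By \cref{thm:globalcodingcontinuity}, $\Pi$ is continuous on $\Sigma\times[0,1]$. It is constant on $\sim$-classes for every $t$, by \cref{thm:addressrelation} when $t<1$ and by \cref{rem:rauzyendpoint} when $t=1$. Since $\Sigma\times[0,1]\to Q\times[0,1]$ is a quotient map (a product of a quotient map of compact Hausdorff spaces with the identity on a compact interval is closed, hence a quotient map), $\bar\Pi$ is continuous. The map $(x,t)\mapsto E_t(x)$ is affine in $x$ and affine in $t$, so it is jointly continuous. Hence $\Phi$ is continuous.

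\textbf{Surjectivity.} For fixed $t$, surjectivity of $\pi_t$ (\cref{prop:coding} for $t<1$, Arnoux--Starosta for $t=1$) gives $\Phi(Q\times\{t\})=E_t(\iota(K_t))$. Taking the union over $t$ gives exactly $\mathfrak S$.

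\textbf{Injectivity.} This is the step where the structure really matters, though it is short. The second coordinate of $E_t(x)$ is $2(1-t)$, so $\Phi(q,t)=\Phi(q',t')$ forces $t=t'$. For fixed $t$, the map $x\mapsto E_t(x)$ is injective on the affine plane $\{x_1+x_2+x_3=1\}$. Indeed, from $x_3-x_2$ and $x_1$ together with the constraint $x_1+x_2+x_3=1$ one recovers $x$. Hence $\bar\Pi(q,t)=\bar\Pi(q',t)$, and injectivity of $\bar\pi_t$ gives $q=q'$. This fiberwise injectivity is exactly the content of \cref{thm:addressrelation} for $t<1$ and \cref{rem:rauzyendpoint} at $t=1$. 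The endpoint $t=1$ is the only delicate input, and it rests on the cited identification of the Rauzy quotient with $Q$.

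\textbf{Conclusion.} $Q\times[0,1]$ is compact and $\mathfrak S\subset\R^3$ is Hausdorff, so the continuous bijection $\Phi$ is a homeomorphism. As a byproduct, $\mathfrak S$ is compact, hence closed in $\R^3$. Composing with $\bar\pi_0^{-1}\times\mathrm{id}$ identifies $\mathfrak S$ with $S\times[0,1]$. Under this identification the slice $\{Y=2(1-t)\}$ corresponds to $S\times\{t\}$, and the map on each slice is the canonical $H_t$ of \cref{cor:homeos}, extended to $t=1$ via $\pi_1$.

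The main obstacle is continuity at the Rauzy endpoint, where uniform contraction fails. This is already handled by \cref{thm:globalcodingcontinuity}, so the proof reduces to bookkeeping.
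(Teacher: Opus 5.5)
Your proposal is correct and follows essentially the same route as the paper. You descend the coding map to a continuous $\bar\Pi$ on $Q\times[0,1]$, compose with $E_t$, separate parameters by the height coordinate $2(1-t)$, obtain fiberwise injectivity from the common quotient $Q$ (at $t=1$ via the cited Arnoux--Starosta identification), and conclude by the compact-to-Hausdorff argument; your extra bookkeeping (the quotient-map justification for continuity of $\bar\Pi$, the explicit injectivity of $E_t$ on the plane $x_1+x_2+x_3=1$, and the insertion of the chart $\iota$) fills in points the paper leaves implicit.
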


\begin{proof}
Let $Q=\Sigma/\!\sim$, which is homeomorphic to $S$ by Corollary~\ref{cor:homeos}. By Theorem~\ref{thm:globalcodingcontinuity}, the coding map descends to a continuous map
\[
\bar\Pi:Q\times[0,1]\to T,\qquad \bar\Pi(q,t)=\bar\pi_t(q).
\]
Composing with $E_t$ gives
\[
\Psi:Q\times[0,1]\to \mathfrak S,\qquad \Psi(q,t)=E_t\bigl(\bar\pi_t(q)\bigr).
\]
This map is continuous. It is injective because the second coordinate of $E_t(x)$ is exactly $2(1-t)$, so different parameters $t$ land in different horizontal planes; within a fixed plane, injectivity follows from the fact that the coding relation is the same quotient $Q$ for every $t\in[0,1]$. It is surjective by construction. Since $Q\times[0,1]$ is compact and $\mathfrak S\subset\R^3$ is Hausdorff, $\Psi$ is a homeomorphism. Finally $Q\cong S$.
\end{proof}

The theorem records the topological content of the three-dimensional stack itself: the embedding $E_t$ realizes a genuine compact set homeomorphic to the product of the Sierpi\'nski gasket with an interval, not merely a collection of unrelated planar slices.

\subsection{Hausdorff dimension}

This section identifies the exact Hausdorff dimension of every affine-projective intermediate fiber. The endpoint case $t=1$ is covered by Jurga's theorem for the Rauzy gasket \cite{Jurga2026}; here we show that every $0<t<1$ falls into the same positive-semigroup framework, and that the ambient truncation at dimension $2$ never occurs.

For $t\in(0,1)$ write
\[
 a:=2-t\in(1,2)
\]
and define
\[
A_i(t):=a^{-1/3}M_i(t)\in SL_3(\R),\qquad \Gamma_t:=\langle A_1(t),A_2(t),A_3(t)\rangle.
\]
The projective action of $A_i(t)$ on $\D$ is exactly $f_{i,t}$, since multiplying by a positive scalar does not change projective coordinates. We write $s_A(\Gamma_t)$ for the projective affinity dimension of $\Gamma_t$ in the sense of Jurga \cite{Jurga2026}. Thus, if $\alpha_1(g)\ge \alpha_2(g)\ge \alpha_3(g)$ denote the singular values of $g\in SL_3(\R)$, then for $0\le s\le 2$
\[
\varphi^s(g):=
\begin{cases}
(\alpha_2(g)/\alpha_1(g))^s,& 0\le s\le 1,\\[0.6ex]
(\alpha_2(g)/\alpha_1(g))(\alpha_3(g)/\alpha_1(g))^{s-1},& 1\le s\le 2.
\end{cases}
\]
For a word $w=i_1\cdots i_n$ we write $A_w(t):=A_{i_1}(t)\cdots A_{i_n}(t)$, and we may characterize $s_A(\Gamma_t)$ as the critical exponent of the word series
\[
\sum_{n\ge 1}\ \sum_{|w|=n}\varphi^s(A_w(t)).
\]
This is the formulation used below to verify directly that the ambient cutoff at $2$ never occurs.

\begin{proposition}[Explicit positive conjugation]\label{prop:positiveconj}
Fix $t\in(0,1)$ and write $a=2-t$. For every $c\in(1/a,1)$ sufficiently close to $1/a$, the matrix
\[
P_{a,c}:=\begin{pmatrix}
1&-c&-c\\
-c&1&-c\\
-c&-c&1
\end{pmatrix}
\]
has the property that each conjugate
\[
B_i(t,c):=P_{a,c}^{-1}A_i(t)P_{a,c}
\]
has strictly positive entries.
\end{proposition}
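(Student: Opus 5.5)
The plan is to exploit the rank-one structure of $M_i(t)-I$ together with the $S_3$-symmetry of $P_{a,c}$, so that the computation reduces to the sign of one explicit outer product. Since the scalar factor $a^{-1/3}$ does not affect signs, it suffices to show that $P_{a,c}^{-1}M_i(t)P_{a,c}$ has strictly positive entries.

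\textbf{Reduction to $i=1$.} Write $P_{a,c}=(1+c)I-cJ$, where $J=\mathbf 1\mathbf 1^\top$. This matrix commutes with every permutation matrix $\Pi_\sigma$. The three matrices $M_i(t)$ are conjugate by coordinate permutations: $M_{\sigma(1)}(t)=\Pi_\sigma M_1(t)\Pi_\sigma^{-1}$. Hence $B_{\sigma(1)}=\Pi_\sigma B_1\Pi_\sigma^{-1}$, and positivity is preserved under such conjugation. So it is enough to treat $i=1$.

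\textbf{Invertibility and inverse of $P_{a,c}$.} Using $J^2=3J$, one checks that
\[
P_{a,c}^{-1}=\frac{1}{1+c}\Bigl(I+\kappa J\Bigr),\qquad \kappa:=\frac{c}{1-2c}.
\]
This is valid whenever $c\neq 1/2$ and $c\neq -1$; indeed $\det P_{a,c}=(1+c)^2(1-2c)$. Since $a\in(1,2)$, we have $1/a\in(1/2,1)$, so every $c\in(1/a,1)$ satisfies $c>1/2$. In particular $1-2c<0$ and $\kappa<0$.

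\textbf{Rank-one decomposition.} Write $M_1(t)=I+e_1r$ with the row vector $r=(a-1,1,1)$. Then
\[
B_1=P_{a,c}^{-1}M_1P_{a,c}=I+\bigl(P_{a,c}^{-1}e_1\bigr)\bigl(rP_{a,c}\bigr).
\]
I compute the two factors and their signs.

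\begin{itemize}
\item The column vector is $P_{a,c}^{-1}e_1=\frac{1}{1+c}\bigl(1+\kappa,\kappa,\kappa\bigr)^\top$. Here $1+\kappa=\frac{1-c}{1-2c}<0$ because $1/2<c<1$, and $\kappa<0$. So all three entries are strictly negative.
\item The row vector is $rP_{a,c}=\bigl(a-1-2c,\ 1-ca,\ 1-ca\bigr)$. The first entry is negative because $a<2$ and $c>1/2$ give $a-1<1<2c$. The other two are negative exactly because $c>1/a$.
\end{itemize}

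Therefore the outer product has all nine entries strictly positive. Adding $I$ keeps every entry strictly positive: the off-diagonal entries are unchanged positive numbers, and the diagonal entries exceed $1$. This proves the claim for every $c\in(1/a,1)$, and in particular for $c$ close to $1/a$.

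\textbf{Main obstacle.} The only delicate point is the sign bookkeeping. The condition $c>1/a$ is what makes the entries $1-ca$ negative, while $c>1/2$ (automatic, since $a<2$) flips the sign of $\kappa$ and of $1+\kappa$. I would double-check the formula for the inverse directly via $J^2=3J$.
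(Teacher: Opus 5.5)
Your proof is correct, and it takes a different route from the paper. The paper computes the conjugate $P_{a,c}^{-1}M_1(t)P_{a,c}$ entry by entry, with common denominator $2c^2+c-1$. It then evaluates this matrix at the boundary value $c=1/a$, where it degenerates to the nonnegative lower-triangular matrix with rows $(a,0,0)$, $(1,1,0)$, $(1,0,1)$. Finally it argues by continuity: the entries already positive at $c=1/a$ stay positive nearby, and the three zero entries carry the factor $ac-1$, so they become positive for $c>1/a$.

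You instead write $P_{a,c}=(1+c)I-cJ$ and $M_1(t)=I+e_1r$. Then $B_1$ is $I$ plus the outer product of $P_{a,c}^{-1}e_1$ and $rP_{a,c}$, and positivity reduces to checking that one column and one row are entrywise negative. Your factors agree with the paper's matrix: $P_{a,c}^{-1}e_1=(2c^2+c-1)^{-1}(c-1,-c,-c)^\top$, and expanding $I+(P_{a,c}^{-1}e_1)(rP_{a,c})$ reproduces the paper's entries.

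Your route buys several things. It avoids the nine-entry computation and the continuity step. It makes the permutation symmetry explicit, through the fact that $P_{a,c}$ commutes with permutation matrices; the paper invokes this only as ``cyclic permutation of the coordinates.'' It also gives a stronger conclusion: positivity holds for every $c\in(1/a,1)$, so the qualifier ``sufficiently close to $1/a$'' is unnecessary.

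The paper's computation, in turn, exhibits the degenerate boundary matrix. In your notation, this corresponds to the two entries $1-ac$ of $rP_{a,c}$ vanishing at $c=1/a$. That makes visible why $c=1/a$ is exactly the threshold at which strict positivity begins.
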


\begin{proof}
Since $A_i(t)=a^{-1/3}M_i(t)$, it is enough to prove positivity for $P_{a,c}^{-1}M_i(t)P_{a,c}$. A direct calculation gives
\[
P_{a,c}^{-1}M_1(t)P_{a,c}
=
\frac{1}{2c^2+c-1}
\begin{pmatrix}
c(a+2)-a & (1-c)(ac-1) & (1-c)(ac-1)\\
c(2c+1-a) & c^2(a+2)-1 & c(ac-1)\\
c(2c+1-a) & c(ac-1) & c^2(a+2)-1
\end{pmatrix},
\]
and the formulas for $M_2(t)$ and $M_3(t)$ are obtained by cyclic permutation of the coordinates. At the boundary value $c=1/a$ this becomes
\[
\begin{pmatrix}
a&0&0\\
1&1&0\\
1&0&1
\end{pmatrix},
\]
while the other two conjugates become its cyclic permutations. Since
\[
2(1/a)^2+1/a-1=\frac{a+2-a^2}{a^2}>0
\]
for $1<a<2$, the denominator stays positive near $1/a$. The entries that are already positive at $c=1/a$ remain positive by continuity, and the zero entries are exactly the ones carrying the factor $ac-1$, so they become positive as soon as $c>1/a$. Hence every conjugate is strictly positive for $c>1/a$ close enough to $1/a$. The same then holds for $B_i(t,c)=a^{-1/3}P_{a,c}^{-1}M_i(t)P_{a,c}$.
\end{proof}

\begin{lemma}[Strong irreducibility]\label{lem:strongirr}
For every $t\in(0,1)$, the semigroup $\Gamma_t$ is strongly irreducible.
\end{lemma}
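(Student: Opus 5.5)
The plan is to verify strong irreducibility directly from the eigenstructure of the generators. Suppose, for contradiction, that $W=V_1\cup\cdots\cup V_m$ is a finite union of proper nonzero subspaces of $\R^3$ with $A_i(t)W\subseteq W$ for $i=1,2,3$. Since the scalar $a^{-1/3}$ is irrelevant, we may work with $M_i(t)$ throughout.

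\emph{Step 1: reduce to a single invariant subspace.} Each $M_i(t)$ is invertible, and we may take the $V_j$ pairwise non-nested. A linear subspace contained in a finite union of subspaces lies in one of them. Hence $M_i(t)$ maps every $V_j$ into some $V_{j'}$ with $\dim V_{j'}\ge\dim V_j$. Apply this to the components of maximal dimension $d$. Their images have dimension $d$ and lie in components of dimension $\ge d$, hence of dimension exactly $d$. So the image of a maximal component is contained in, and therefore equal to, another maximal component. By injectivity, $M_i(t)$ permutes the maximal components. Consequently there is a common $k\ge1$ (for instance $k=m!$) such that one fixed maximal component $V$ is invariant under $M_1(t)^k$, $M_2(t)^k$ and $M_3(t)^k$. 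Here $V$ is either a line or a plane.

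\emph{Step 2: eigenstructure.} $M_1(t)$ is diagonalizable, with eigenvalue $a=2-t$ on $\R e_1$ and eigenvalue $1$ on the plane
\[
P_1=\{(a-1)x_1+x_2+x_3=0\}.
\]
Since $a^k\neq1$, the power $M_1(t)^k$ has the same eigenspaces. Its invariant lines are therefore $\R e_1$ and the lines inside $P_1$. For $M_2,M_3$ we get the cyclic analogues: $e_2$ with $P_2=\{x_1+(a-1)x_2+x_3=0\}$, and $e_3$ with $P_3$.

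\emph{Step 3: exclude lines.} An invariant line $L$ must, for each $i$, equal $\R e_i$ or lie in $P_i$.
\begin{itemize}
\item $L=\R e_1$ is impossible, because $e_1\notin P_2$ and $e_1\ne\R e_2$ as lines. The same argument excludes $\R e_2$ and $\R e_3$.
\item Otherwise $L\subseteq P_1\cap P_2\cap P_3$. The coefficient matrix has $a-1$ on the diagonal and $1$ off it, so its determinant is $(a+1)(a-2)^2\neq0$ for $a\in(1,2)$. Hence the intersection is $\{0\}$, a contradiction.
\end{itemize}

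\emph{Step 4: exclude planes.} This is the main bookkeeping point. An invariant plane $V$ for $M_i^k$ corresponds to an invariant line $V^\perp$ for $(M_i^\top)^k$, so we repeat Step 3 for the transposes. $M_1(t)^\top$ has eigenvector $(a-1,1,1)$ for the eigenvalue $a$, and eigenspace $\{x_1=0\}$ for the eigenvalue $1$; the others are cyclic. If $V^\perp$ is spanned by $(a-1,1,1)$, it does not lie in $\{x_2=0\}$. It is also not the $a$-eigenline $(1,a-1,1)$ of $M_2^\top$, since $a\neq2$. So $V^\perp$ would have to lie in $\{x_1=0\}\cap\{x_2=0\}\cap\{x_3=0\}=\{0\}$, a contradiction.

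Thus no such $W$ exists, and $\Gamma_t$ is strongly irreducible.
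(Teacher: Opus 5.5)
Your argument is correct, but it takes a different route from the paper's.

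\textbf{The paper's route.} The paper never passes to powers and never enumerates invariant subspaces. Instead it uses the rank-one limits $a^{-n}M_i(t)^n\to L_i$, whose nonzero rows are $(1,b,b)$, $(b,1,b)$, $(b,b,1)$ with $b=(a-1)^{-1}$. Because these rows are independent, every line $[x]$ is attracted to some coordinate line $[e_i]$ under $M_i(t)^n$. Finiteness of an invariant set $F$ of lines then forces $[e_i]\in F$, then all three coordinate lines, and finally the orbit of $[e_1+e_2]$ under $M_3(t)^n$ is infinite, a contradiction. Planes are handled by running the same argument for the transposes.

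\textbf{Same key fact, used differently.} The row $(1,b,b)$ is a multiple of your normal $(a-1,1,1)$. So the paper's determinant $(1-b)^2(1+2b)\neq0$ is your $(a+1)(a-2)^2\neq0$ in another form. The paper uses it dynamically: some $x\notin P_i$, hence $[x]$ is attracted to $[e_i]$. You use it statically: $P_1\cap P_2\cap P_3=\{0\}$.

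\textbf{What your version buys.} Your Step 1 makes explicit the reduction from an arbitrary finite union, possibly mixing lines and planes, to a single subspace invariant under the powers $M_i(t)^k$. The paper leaves this reduction implicit, since it treats only unions of lines and unions of planes. Your computation also shows exactly why $t=0$ must be excluded: $a=2$ gives $P_1=P_2=P_3=\{x_1+x_2+x_3=0\}$.

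\textbf{What the paper's version buys.} Its attracting-line argument needs only that some normalization of $M_i(t)^n$ converges to a rank-one limit. It therefore adapts, with $n^{-1}$ in place of $a^{-n}$, to the unipotent endpoint $t=1$. Your diagonalization step does not carry over verbatim there.
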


\begin{proof}
Scalar normalization does not change invariant subspaces, so it is enough to work with
\[
S_t:=\langle M_1(t),M_2(t),M_3(t)\rangle.
\]
Set $a=2-t\in(1,2)$ and $b=(a-1)^{-1}$. Then
\begin{align*}
a^{-n}M_1(t)^n &\longrightarrow
L_1=
\begin{pmatrix}
1&b&b\\
0&0&0\\
0&0&0
\end{pmatrix},\\
a^{-n}M_2(t)^n &\longrightarrow
L_2=
\begin{pmatrix}
0&0&0\\
b&1&b\\
0&0&0
\end{pmatrix},\\
a^{-n}M_3(t)^n &\longrightarrow
L_3=
\begin{pmatrix}
0&0&0\\
0&0&0\\
b&b&1
\end{pmatrix}.
\end{align*}
The row vectors $(1,b,b)$, $(b,1,b)$, and $(b,b,1)$ are linearly independent because their determinant is $(1-b)^2(1+2b)\neq0$. Hence for every nonzero $x\in\R^3$, at least one of $L_1x,L_2x,L_3x$ is nonzero.

Assume that $F$ is a finite $S_t$-invariant set of projective lines. Choose $\ell=[x]\in F$ with $x\neq0$. For some $i$, one has $L_ix\neq0$. Then $M_i(t)^n\ell\in F$ for all $n$, and
\[
[M_i(t)^n x]\longrightarrow [L_i x]=[e_i].
\]
Because $F$ is finite, some infinite subsequence $M_i(t)^{n_k}\ell$ is constant; its limit therefore belongs to $F$. Thus one coordinate line lies in $F$. Applying suitable powers of the other generators shows that all three coordinate lines belong to $F$; for instance,
\[
M_2(t)^n e_1 = e_1+\frac{a^n-1}{a-1}e_2 \longrightarrow [e_2],
\qquad
M_3(t)^n e_1 = e_1+\frac{a^n-1}{a-1}e_3 \longrightarrow [e_3].
\]
Now $[e_1+e_2]=M_2(t)[e_1]\in F$, but
\[
M_3(t)^n(e_1+e_2)=e_1+e_2+2\frac{a^n-1}{a-1}e_3
\]
yields infinitely many distinct projective lines, contradicting finiteness of $F$. So no finite invariant set of lines exists.

The same argument applied to the transpose semigroup $S_t^\top$ excludes finite invariant unions of planes, since such unions correspond by orthogonal complement to finite invariant unions of lines for $S_t^\top$. Therefore $S_t$, and hence $\Gamma_t$, is strongly irreducible.
\end{proof}

\begin{lemma}[Absence of invariant quadratic forms]\label{lem:noquad}
For $t\in(0,1)$ there is no nonzero symmetric matrix $Q$ such that
\[
A_i(t)^\top Q A_i(t)=Q \qquad (i=1,2,3).
\]
\end{lemma}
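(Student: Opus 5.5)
The plan is to show that a single generator already forces $Q=0$. We will use the diagonalizability of $M_1(t)$ together with the fact that an invariant symmetric bilinear form pairs eigenvectors only when their eigenvalues multiply to $1$. Set $a=2-t\in(1,2)$, so that $A_1(t)=a^{-1/3}M_1(t)$. The condition $A_1(t)^\top QA_1(t)=Q$ is then equivalent to
\[
M_1(t)^\top Q\,M_1(t)=a^{2/3}Q .
\]
Let $\beta(x,y):=x^\top Qy$ be the symmetric bilinear form attached to $Q$. If $M_1(t)x=\lambda x$ and $M_1(t)y=\mu y$, the displayed identity gives
\[
\lambda\mu\,\beta(x,y)=a^{2/3}\beta(x,y),
\]
so $\beta(x,y)=0$ whenever $\lambda\mu\neq a^{2/3}$.

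Next we identify the spectrum of $M_1(t)$ explicitly.
\begin{itemize}
\item The vector $e_1$ is an eigenvector with eigenvalue $a$, since the first column of $M_1(t)$ is $(a,0,0)^\top$.
\item The eigenvalue $1$ has eigenspace
\[
W_1=\ker(M_1(t)-I)=\{x:(a-1)x_1+x_2+x_3=0\},
\]
because $(M_1(t)-I)x=\bigl((a-1)x_1+x_2+x_3\bigr)e_1$. Since $a\neq1$, this is a plane.
\item The vector $e_1$ does not lie in $W_1$, because $a-1\neq0$.
\end{itemize}
Hence $M_1(t)$ is diagonalizable, and $\R^3=\R e_1\oplus W_1$.

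The key step is to check which products of eigenvalues can equal $a^{2/3}$. The possible products are $a\cdot a=a^2$, $a\cdot 1=a$, and $1\cdot1=1$. For $a\in(1,2)$ none of these equals $a^{2/3}$: indeed $a^{2}\neq a^{2/3}$ and $a\neq a^{2/3}$ because $a\neq1$, and $1\neq a^{2/3}$ for the same reason. Therefore $\beta$ vanishes on all pairs $(e_1,e_1)$, $(e_1,w)$ and $(w,w')$ with $w,w'\in W_1$. These pairs span all of $\R^3\times\R^3$ bilinearly, so $\beta\equiv0$, and hence $Q=0$.

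There is no real obstacle here. The only point needing care is the diagonalizability of $M_1(t)$, which holds precisely because $a\neq1$, that is, $t<1$. At $t=1$ the matrix $M_1(1)$ is unipotent and non-diagonalizable, and the argument must change. It is worth noting that the relations for $A_2(t)$ and $A_3(t)$ are not needed at all; by the cyclic symmetry of the family, either of them would serve equally well in place of $A_1(t)$.
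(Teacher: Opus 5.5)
Your proof is correct, and it takes a different route from the paper. You also prove slightly more than the lemma asks: a single generator already admits no nonzero invariant form. In $SL_3(\R)$ the eigenvalues of $A_1(t)$ are $a^{2/3},a^{-1/3},a^{-1/3}$, and no pairwise product of these equals $1$ when $a\neq1$.

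The paper argues entrywise and uses all three generators:
\begin{itemize}
\item The $(i,i)$-entry of the equation for $M_i(t)$ reads $(a^2-a^{2/3})q_{ii}=0$, which kills the diagonal of $Q$.
\item A diagonal entry of the equation for a \emph{different} generator then kills each off-diagonal $q_{jk}$. For example, the $(2,2)$-entry for $M_1(t)$ is $q_{11}+2q_{12}+q_{22}=a^{2/3}q_{22}$, which forces $q_{12}=0$.
\end{itemize}

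What each approach buys:
\begin{itemize}
\item The paper's computation is quicker to check by hand and needs no diagonalizability. However, it hides the mechanism and gives the impression that the semigroup structure matters.
\item Your spectral argument explains why the lemma holds: an invariant form can only pair eigenvectors whose eigenvalues multiply to the scaling factor. It shows exactly where $t<1$ enters, through diagonalizability and the non-resonance $a\neq1$. It also shows the lemma is a property of each individual generator.
\end{itemize}

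This is consistent with the paper's method too. Reading off all six entries of $M_1(t)^\top Q M_1(t)=a^{2/3}Q$ alone already gives $q_{11}=0$, then $q_{12}=q_{13}=0$, then $q_{22}=q_{33}=q_{23}=0$. So the use of $M_2(t)$ and $M_3(t)$ there is a convenience, not a necessity.
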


\begin{proof}
Write $a=2-t\in(1,2)$. Since $A_i(t)=a^{-1/3}M_i(t)$, the displayed equation is equivalent to
\[
M_i(t)^\top Q M_i(t)=a^{2/3}Q \qquad (i=1,2,3).
\]
Let $Q=(q_{jk})_{1\le j,k\le 3}$. Comparing the $(1,1)$-entry in the equation for $i=1$, the $(2,2)$-entry in the equation for $i=2$, and the $(3,3)$-entry in the equation for $i=3$, one obtains
\[
(a^2-a^{2/3})q_{11}=0,\qquad
(a^2-a^{2/3})q_{22}=0,\qquad
(a^2-a^{2/3})q_{33}=0.
\]
Because $1<a<2$, we have $a^2\neq a^{2/3}$, so $q_{11}=q_{22}=q_{33}=0$. The $(2,2)$-entry in the equation for $i=1$ then gives $2q_{12}=0$, the $(3,3)$-entry in the equation for $i=2$ gives $2q_{23}=0$, and the $(1,1)$-entry in the equation for $i=3$ gives $2q_{13}=0$. Hence $Q=0$.
\end{proof}

\begin{proposition}[Zariski density]\label{prop:zariski}
For every $t\in(0,1)$, the semigroup $\Gamma_t$ is Zariski dense in $SL_3(\R)$.
\end{proposition}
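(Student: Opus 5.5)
Let $G$ be the Zariski closure of $\Gamma_t$ in $SL_3(\R)$. The Zariski closure of a semigroup in a linear algebraic group is a group, so $G$ is an algebraic subgroup containing $A_1(t),A_2(t),A_3(t)$. The plan is to rule out every proper algebraic subgroup, first through its identity component $G^\circ$ and then through the classification of connected subgroups of $SL_3(\R)$ acting irreducibly.

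\textbf{Step 1: $G^\circ$ acts irreducibly on $\R^3$.}
By Lemma~\ref{lem:strongirr}, $\Gamma_t$ is strongly irreducible. Hence $G$ preserves no finite union of proper nonzero subspaces, because such a condition is Zariski closed and would already be preserved by $\Gamma_t$. If $G^\circ$ preserved a proper subspace $V$, then the finite set of $G$-translates $gV$, indexed by $G/G^\circ$, would be $G$-invariant. That is a contradiction.

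\textbf{Step 2: $G^\circ$ is semisimple and irreducible.}
Since $G^\circ$ acts irreducibly, its unipotent radical is trivial, so $G^\circ$ is reductive. Its center acts by scalars (Schur, after complexifying if needed); inside $SL_3$ this center is finite. Thus $G^\circ$ is a connected semisimple subgroup of $SL_3(\R)$ acting irreducibly on $\R^3$. Over $\C$, the connected semisimple subgroups of $SL_3$ are $SL_3$, $SL_2$ (embedded via the reducible $2+1$ or the irreducible $\mathrm{Sym}^2$ representation), and the trivial group. The irreducible ones are therefore $SL_3$ itself and $SO(Q)^\circ\cong PGL_2$ for a nondegenerate quadratic form $Q$. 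The case $G^\circ=\{1\}$ is excluded because $G$ would be finite, while $A_1(t)$ has infinite order: its eigenvalue $a^{2/3}>1$.

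\textbf{Step 3: excluding the orthogonal case.}
Suppose $G^\circ=SO(Q)^\circ$. The form $Q$ is unique up to scalar, since $\mathrm{Sym}^2$ of the standard representation of $\mathfrak{so}_3$ contains the trivial representation exactly once. Because $G$ normalizes $G^\circ$, it preserves the line $\R Q$. Hence $A_i(t)^\top Q A_i(t)=\chi_i Q$ for each $i$. Taking determinants gives $\chi_i^3=1$, so $\chi_i=1$ over $\R$. This contradicts Lemma~\ref{lem:noquad}.

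Therefore $G^\circ=SL_3(\R)$ in the real-points sense, and $G=SL_3(\R)$ is Zariski dense.

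The main obstacle is the classification in Step 2, together with the real-versus-complex bookkeeping. To keep it clean, I would complexify: take the Zariski closure in $SL_3(\C)$, run Steps 1–3 there, and note that Lemmas~\ref{lem:strongirr} and~\ref{lem:noquad} remain valid over $\C$. For Lemma~\ref{lem:strongirr} I would re-check the same limit argument for complex lines. For Lemma~\ref{lem:noquad} the computation is entrywise linear, so it holds for complex symmetric $Q$ as well.
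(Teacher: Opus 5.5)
Your proof is correct and follows the paper's route. Strong irreducibility forces $G^\circ$ to act irreducibly, and the trivial unipotent radical together with a scalar center makes $G^\circ$ semisimple. The classification then leaves only $\mathfrak{sl}_3$ or the $\mathrm{Sym}^2$ embedding of $\mathfrak{sl}_2$, and the latter is excluded via Lemma~\ref{lem:noquad}.

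Your Step 3 is slightly more careful than the paper, which appeals to Lemma~\ref{lem:noquad} directly. Because the generators lie in $G$ but not necessarily in $G^\circ$, your normalizer argument ($A_i(t)^\top Q A_i(t)=\chi_i Q$ with $\chi_i^3=1$, hence $\chi_i=1$ for real $Q$) is exactly what makes that appeal legitimate.
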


\begin{proof}
Let $G$ be the Zariski closure of $\Gamma_t$. We first claim that the identity component $G^\circ$ acts irreducibly on $\R^3$. Indeed, if $W$ were a nonzero proper $G^\circ$-invariant subspace, then the finite union
\[
\bigcup_{g\in G/G^\circ} gW
\]
would be $\Gamma_t$-invariant, contradicting Lemma~\ref{lem:strongirr}.

The unipotent radical of $G^\circ$ is therefore trivial: its fixed-point space is nonzero by Lie--Kolchin, and because the radical is normal that fixed-point space is $G^\circ$-invariant, hence all of $\R^3$; faithfulness then forces the radical to be trivial. So $G^\circ$ is reductive. By Schur's lemma, the connected center of $G^\circ$ acts by scalars, and since $G^\circ\subset SL_3(\R)$, that connected center is trivial. Thus $G^\circ$ is semisimple.

Complexifying the Lie algebra, one obtains a semisimple Lie subalgebra of $\mathfrak{sl}_3(\C)$ with a faithful irreducible $3$-dimensional representation. The only possibilities are $\mathfrak{sl}_3(\C)$ and $\mathfrak{sl}_2(\C)$. If the first occurs, then $G^\circ=SL_3(\R)$ and we are done. In the second case the representation is the symmetric-square representation of $SL_2$, which preserves a nondegenerate quadratic form. That is impossible by Lemma~\ref{lem:noquad}. Therefore $G^\circ=SL_3(\R)$, and hence $G=SL_3(\R)$.
\end{proof}

\begin{proposition}[The affinity dimension is strictly smaller than $2$]\label{prop:sAlesstwo}
For every $t\in(0,1)$ one has
\[
s_A(\Gamma_t)<2.
\]
\end{proposition}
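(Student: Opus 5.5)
Since $\Gamma_t\subset SL_3(\R)$, we have $\alpha_1\alpha_2\alpha_3=1$. Hence at the top exponent
\[
\varphi^2(g)=\frac{\alpha_2(g)\alpha_3(g)}{\alpha_1(g)^2}=\alpha_1(g)^{-3}=\|g\|^{-3}.
\]
The plan has three steps: show that $\sum_{|w|=n}\|A_w(t)\|^{-3}$ decays \emph{geometrically} in $n$; then perturb the exponent slightly below $2$; then conclude from the definition of $s_A(\Gamma_t)$ as the critical exponent. The bridge between norms and geometry is the area of the cylinders $C_w(t)=f_{w,t}(T)$. For a matrix $M$ with $\det M>0$, the projective map $x\mapsto Mx/\mathbf 1^\top Mx$ on $\D$ has area Jacobian $\det M\,(\mathbf 1^\top Mx)^{-3}$. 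Since $\det M_w(t)=a^{n}$ and $A_w=a^{-n/3}M_w$, this gives
\[
\operatorname{area}(C_w(t))=c_0\int_{\D}\bigl(\mathbf 1^\top A_w(t)x\bigr)^{-3}\,dx .
\]
The entries of $M_w(t)$ are nonnegative and $x\in\D$, so $\mathbf 1^\top A_wx\le C\|A_w\|$. Therefore $\|A_w(t)\|^{-3}\le C'\operatorname{area}(C_w(t))$.

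The cylinders of level $n$ overlap only in finitely many points, by \cref{prop:firstlevel} and injectivity of the maps. Hence
\[
\sum_{|w|=n}\|A_w\|^{-3}\le C'\,\operatorname{area}\Bigl(\bigcup_{|w|=n}C_w(t)\Bigr).
\]
It therefore suffices to show that the level-$n$ approximant has area at most $C(1-\rho)^n$ for some $\rho=\rho(t)>0$. Passing from level $n$ to level $n+1$ removes from each $C_w$ the hole $f_{w,t}(H)$. So I need the uniform bound
\[
\operatorname{area}(f_{w,t}(H))\ge\rho\,\operatorname{area}(C_w(t))\qquad\text{for all } w.
\]
By the Jacobian formula, this follows from a uniform bound on the distortion ratio
\[
\frac{\max_{x\in\D}\mathbf 1^\top M_wx}{\min_{x\in\D}\mathbf 1^\top M_wx}.
\]

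This distortion bound is the main obstacle. It genuinely fails at $t=1$: there $f_1^n$ has polynomial, parabolic behaviour (cf.\ \cref{cor:collapse}), and $\mathbf 1^\top M_1^n x$ is comparable to $1+ns$, which degenerates. For $t<1$ I would use \cref{prop:positiveconj}. Fix the positive matrices $B_i=P^{-1}A_iP$ with $P=P_{a,c}$. Strictly positive matrices map the positive cone into a compact subcone, so by Birkhoff's contraction for the Hilbert metric, every product $B_w$ satisfies $B_w y\asymp\|B_w\|\,\|y\|$ for all $y$ in the positive cone, with constants independent of $w$.

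Transporting back, $M_w=a^{n/3}PB_wP^{-1}$. I need the cone $P^{-1}\R^3_{\ge0}$ to sit inside the positive cone. For $c$ near $1/a$, $P^{-1}$ is close to a matrix with positive entries, which should give this inclusion; if not, I would shrink $\D$ to a compact subtriangle containing $H$, which suffices for the area ratio. This yields
\[
\mathbf 1^\top M_wx\asymp a^{n/3}\|B_w\|\qquad\text{uniformly for } x\in\D,
\]
and hence the uniform $\rho$. An alternative is to get $\rho$ directly from the explicit contraction in \cref{prop:contraction}, but the cone argument is cleaner.

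Finally, I perturb the exponent. For $1\le s\le2$,
\[
\varphi^s(A_w)=\varphi^2(A_w)\,\Bigl(\frac{\alpha_1(A_w)}{\alpha_3(A_w)}\Bigr)^{2-s}\le\varphi^2(A_w)\,\Lambda^{n(2-s)},
\]
where $\Lambda=\max_i\|A_i\|\,\|A_i^{-1}\|$. Combined with the bound $\sum_{|w|=n}\varphi^2(A_w)\le C(1-\rho)^n$, the word series still converges for every $s$ with $\Lambda^{2-s}(1-\rho)<1$. Taking such an $s<2$ gives $s_A(\Gamma_t)\le s<2$.
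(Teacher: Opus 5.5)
Your argument is correct apart from one sign slip, described in the second paragraph. It follows a genuinely different route from the paper.

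\textbf{Comparison with the paper.} The paper estimates only the one-step sum. From $\alpha_1(M_i(t))\ge\|M_i(t)^\top\mathbf 1\|_2/\sqrt3$ it gets $\sum_i\varphi^2(A_i(t))\le 9\sqrt3\,a/(a^2+8)^{3/2}\le 3/4$. It then perturbs to $2-\eta$ by continuity and passes to level $n$ by invoking submultiplicativity of $\varphi^{s}$. You instead control the level-$n$ sum directly:
\begin{enumerate}[label=(\alph*)]
\item the Jacobian identity bounds $\|A_w\|^{-3}$ by a constant times $\operatorname{area}(C_w(t))$;
\item cylinders of a fixed level have disjoint interiors, so the level-$n$ sum is bounded by the area of the level-$n$ approximant;
\item bounded distortion, coming from the positive conjugation, makes each cylinder lose a fixed proportion $\rho$ of its area to $f_{w,t}(H)$;
\item the only multiplicative estimate you then use is for $\alpha_1/\alpha_3=\|A\|\,\|A^{-1}\|$, which really is submultiplicative.
\end{enumerate}
The paper's route is shorter and fully explicit. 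Yours is longer, but it is the one that actually controls $\sum_{|w|=n}\varphi^2(A_w(t))$. On $SL_3(\R)$ one has $\varphi^2=\alpha_1^{-3}$, and $\alpha_1$ is submultiplicative, so $\varphi^2$ is \emph{super}multiplicative. For instance, at $t=1/2$ one computes $\varphi^2(A_1^2)\approx 1.40\,\varphi^2(A_1)^2$. A bound on the generators alone therefore gives no upper bound on the word series, and your area/distortion argument supplies exactly the multi-step control the paper's proof lacks.

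\textbf{The sign slip.} It concerns $P=P_{a,c}$. One has $P^{-1}=\frac{1}{1+c}\bigl(I+\frac{c}{1-2c}\mathbf 1\mathbf 1^\top\bigr)$. Since $c>1/a>1/2$, every entry of $P^{-1}$ is strictly \emph{negative}, and $\mathbf 1^\top P=(1-2c)\mathbf 1^\top$. So $P^{-1}\R^3_{\ge0}$ lies in the negative cone, and $P^{-1}$ is not close to a positive matrix. This is harmless projectively:
\begin{enumerate}[label=(\alph*)]
\item replacing $P$ by $-P$ leaves every $B_i$ unchanged;
\item $-P^{-1}$ is strictly positive;
\item $\mathbf 1^\top M_w(t)x=a^{n/3}(2c-1)\,\mathbf 1^\top B_w\bigl(-P^{-1}x\bigr)$.
\end{enumerate}
All entries of any product of the finitely many strictly positive $B_i$ lie within a fixed ratio of one another. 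Hence $\mathbf 1^\top M_w(t)x\asymp a^{n/3}\|B_w\|$ uniformly for $x\in\D$, i.e.\ the column sums of $M_w(t)$ are uniformly comparable, which is exactly the distortion bound you need. Your fallback of shrinking $\D$ would not have worked: $\operatorname{area}(C_w(t))$ integrates the Jacobian over all of $\D$, so distortion must be controlled up to the vertices. With the sign corrected, the fallback is unnecessary.
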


\begin{proof}
Let $a=2-t\in(1,2)$, so $A_i(t)=a^{-1/3}M_i(t)\in SL_3(\R)$. For $s=2$ one has
\[
\varphi^2(A)=\frac{\alpha_2(A)\alpha_3(A)}{\alpha_1(A)^2}=\frac{1}{\alpha_1(A)^3}
\]
for every $A\in SL_3(\R)$, because $\alpha_1(A)\alpha_2(A)\alpha_3(A)=1$.

Each matrix $M_i(t)$ is permutation-conjugate to $M_1(t)$, so it is enough to estimate one of them. Since
\[
M_1(t)^\top \mathbf 1=(a,2,2)^\top,
\]
we obtain
\[
\alpha_1(M_i(t))\ge \frac{\|M_i(t)^\top\mathbf 1\|_2}{\|\mathbf 1\|_2}=\sqrt{\frac{a^2+8}{3}}
\qquad (i=1,2,3).
\]
Therefore
\[
\varphi^2(A_i(t))=\frac{1}{\alpha_1(A_i(t))^3}=\frac{a}{\alpha_1(M_i(t))^3}
\le \frac{3^{3/2}a}{(a^2+8)^{3/2}}.
\]
Summing over the three generators gives
\[
\sum_{i=1}^3 \varphi^2(A_i(t))\le \frac{9\sqrt 3\,a}{(a^2+8)^{3/2}}.
\]
The right-hand side is increasing on $a\in[1,2]$, because its derivative has the sign of $4-a^2$. Hence
\[
\sum_{i=1}^3 \varphi^2(A_i(t))\le \frac{9\sqrt 3\cdot 2}{(2^2+8)^{3/2}}=\frac34<1.
\]
Since the set of generators is finite and $s\mapsto\varphi^s(A_i(t))$ is continuous, there is an $\eta>0$ such that
\[
\sum_{i=1}^3 \varphi^{2-\eta}(A_i(t))<1.
\]
By submultiplicativity of the singular-value potential,
\[
\sum_{|w|=n}\varphi^{2-\eta}(A_w(t))
\le \Bigl(\sum_{i=1}^3\varphi^{2-\eta}(A_i(t))\Bigr)^n.
\]
Thus the defining word series converges at $s=2-\eta$, and consequently $s_A(\Gamma_t)\le 2-\eta<2$.
\end{proof}

\begin{theorem}[Exact dimension of the intermediate fibers]\label{thm:intermediatedimension}
For every $t\in(0,1)$,
\[
\dim_H K_t=s_A(\Gamma_t).
\]
\end{theorem}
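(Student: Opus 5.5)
The plan is to apply Jurga's dimension theorem for projective attractors of positive, strongly irreducible, Zariski dense semigroups in $SL_3(\R)$ \cite{Jurga2026}, and then remove the truncation at $2$. Jurga's result gives $\dim_H K = \min\{2, s_A(\Gamma)\}$ for the projective limit set. It requires three kinds of hypotheses: the generators are simultaneously conjugate to strictly positive matrices, so that the projective IFS is a strict contraction in the Hilbert metric on a compact cone; the semigroup is strongly irreducible and Zariski dense; and a separation condition holds, namely strong open set or strong separation, or a Diophantine condition.

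The first step checks positivity and places the attractor inside the positive-cone framework. By Proposition~\ref{prop:positiveconj}, the matrices $B_i(t,c)$ are strictly positive, so they map the closed positive cone strictly inside itself. Conjugating back by $P_{a,c}$, the three $A_i(t)$ preserve the cone $P_{a,c}\R^3_{\ge0}$ and act on its projectivization as Hilbert-metric contractions. I would then confirm that $K_t$ coincides with the projective limit set of $\Gamma_t$ that Jurga considers. For $c$ close to $1/a$, the set $P_{a,c}\R^3_{\ge0}$ is a cone slightly larger than the standard cone over $\D$; note that $P_{1/a,\cdot}$ columns lie outside $\D$ directions. Because $M_i(t)$ maps $\D$ into $\D$, and by uniqueness of the attractor in Proposition~\ref{prop:contraction}, the attractor of the conjugated system is exactly $K_t$.

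The second step collects the algebraic hypotheses. Strong irreducibility is Lemma~\ref{lem:strongirr} and Zariski density is Proposition~\ref{prop:zariski}.

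The third step, which I expect to be the main obstacle, is the separation hypothesis. Here I would use the parameter-independent first-level geometry of Proposition~\ref{prop:firstlevel}. The pieces $f_{i,t}(\D)=\D_i$ meet only at the midpoints $m_{ij}$, and the open hole $H$ is nonempty. This gives an open set condition with the interior of $\D$, with $f_{i,t}(\mathrm{int}\,\D)$ pairwise disjoint. It is not strong separation, because of the tangencies at the $m_{ij}$. If Jurga's theorem needs more than this, I would try two routes. The first is to pass to the enlarged cone and check that the images of the slightly enlarged simplex under the conjugated maps still overlap only in a controlled way. The second is to invoke the version of Jurga's theorem covering the Rauzy endpoint itself, where the same touching pattern occurs, and argue that its hypotheses hold verbatim for $0<t<1$, since the combinatorics is identical by Theorem~\ref{thm:addressrelation}. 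The essential point is that the maps are injective on $\D$ and the first-level cylinders have disjoint interiors, so the counting of cylinders is exact.

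Finally, Jurga's theorem yields $\dim_H K_t=\min\{2,s_A(\Gamma_t)\}$, and Proposition~\ref{prop:sAlesstwo} gives $s_A(\Gamma_t)<2$, so the minimum equals $s_A(\Gamma_t)$.
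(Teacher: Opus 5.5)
Your route is the paper's: positivity via $P_{a,c}$ (Proposition~\ref{prop:positiveconj}), Zariski density via Lemma~\ref{lem:strongirr} and Proposition~\ref{prop:zariski}, Jurga's theorem giving $\min\{2,s_A(\Gamma_t)\}$, and Proposition~\ref{prop:sAlesstwo} to remove the truncation.

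\textbf{The gap.} The step you flag as the main obstacle is left unresolved: you prove only the open set condition for $\operatorname{int}T$ and then hedge. The hypothesis the paper uses (Jurga's Theorem~1.3) is the \emph{strong} open set condition, and it holds with the same open set $U=\operatorname{int}T$. Beyond what you already have ($f_{i,t}(U)\subset U$ with pairwise disjoint images), the only extra requirement is $U\cap K_t\neq\emptyset$.

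\textbf{The fix.} The paper gets $U\cap K_t\neq\emptyset$ from $f_{123,t}(T)\subset\operatorname{int}T$. This holds because $M_1(t)M_2(t)M_3(t)$ has strictly positive entries, so $\emptyset\neq f_{123,t}(K_t)\subset K_t\cap\operatorname{int}T$. More directly, $m_{23}=f_{2,t}(e_3)\in K_t$, so $f_{1,t}(m_{23})=(1/4,1/4)$ is a point of $K_t$ lying in $\operatorname{int}T$. The tangencies at the $m_{ij}$ play no role in this condition, so neither of your fallback routes is needed.

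\textbf{Your second fallback route.} It would not work in any case. At $t=1$ the generators are unipotent, and by Perron--Frobenius they cannot be conjugate to strictly positive matrices. So the Rauzy endpoint is not an instance of the positive-semigroup theorem, and there is no hypothesis list to carry over ``verbatim'' to $0<t<1$.

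\textbf{A smaller point.} Suppose you apply the theorem to the conjugated positive system rather than to $\Gamma_t$ acting on the cone $P_{a,c}\R^3_{\ge0}$. Then you need the two transfer facts the paper records. First, $s_A$ is invariant under linear conjugation. Second, the projective map induced by $P_{a,c}$ is bi-Lipschitz near the attractor, so it preserves $\dim_H$.
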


\begin{proof}
Write $a=2-t$ and choose $c$ as in Proposition~\ref{prop:positiveconj}. Set
\[
\widetilde\Gamma_t:=P_{a,c}^{-1}\Gamma_t P_{a,c}.
\]
By Proposition~\ref{prop:positiveconj}, the generators of $\widetilde\Gamma_t$ lie in $SL_3(\R)_{>0}$. By Proposition~\ref{prop:firstlevel}, the original system satisfies the strong open set condition with open set $\operatorname{int}(T)$: the sets $f_{i,t}(\operatorname{int}T)$ are pairwise disjoint subsets of $\operatorname{int}T$, and $f_{123,t}(T)\subset \operatorname{int}T$ because the product matrix $M_1(t)M_2(t)M_3(t)$ has strictly positive entries. Projective conjugacy transfers this property to $\widetilde\Gamma_t$. By Proposition~\ref{prop:zariski}, the semigroup $\widetilde\Gamma_t$ is Zariski dense because linear conjugacy changes the Zariski closure only by conjugation.

Jurga's dimension theorem for positive $SL_3$-semigroups satisfying the strong open set condition \cite[Theorem~1.3]{Jurga2026} therefore gives
\[
\dim_H \widetilde K_t = \min\{2,s_A(\widetilde\Gamma_t)\},
\]
where $\widetilde K_t$ denotes the projective attractor of $\widetilde\Gamma_t$. Linear conjugacy changes singular values only by multiplicative constants depending on $P_{a,c}$, so $s_A(\widetilde\Gamma_t)=s_A(\Gamma_t)$. The projective automorphism of $\mathbb{RP}^2$ induced by $P_{a,c}$ is a smooth diffeomorphism, hence bi-Lipschitz on compact subsets in affine charts, and it carries $\widetilde K_t$ onto $K_t$. Consequently,
\[
\dim_H K_t=\min\{2,s_A(\Gamma_t)\}.
\]
The ambient truncation is absent by Proposition~\ref{prop:sAlesstwo}, so in fact $\dim_H K_t=s_A(\Gamma_t)$.
\end{proof}

\begin{corollary}[Dimensions on the full parameter interval]\label{cor:fulldimensions}
Let $\Gamma_1:=\langle M_1(1),M_2(1),M_3(1)\rangle$ be the Rauzy semigroup. Then
\[
\dim_H K_t=
\begin{cases}
\dfrac{\log 3}{\log 2},& t=0,\\[1ex]
s_A(\Gamma_t),& 0<t<1,\\[1ex]
s_A(\Gamma_1),& t=1.
\end{cases}
\]
\end{corollary}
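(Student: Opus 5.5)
The corollary assembles three separate results, one for each case, so I would prove it by splitting on $t$.

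\emph{Case $t=0$.} Here $f_{i,0}=g_i$ are the three affine similarities with ratio $1/2$. By Proposition~\ref{prop:firstlevel}, the open set $\operatorname{int}T$ satisfies the open set condition: the images $g_i(\operatorname{int}T)$ are pairwise disjoint and contained in $\operatorname{int}T$. Hutchinson's theorem \cite{Hutchinson1981} then gives $\dim_H S$ as the unique solution $s$ of $3\cdot 2^{-s}=1$, namely $s=\log3/\log2$. As a consistency check, one could verify that this agrees with the affinity-dimension formalism. The matrices $M_i(0)$ normalized to $SL_3$ give $\varphi^s$ equal to $2^{-s}$ up to bounded constants, because the projective action is affine with a uniform linear part $\tfrac12 I$ on the chart. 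This check is optional, since Hutchinson suffices.

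\emph{Case $0<t<1$.} This is exactly Theorem~\ref{thm:intermediatedimension}, so nothing further is needed.

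\emph{Case $t=1$.} By Remark~\ref{rem:rauzyendpoint}, $K_1=R$ is the Rauzy gasket. The generators $M_i(1)$ have determinant $1$, so $\Gamma_1$ already lies in $SL_3(\R)$ without normalization. Its projective action on $\D$ consists of the Rauzy inverse branches $f_{i,1}$. Jurga's theorem for the Rauzy gasket \cite{Jurga2026} identifies $\dim_H R$ with the affinity dimension $s_A(\Gamma_1)$ of this semigroup. Jurga also shows $\dim_H R<2$, so the truncation $\min\{2,\cdot\}$ is inactive.

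The main obstacle is this last case. One must check that Jurga's normalization of the Rauzy semigroup, and his definition of $s_A$, match the matrices $M_i(1)$ and the critical-exponent formulation used here. If Jurga states the result for transposes or conjugates, I would invoke the facts already used in the proof of Theorem~\ref{thm:intermediatedimension}. Transposition preserves singular values. Conjugation changes singular values only by bounded multiplicative factors. Neither operation therefore changes the critical exponent. Once this matching is settled, the three cases together give the displayed formula.
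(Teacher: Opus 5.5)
Your proposal is correct and follows the paper's proof: the case $t=0$ by Hutchinson's similarity-dimension formula, the range $0<t<1$ directly from Theorem~\ref{thm:intermediatedimension}, and $t=1$ from Jurga's theorem for the Rauzy gasket. Your additions are sound refinements of points the paper leaves implicit: the explicit open set condition via Proposition~\ref{prop:firstlevel}, the observation that $\det M_i(1)=1$, and the invariance of the critical exponent under transposition and conjugation when matching Jurga's normalization.
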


\begin{proof}
The case $t=0$ is the classical Hausdorff-dimension computation for the Sierpi\'nski gasket \cite{Hutchinson1981}. The range $0<t<1$ is exactly Theorem~\ref{thm:intermediatedimension}. The identity at $t=1$ is Jurga's theorem for the Rauzy gasket \cite{Jurga2026}.
\end{proof}

\begin{remark}
The formula for $\dim_H K_t$ is exact, but it remains implicit through the projective affinity dimension $s_A(\Gamma_t)$. Natural next questions are the regularity, monotonicity, and effective computation of the map $t\mapsto s_A(\Gamma_t)$.
\end{remark}

\section{The Sierpi\'nski--Apollonian M\"obius deformation}
This section gives the conformal counterpart to the affine-projective construction. The maps now act on a common closed disk by M\"obius transformations; the affine Sierpi\'nski endpoint occurs at $t=0$, while the endpoint at $t=1$ is parabolic and gives the equilateral Apollonian gasket. Throughout this section the attractors of the M\"obius family are denoted by $\mathcal A_t$, to distinguish them from the affine-projective fibers $K_t$ of the previous section.

The construction theorem below fixes the normalization and records the structural properties needed later. Its proof is intentionally separated into short propositions: endpoint formulas, disk invariance, strict contractivity for $t<1$, and Hausdorff-continuity on compact hyperbolic parameter intervals. The theorem is then proved formally after these ingredients have been established.

\begin{theorem}[The Sierpi\'nski--Apollonian M\"obius family]\label{ap:thm:main}
Let $v_1=\ii$, $v_2=-\frac{\sqrt 3}{2}-\frac{\ii}{2}$, and $v_3=\frac{\sqrt 3}{2}-\frac{\ii}{2}$. For $t\in[0,1]$ define
\[
  R_t=\frac{1-t}{2}+t(2\sqrt 3-3),
  \qquad
  \alpha_t=t\,\frac{1-\sqrt 3}{2},
\]
and for $i\in\{1,2,3\}$ define
\begin{equation}\label{ap:eq:family}
  \Phi_{i,t}(z)
  =R_t\,\frac{z+\alpha_t v_i}{1+\alpha_t\overline{v_i}\,z}+(1-R_t)v_i.
\end{equation}
For $t<1$, let $\mathcal A_t$ denote the attractor supplied by part~\textup{(iii)}, and let $\mathcal A_1$ denote the equilateral Apollonian gasket generated by the endpoint maps in part~\textup{(ii)}. Then the following statements hold.
\begin{enumerate}[label=(\roman*)]
\item At $t=0$ one has
\[
  \Phi_{i,0}(z)=\frac{z+v_i}{2},
\]
so $\mathcal A_0$ is the standard equilateral Sierpi\'nski gasket.
\item At $t=1$ the maps $\Phi_{i,1}$ are the standard parabolic generators of the equilateral Apollonian gasket in the present normalization.
\item For every $t<1$, each $\Phi_{i,t}$ maps the closed unit disk $\Disk$ to itself and is a strict Euclidean contraction on $\Disk$ with Lipschitz constant at most
\[
  L_t=R_t\frac{1-\alpha_t}{1+\alpha_t}<1.
\]
Hence there is a unique compact attractor $\mathcal A_t\subset \Disk$ satisfying
\[
  \mathcal A_t=\Phi_{1,t}(\mathcal A_t)\cup\Phi_{2,t}(\mathcal A_t)\cup\Phi_{3,t}(\mathcal A_t).
\]
\item For every $\varepsilon\in(0,1)$, the family $t\mapsto \mathcal A_t$ is continuous in the Hausdorff metric on $[0,1-\varepsilon]$.
\end{enumerate}
\end{theorem}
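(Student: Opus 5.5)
The plan is to analyse $\Phi_{i,t}$ as a composition of three elementary maps: a disk automorphism, a homothety and a translation. Since $|v_i|=1$ and $\alpha_t=t(1-\sqrt3)/2\in(-1,0]$ is real, the inner map
\[
B_{i,t}(z)=\frac{z+\alpha_t v_i}{1+\alpha_t\overline{v_i}\,z}
\]
is a Möbius automorphism of $\Disk$, and it fixes $v_i$ because $B_{i,t}(v_i)=v_i(1+\alpha_t)/(1+\alpha_t)$. Hence $\Phi_{i,t}(\Disk)$ is the closed disk of radius $R_t$ centred at $(1-R_t)v_i$. This disk lies in $\Disk$ and is internally tangent to the unit circle at $v_i$, since $R_t+(1-R_t)=1$ and $R_t\in(0,1)$ for all $t\in[0,1]$. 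This gives the disk invariance in (iii).

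\textbf{Part (i).} Substitute $t=0$. Then $\alpha_0=0$ and $R_0=1/2$, so $\Phi_{i,0}(z)=(z+v_i)/2$. The attractor is therefore the equilateral Sierpi\'nski gasket on $v_1,v_2,v_3$, by the same identification used for $g_1,g_2,g_3$ in the affine section.

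\textbf{Contraction in part (iii).} Differentiate to get
\[
\Phi_{i,t}'(z)=\frac{R_t(1-\alpha_t^2)}{(1+\alpha_t\overline{v_i}z)^2}.
\]
For $|z|\le1$ and $\alpha_t\le0$ we have $|1+\alpha_t\overline{v_i}z|\ge 1-|\alpha_t|=1+\alpha_t$. This gives $|\Phi_{i,t}'|\le R_t(1-\alpha_t)/(1+\alpha_t)=L_t$ on $\Disk$, and convexity of $\Disk$ turns the derivative bound into a Lipschitz bound.

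It remains to show $L_t<1$ for $t<1$; I expect this to be the only genuinely computational step. The first check is $t=1$. Here $R_1=2\sqrt3-3$ and $(1-\alpha_1)/(1+\alpha_1)=(1+\sqrt3)/(3-\sqrt3)$, and multiplying out gives exactly $L_1=1$. For $t<1$, clear the positive denominator $1+\alpha_t$. The inequality $R_t(1-\alpha_t)<1+\alpha_t$ becomes a quadratic polynomial inequality in $t$ whose polynomial vanishes at $t=1$. I would factor out $(1-t)$ and check that the remaining linear factor has constant sign on $[0,1]$, using its value $1/2>0$ at $t=0$. Hutchinson's theorem then gives the unique attractor $\mathcal A_t$.

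\textbf{Part (ii).} At $t=1$ each $\Phi_{i,1}$ fixes $v_i$. Its derivative there is
\[
\Phi_{i,1}'(v_i)=\frac{R_1(1-\alpha_1^2)}{(1+\alpha_1)^2}=L_1=1,
\]
with a double fixed point, so $\Phi_{i,1}$ is parabolic. Its image disk has radius $2\sqrt3-3$ and centre $(1-R_1)v_i$, which is precisely one of the three mutually tangent equal circles inscribed in the unit circle. I would also verify that $\Phi_{i,1}(v_j)=\Phi_{j,1}(v_i)$ is the tangency point of the $i$-th and $j$-th circles. Together these show that the maps realise the standard generators of the equilateral Apollonian packing in this normalization.

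\textbf{Part (iv).} The function $t\mapsto L_t$ is continuous and $<1$ on $[0,1-\eps]$, so $L:=\max_{[0,1-\eps]}L_t<1$. Since $(z,t)\mapsto\Phi_{i,t}(z)$ is continuous on the compact set $\Disk\times[0,1-\eps]$, it is uniformly continuous. The standard fixed-point estimate
\[
d_H(\mathcal A_s,\mathcal A_t)\le \frac{\max_i\sup_{\Disk}|\Phi_{i,s}-\Phi_{i,t}|}{1-L}
\]
then gives Hausdorff continuity on $[0,1-\eps]$.
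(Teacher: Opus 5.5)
Your proposal is correct and is essentially the paper's proof: the same decomposition of $\Phi_{i,t}$ into a disk automorphism followed by a homothety toward $v_i$, the same bound $|1+\alpha_t\overline{v_i}z|\ge 1+\alpha_t$ on $\Disk$, the same factorization $1-L_t=(1-t)\bigl(2-(11\sqrt3-19)t\bigr)/\bigl(2(2-(\sqrt3-1)t)\bigr)$, and the same uniform fixed-point estimate for (iv); when you check the sign of the linear factor, evaluate it at $t=1$ as well, since its value $1/2$ at $t=0$ alone does not rule out a root in $(0,1)$. The only variation is in (ii): the paper conjugates by $z\mapsto-\ii z$ and matches the explicit McMullen/Vytnova--Wormell formula, whereas you identify each map by its three-point data ($v_i\mapsto v_i$, and $v_j$ goes to the tangency point of the $i$-th and $j$-th carrier disks, both of which hold), which is an equally valid and more self-contained check.
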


Representative finite-level fibers are shown in \cref{ap:fig:slices}; the corresponding first-level carrier disks appear in \cref{ap:fig:carriers}. The theorem concerns the uniformly contracting, hyperbolic regime $t<1$ and identifies the parabolic endpoint at the level of the generators.

\begin{figure}[t]
  \centering
  \includegraphics[width=.98\textwidth]{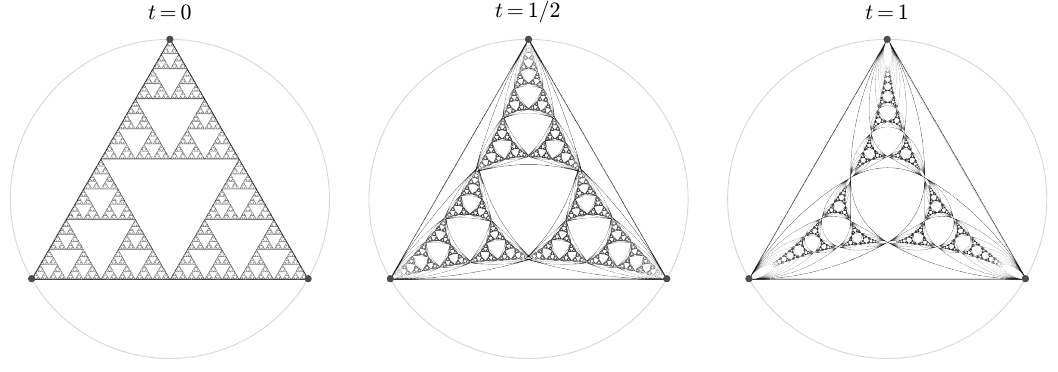}
  \caption{Representative fibers $\mathcal A_t$ at $t=0$, $t=1/2$, and $t=1$.  The black curves are high-depth finite-level images of the three sides of the initial equilateral triangle under the M\"obius semigroup, chosen to stay visually close to the limiting set at journal scale.  The light gray circle is the invariant disk and the light gray triangle records the common fixed vertices.}
  \label{ap:fig:slices}
\end{figure}

\subsection{The one-parameter family}
Let
\[
  \Disk=\{z\in\C:|z|\le 1\}.
\]
The vertices $v_1,v_2,v_3$ lie on the unit circle and form an equilateral triangle. The constants in \eqref{ap:eq:family} are chosen so that the affine endpoint and the parabolic endpoint are both recovered exactly, while the closed unit disk remains a natural carrier set throughout the deformation.

It is convenient to separate the two pieces of \eqref{ap:eq:family}. Set
\[
  M_{i,t}(z)=\frac{z+\alpha_t v_i}{1+\alpha_t\overline{v_i}\,z},
  \qquad
  A_{i,t}(w)=R_t w+(1-R_t)v_i.
\]
Then
\[
  \Phi_{i,t}=A_{i,t}\circ M_{i,t}.
\]
For every $t\in[0,1]$ one has $\alpha_t\in[-(\sqrt 3-1)/2,0]$, hence $|\alpha_t|<1$.

\begin{proposition}[Endpoint formulas]\label{ap:prop:endpoints}
For all $i\in\{1,2,3\}$ and all $t\in[0,1]$ one has $\Phi_{i,t}(v_i)=v_i$. Moreover:
\begin{enumerate}[label=(\alph*)]
\item at $t=0$,
\[
  \Phi_{i,0}(z)=\frac{z+v_i}{2};
\]
\item at $t=1$, each $\Phi_{i,1}$ is parabolic with neutral fixed point $v_i$.
\end{enumerate}
\end{proposition}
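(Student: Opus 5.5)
The plan is to verify everything directly from the factorization $\Phi_{i,t}=A_{i,t}\circ M_{i,t}$, treating the three claims in turn. The only real content is an explicit multiplier computation at $t=1$.

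\textbf{Fixed point.} Since $|v_i|=1$, we have $\overline{v_i}\,v_i=1$, so
\[
M_{i,t}(v_i)=\frac{v_i(1+\alpha_t)}{1+\alpha_t}=v_i .
\]
The denominator $1+\alpha_t$ is nonzero because $|\alpha_t|<1$. The affine map $A_{i,t}$ fixes $v_i$ trivially, because $R_tv_i+(1-R_t)v_i=v_i$. Hence $\Phi_{i,t}(v_i)=v_i$ for every $t\in[0,1]$.

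\textbf{Case $t=0$.} Substituting $t=0$ gives $\alpha_0=0$ and $R_0=1/2$. Then $M_{i,0}=\mathrm{id}$ and $\Phi_{i,0}(z)=z/2+v_i/2$, which is formula (a).

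\textbf{Case $t=1$.} I will compute the multiplier $\Phi_{i,t}'(v_i)$ for general $t$ and then specialize.

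First, differentiating the disk automorphism gives
\[
M_{i,t}'(z)=\frac{1-\alpha_t^2}{(1+\alpha_t\overline{v_i}z)^2}.
\]
At $z=v_i$ this equals $(1-\alpha_t)/(1+\alpha_t)$. Since $A_{i,t}'=R_t$, the chain rule gives
\[
\Phi_{i,t}'(v_i)=R_t\,\frac{1-\alpha_t}{1+\alpha_t}=L_t .
\]

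At $t=1$ we have $\alpha_1=(1-\sqrt3)/2$, so $1-\alpha_1=(1+\sqrt3)/2$ and $1+\alpha_1=(3-\sqrt3)/2$. Rationalizing,
\[
\frac{1+\sqrt3}{3-\sqrt3}=\frac{3+2\sqrt3}{3}.
\]
Multiplying by $R_1=2\sqrt3-3$ gives
\[
\Phi_{i,1}'(v_i)=\frac{(2\sqrt3-3)(2\sqrt3+3)}{3}=\frac{12-9}{3}=1 .
\]
So $v_i$ is a neutral fixed point of $\Phi_{i,1}$.

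\textbf{Parabolicity.} It remains to rule out that $\Phi_{i,1}$ is the identity. I would check this by evaluating at the origin:
\[
\Phi_{i,1}(0)=(R_1\alpha_1+1-R_1)\,v_i .
\]
The scalar $R_1\alpha_1+1-R_1$ is positive, because $R_1\in(0,1)$ and $\alpha_1>-1$. Hence $\Phi_{i,1}(0)\neq0$, and $\Phi_{i,1}$ is not the identity.

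A non-identity Möbius map with a finite fixed point of multiplier $1$ is parabolic. To see this, conjugate by a Möbius map sending $v_i$ to $\infty$. The conjugate fixes $\infty$ with derivative $1$ there, so it is affine with linear part $1$, i.e.\ a translation $w\mapsto w+b$. It is nontrivial, so $b\neq0$, and a nontrivial translation is parabolic. Equivalently, after normalization the trace squared equals $4$. In particular $v_i$ is the unique fixed point.

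\textbf{Main obstacle.} The only step needing care is the radical arithmetic showing $L_1=1$. It is short once the multiplier has been written as $R_t(1-\alpha_t)/(1+\alpha_t)$, and this expression is also the quantity $L_t$ appearing in part~(iii) of Theorem~\ref{ap:thm:main}.
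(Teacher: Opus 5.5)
Your proof is correct and follows the paper's route. The fixed point comes from $\overline{v_i}v_i=1$, part (a) is direct substitution, and part (b) evaluates the multiplier $\Phi_{i,t}'(v_i)=R_t(1-\alpha_t)/(1+\alpha_t)$ to $1$ at $t=1$. You also write out the radical arithmetic and check that $\Phi_{i,1}$ is not the identity, which ``parabolic'' requires; the paper leaves both implicit.

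One small slip: $R_1\in(0,1)$ and $\alpha_1>-1$ alone do not make $R_1\alpha_1+1-R_1=1-R_1(1-\alpha_1)$ positive. For example, $R=0.9$ and $\alpha=-0.5$ give $-0.35$. Two easy repairs:
\begin{itemize}
\item Use the identity $R_1(1-\alpha_1)=1+\alpha_1$, which you have just proved. It shows the scalar equals $-\alpha_1=(\sqrt3-1)/2>0$.
\item Alternatively, note that $\Phi_{i,1}(\Disk)$ is a disk of radius $R_1<1$, so $\Phi_{i,1}$ cannot be the identity.
\end{itemize}
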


\begin{proof}
Since $|v_i|=1$, one has
\[
  M_{i,t}(v_i)=\frac{v_i+\alpha_t v_i}{1+\alpha_t\overline{v_i}v_i}=v_i,
\]
and therefore $\Phi_{i,t}(v_i)=A_{i,t}(v_i)=v_i$.

At $t=0$ one has $R_0=1/2$ and $\alpha_0=0$, so
\[
  \Phi_{i,0}(z)=\frac{1}{2}z+\frac{1}{2}v_i=\frac{z+v_i}{2}.
\]
This is exactly the classical Sierpi\'nski family.

A direct computation gives
\begin{equation}\label{ap:eq:derivative}
  \Phi_{i,t}'(z)=\frac{R_t(1-\alpha_t^2)}{(1+\alpha_t\overline{v_i}z)^2}.
\end{equation}
Evaluating at the fixed point $v_i$ yields
\[
  \Phi_{i,t}'(v_i)=R_t\frac{1-\alpha_t}{1+\alpha_t}.
\]
At $t=1$, the chosen constants satisfy
\[
  R_1\frac{1-\alpha_1}{1+\alpha_1}=1,
\]
so $\Phi_{i,1}$ is parabolic at its fixed point.
\end{proof}

\begin{remark}
After the Euclidean rotation $z\mapsto -\ii z$, the endpoint $t=1$ becomes the familiar Apollonian system generated by
\[
  z\longmapsto \frac{(\sqrt 3-1)z+1}{-z+\sqrt 3+1}
\]
and its two $2\pi/3$-rotates; see \cite{McMullen1998,VytnovaWormell2025}. Thus \eqref{ap:eq:family} is exactly a normalized Sierpi\'nski-to-Apollonian deformation.
\end{remark}

\begin{figure}[t]
  \centering
  \includegraphics[width=.98\textwidth]{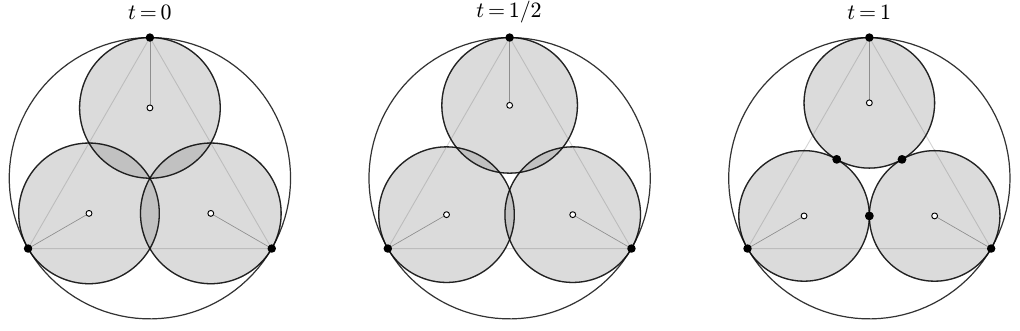}
  \caption{The three carrier disks $\Phi_{1,t}(\Disk)$, $\Phi_{2,t}(\Disk)$, and $\Phi_{3,t}(\Disk)$ inside the unit disk for the same parameter values as in \cref{ap:fig:slices}.  Each carrier is filled in transparent gray, so overlaps appear darker.  The overlap present at the affine Sierpi\'nski endpoint closes up continuously, becoming a mutually tangent Apollonian disk configuration at $t=1$.  Black dots mark the fixed boundary vertices and small white dots mark the disk centers.}
  \label{ap:fig:carriers}
\end{figure}

\subsection{Disk invariance and contractivity}
The formula \eqref{ap:eq:family} is arranged so that the closed unit disk is invariant.

\begin{proposition}[Disk invariance]\label{ap:prop:diskinv}
For every $t\in[0,1]$ and every $i\in\{1,2,3\}$ one has
\[
  \Phi_{i,t}(\Disk)\subset \Disk.
\]
\end{proposition}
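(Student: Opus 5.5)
We use the factorization $\Phi_{i,t}=A_{i,t}\circ M_{i,t}$ and treat the two factors separately.

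\emph{Step 1: the Möbius factor preserves the disk.} Since $\alpha_t$ is real with $|\alpha_t|<1$ and $|v_i|=1$, the point $c:=\alpha_t v_i$ lies in the open unit disk and $\overline{c}=\alpha_t\overline{v_i}$. Hence
\[
M_{i,t}(z)=\frac{z+c}{1+\overline{c}z}
\]
is a disk automorphism. We would check this directly from the identity
\[
|1+\overline{c}z|^2-|z+c|^2=(1-|c|^2)(1-|z|^2).
\]
This gives $|M_{i,t}(z)|\le1$ whenever $|z|\le1$. The denominator never vanishes on $\Disk$, because $|\overline{c}z|<1$. So $M_{i,t}(\Disk)=\Disk$.

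\emph{Step 2: the affine factor preserves the disk.} The map $A_{i,t}(w)=R_tw+(1-R_t)v_i$ is a convex combination of $w$ and $v_i$ as soon as $0\le R_t\le1$. Since $\Disk$ is convex and $v_i\in\Disk$, this gives $A_{i,t}(\Disk)\subset\Disk$. Geometrically, $A_{i,t}(\Disk)$ is the disk of radius $R_t$ internally tangent to the unit circle at $v_i$.

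\emph{Step 3: the range of $R_t$.} We need $0\le R_t\le1$ for every $t\in[0,1]$. The function $R_t=\tfrac12+t\bigl(2\sqrt3-3-\tfrac12\bigr)$ is affine in $t$, so it suffices to check the endpoints. We have $R_0=1/2$ and $R_1=2\sqrt3-3\approx0.464$, both in $[0,1]$. Composing Steps 1 and 2 then gives $\Phi_{i,t}(\Disk)\subset A_{i,t}(\Disk)\subset\Disk$.

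The main obstacle is only bookkeeping. One must confirm that the formula for $M_{i,t}$ is exactly of the form $(z+c)/(1+\overline{c}z)$ with $c=\alpha_tv_i$, which requires $\alpha_t$ real, and that $|\alpha_t|<1$, which is already noted in the text. Everything else is the standard Blaschke-factor identity together with convexity.
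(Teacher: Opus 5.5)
Your proposal is correct and follows the paper's own proof: factor $\Phi_{i,t}=A_{i,t}\circ M_{i,t}$, note that $M_{i,t}$ is a disk automorphism because $|\alpha_t|<1$, and bound $|R_tw+(1-R_t)v_i|\le1$. You also make explicit two points the paper leaves implicit: the Blaschke identity behind the automorphism claim, and the check $0\le R_t\le1$, which the paper's triangle-inequality step silently needs.
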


\begin{proof}
Because $|\alpha_t|<1$, the map $M_{i,t}$ is a disk automorphism. Hence $M_{i,t}(\Disk)=\Disk$. Now let $w\in\Disk$. Since $|v_i|=1$,
\[
  |A_{i,t}(w)|=|R_t w+(1-R_t)v_i|\le R_t|w|+(1-R_t)|v_i|\le R_t+(1-R_t)=1.
\]
Therefore $A_{i,t}(\Disk)\subset\Disk$, and the same holds for $\Phi_{i,t}=A_{i,t}\circ M_{i,t}$.
\end{proof}

\begin{proposition}[Explicit contraction bound]\label{ap:prop:contraction}
For every $t\in[0,1)$ and every $i\in\{1,2,3\}$, the map $\Phi_{i,t}$ is Lipschitz on $\Disk$ with constant at most
\[
  L_t=R_t\frac{1-\alpha_t}{1+\alpha_t}.
\]
Moreover,
\[
  1-L_t=\frac{(1-t)\bigl(2-(11\sqrt 3-19)t\bigr)}{2\bigl(2-(\sqrt 3-1)t\bigr)}>0,
\]
so $L_t<1$ for all $t<1$ and $L_t\to1$ as $t\to1^{-}$.
\end{proposition}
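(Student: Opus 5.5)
The plan is to bound $|\Phi_{i,t}'|$ on $\Disk$ using the derivative formula \eqref{ap:eq:derivative} already computed in the proof of Proposition~\ref{ap:prop:endpoints}. Since $\Disk$ is convex, the mean value inequality for holomorphic maps, $|\Phi(z)-\Phi(w)|\le \sup_{[z,w]}|\Phi'|\,|z-w|$, then gives the Lipschitz bound. Note that $\alpha_t\le 0$, and the pole of $\Phi_{i,t}$ lies at $z=-1/(\alpha_t\overline{v_i})$, which has modulus $1/|\alpha_t|>1$. So $\Phi_{i,t}$ is holomorphic on a neighbourhood of $\Disk$ and the sup of $|\Phi_{i,t}'|$ is finite.

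\textbf{Bounding the derivative.} From \eqref{ap:eq:derivative},
\[
|\Phi_{i,t}'(z)|=\frac{R_t(1-\alpha_t^2)}{|1+\alpha_t\overline{v_i}z|^2}.
\]
For $|z|\le1$ we have $|\alpha_t\overline{v_i}z|\le|\alpha_t|=-\alpha_t$. Hence $|1+\alpha_t\overline{v_i}z|\ge 1-|\alpha_t|=1+\alpha_t>0$, with equality exactly at $z=v_i$. Therefore
\[
|\Phi_{i,t}'(z)|\le \frac{R_t(1-\alpha_t)(1+\alpha_t)}{(1+\alpha_t)^2}=R_t\frac{1-\alpha_t}{1+\alpha_t}=L_t .
\]
Combined with the mean value inequality on segments in $\Disk$, this gives the Lipschitz constant $L_t$. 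The bound is attained at $v_i$, so it is sharp.

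\textbf{The identity for $1-L_t$.} This is the main, though routine, computation. Write $\beta:=(\sqrt3-1)/2$, so $\alpha_t=-\beta t$, and
\[
L_t=R_t\frac{1+\beta t}{1-\beta t}=R_t\frac{2+(\sqrt3-1)t}{2-(\sqrt3-1)t}.
\]
Also $2R_t=1-t+2t(2\sqrt3-3)=1+(4\sqrt3-7)t$. Putting everything over the common denominator $2(2-(\sqrt3-1)t)$, the numerator of $1-L_t$ is
\[
2\bigl(2-(\sqrt3-1)t\bigr)-\bigl(1+(4\sqrt3-7)t\bigr)\bigl(2+(\sqrt3-1)t\bigr).
\]
This is a quadratic in $t$. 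It vanishes at $t=1$ because Proposition~\ref{ap:prop:endpoints}(b) gives $L_1=1$; I will also verify this directly by expanding. It therefore factors as $(1-t)(2-ct)$. The constant term is $4-2=2$, which fixes the form, and comparing the $t^2$ coefficients identifies $c$. That coefficient is $-(4\sqrt3-7)(\sqrt3-1)=-(19-11\sqrt3)$, which matches $c=11\sqrt3-19$. Checking the linear coefficient as well confirms the factorization. I expect the only real risk to be arithmetic slips with the surds, so I will double-check by evaluating at $t=1$.

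\textbf{Positivity and the limit.} For $t\in[0,1)$ the factor $1-t$ is positive. Since $11\sqrt3-19\approx0.05$, we have $2-(11\sqrt3-19)t>0$. Also $2-(\sqrt3-1)t\ge 3-\sqrt3>0$. Hence $1-L_t>0$, so $L_t<1$. Finally, the factor $1-t$ makes the expression for $1-L_t$ tend to $0$ as $t\to1^-$, so $L_t\to1$.
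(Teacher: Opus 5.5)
Your proof is correct and follows essentially the same route as the paper: the derivative formula \eqref{ap:eq:derivative}, the reverse-triangle bound $|1+\alpha_t\overline{v_i}z|\ge 1+\alpha_t$ on $\Disk$, and the mean value inequality on the convex disk give the Lipschitz constant $L_t$. Your explicit expansion of the numerator as $2-(11\sqrt3-17)t+(11\sqrt3-19)t^2=(1-t)\bigl(2-(11\sqrt3-19)t\bigr)$ correctly supplies the algebra the paper leaves as a ``direct simplification'', and your sharpness remark (equality at $z=v_i$) is a correct extra.
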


\begin{proof}
By \eqref{ap:eq:derivative}, for $z\in\Disk$ we have
\[
  |\Phi_{i,t}'(z)|=R_t\frac{1-\alpha_t^2}{|1+\alpha_t\overline{v_i}z|^2}.
\]
Since $\alpha_t\le 0$ and $|z|\le1$, the reverse triangle inequality gives
\[
  |1+\alpha_t\overline{v_i}z|\ge 1+\alpha_t.
\]
Therefore
\[
  |\Phi_{i,t}'(z)|
  \le R_t\frac{1-\alpha_t^2}{(1+\alpha_t)^2}
  =R_t\frac{1-\alpha_t}{1+\alpha_t}=L_t.
\]
The mean value theorem now yields the Lipschitz bound. The displayed identity for $1-L_t$ is a direct algebraic simplification, and every factor in the numerator and denominator is positive for $0\le t<1$.
\end{proof}

\begin{corollary}[Attractors for $t<1$]\label{ap:cor:attractor}
For every $t\in[0,1)$ there is a unique nonempty compact set $\mathcal A_t\subset\Disk$ satisfying
\[
  \mathcal A_t=\Phi_{1,t}(\mathcal A_t)\cup\Phi_{2,t}(\mathcal A_t)\cup\Phi_{3,t}(\mathcal A_t).
\]
At $t=0$, the set $\mathcal A_0$ is the standard equilateral Sierpi\'nski gasket.
\end{corollary}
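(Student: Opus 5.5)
The plan is to apply Hutchinson's contraction principle on the complete metric space $\K(\Disk)$ of nonempty compact subsets of the closed unit disk, with the Hausdorff metric induced by the Euclidean distance. Completeness holds because $\Disk$ is compact. By Proposition~\ref{ap:prop:diskinv}, each $\Phi_{i,t}$ maps $\Disk$ into itself. So the Hutchinson operator
\[
\HH^{\Phi}_t(X):=\Phi_{1,t}(X)\cup\Phi_{2,t}(X)\cup\Phi_{3,t}(X)
\]
sends $\K(\Disk)$ into itself. Each $\Phi_{i,t}$ is continuous on $\Disk$: it is a M\"obius map whose pole $-1/(\alpha_t\overline{v_i})$ has modulus $1/|\alpha_t|>1$, or it is affine when $\alpha_t=0$. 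Hence the images of compact sets are compact.

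Next I will invoke Proposition~\ref{ap:prop:contraction}. For fixed $t\in[0,1)$, every $\Phi_{i,t}$ is $L_t$-Lipschitz on $\Disk$ with $L_t<1$. The standard estimate $d_H(\bigcup_i f_i(X),\bigcup_i f_i(Y))\le \max_i \operatorname{Lip}(f_i)\,d_H(X,Y)$ then shows that $\HH^{\Phi}_t$ is a strict contraction with constant $L_t$. Banach's fixed point theorem gives existence and uniqueness of a nonempty compact $\mathcal A_t\subset\Disk$ with $\HH^{\Phi}_t(\mathcal A_t)=\mathcal A_t$. Uniqueness holds within $\K(\Disk)$. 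For an invariant compact set not assumed to lie in $\Disk$ one would need an extra argument, but the statement already restricts to subsets of $\Disk$, so nothing more is required.

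For the second claim, Proposition~\ref{ap:prop:endpoints}(a) gives $\Phi_{i,0}(z)=(z+v_i)/2$, which are the three homotheties of ratio $1/2$ centered at the vertices $v_1,v_2,v_3$ of an equilateral triangle $\Delta'$. The standard equilateral Sierpi\'nski gasket is by definition the attractor of this similarity IFS. Its convex hull $\Delta'$ lies in $\Disk$ because the vertices lie on the unit circle, so the gasket belongs to $\K(\Disk)$ and is invariant. By the uniqueness just proved, $\mathcal A_0$ equals it. Alternatively, $\mathcal A_0$ is an affine image of $S=K_0$ from the previous section.

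I do not expect a genuine obstacle here. The only point needing care is that Proposition~\ref{ap:prop:contraction} is stated as a Lipschitz bound on $\Disk$ derived via the mean value theorem. This is legitimate because $\Disk$ is convex, so segments between points of $\Disk$ stay inside it, where the derivative bound holds.
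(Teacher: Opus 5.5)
Your proposal is correct and follows the same route as the paper. It applies Hutchinson's contraction principle on $\mathcal{K}(\Disk)$ with the Hausdorff metric, using disk invariance and the Lipschitz bound $L_t<1$, and gets the $t=0$ case from the endpoint formula $\Phi_{i,0}(z)=(z+v_i)/2$. Your explicit use of uniqueness to identify $\mathcal A_0$ with the standard gasket only spells out a step the paper leaves implicit.
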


\begin{proof}
This is the classical contraction mapping theorem for iterated function systems on the complete metric space of nonempty compact subsets of $\Disk$ endowed with the Hausdorff metric \cite{Hutchinson1981}. The endpoint statement at $t=0$ was proved in \Cref{ap:prop:endpoints}.
\end{proof}

\subsection{Continuity away from the parabolic endpoint}
The loss of uniform contraction at $t=1$ is precisely what makes the Apollonian endpoint subtle. Nevertheless the family is well behaved on each truncated interval.

\begin{proposition}\label{ap:prop:continuity}
Fix $\varepsilon\in(0,1)$. Then there exists $0<c_\varepsilon<1$ such that every generator $\Phi_{i,t}$ with $t\in[0,1-\varepsilon]$ is $c_\varepsilon$-Lipschitz on $\Disk$. Consequently the attractor map
\[
  [0,1-\varepsilon]\ni t\longmapsto \mathcal A_t\in\mathcal{K}(\Disk)
\]
into the compact subsets of $\Disk$ endowed with the Hausdorff metric is continuous.
\end{proposition}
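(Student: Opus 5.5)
The plan is to first get a uniform contraction constant on $[0,1-\varepsilon]$ and then run the standard perturbation argument for attractors of uniformly contracting IFSs.

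\emph{Uniform contraction.} By \Cref{ap:prop:contraction}, $\Phi_{i,t}$ is $L_t$-Lipschitz on $\Disk$. The function $t\mapsto L_t=R_t(1-\alpha_t)/(1+\alpha_t)$ is continuous on $[0,1]$, because $1+\alpha_t\ge 1-(\sqrt3-1)/2>0$. It satisfies $L_t<1$ for every $t<1$. The compact interval $[0,1-\varepsilon]$ is therefore mapped into $[0,1)$, and $c_\varepsilon:=\max_{t\in[0,1-\varepsilon]}L_t<1$ is attained. Alternatively, the explicit formula for $1-L_t$ gives a direct positive lower bound on $1-L_t$ over the interval. This proves the first claim.

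\emph{Uniform convergence of the maps.} Write $\HH_t(X)=\bigcup_i\Phi_{i,t}(X)$ on $\mathcal K(\Disk)$. Each $\HH_t$ with $t\le 1-\varepsilon$ is a $c_\varepsilon$-contraction for the Hausdorff metric $d_H$. The map $(z,t)\mapsto\Phi_{i,t}(z)$ is continuous on the compact set $\Disk\times[0,1]$, since its denominator satisfies $|1+\alpha_t\overline{v_i}z|\ge 1+\alpha_t>0$. It is therefore uniformly continuous, so $\delta(s,t):=\max_i\sup_{z\in\Disk}|\Phi_{i,s}(z)-\Phi_{i,t}(z)|\to0$ as $s\to t$. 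Consequently $d_H(\HH_s(X),\HH_t(X))\le\delta(s,t)$ for every $X\in\mathcal K(\Disk)$.

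\emph{Perturbation estimate.} For $s,t\in[0,1-\varepsilon]$ we use the fixed point equations $\mathcal A_s=\HH_s(\mathcal A_s)$ and $\mathcal A_t=\HH_t(\mathcal A_t)$ to write
\[
d_H(\mathcal A_s,\mathcal A_t)\le d_H(\HH_s(\mathcal A_s),\HH_s(\mathcal A_t))+d_H(\HH_s(\mathcal A_t),\HH_t(\mathcal A_t))\le c_\varepsilon\, d_H(\mathcal A_s,\mathcal A_t)+\delta(s,t).
\]
Rearranging gives $d_H(\mathcal A_s,\mathcal A_t)\le \delta(s,t)/(1-c_\varepsilon)$, which tends to $0$ as $s\to t$, and this proves continuity.

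The only genuinely delicate point is the uniformity of the contraction constant. This is exactly where the restriction $t\le1-\varepsilon$ is needed, since $L_t\to1$ as $t\to1^-$ and the bound $\delta/(1-c_\varepsilon)$ then degenerates. Continuity of $L_t$ together with compactness settles it.
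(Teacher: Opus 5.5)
Your proof is correct and follows the same route as the paper. Continuity of $t\mapsto L_t$ on the compact interval $[0,1-\varepsilon]$ gives the uniform constant $c_\varepsilon<1$, and joint continuity of $(z,t)\mapsto\Phi_{i,t}(z)$ plus fixed-point stability for uniformly contracting Hutchinson operators gives continuity of $t\mapsto\mathcal A_t$; the only difference is that you write out the stability estimate $d_H(\mathcal A_s,\mathcal A_t)\le\delta(s,t)/(1-c_\varepsilon)$ explicitly, where the paper simply invokes it as standard.
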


\begin{proof}
By \Cref{ap:prop:contraction}, the function $t\mapsto L_t$ is continuous and strictly less than $1$ on the compact interval $[0,1-\varepsilon]$, so it attains a maximum $c_\varepsilon<1$ there. Thus the Hutchinson operators
\[
  \mathcal{H}_t(K)=\Phi_{1,t}(K)\cup\Phi_{2,t}(K)\cup\Phi_{3,t}(K)
\]
are uniformly contracting on $\mathcal{K}(\Disk)$ for $t\in[0,1-\varepsilon]$. Since the maps $\Phi_{i,t}$ depend continuously on $(z,t)$, the operators $\mathcal{H}_t$ depend continuously on $t$ in the Hausdorff metric. Standard fixed-point stability for contractions therefore implies continuity of the unique fixed point $\mathcal A_t$.
\end{proof}

\begin{proof}[Proof of \Cref{ap:thm:main}]
Part~\textup{(i)} follows from \Cref{ap:prop:endpoints,ap:cor:attractor}. Part~\textup{(ii)} follows from the parabolic endpoint calculation in \Cref{ap:prop:endpoints} together with the normalization described immediately after it: after the Euclidean rotation $z\mapsto-\ii z$, the three endpoint maps are the standard equilateral Apollonian generators. Part~\textup{(iii)} is the combination of disk invariance, the explicit contraction bound, and the contraction-mapping argument in \Cref{ap:prop:diskinv,ap:prop:contraction,ap:cor:attractor}. Finally, part~\textup{(iv)} is \Cref{ap:prop:continuity}.
\end{proof}

\subsection{A boundary-normalized Apollonian endpoint}\label{ap:subsec:boundary-normalized}
The equilateral disk model is convenient for the deformation, but there is a second normalization of the endpoint which is better adapted to comparison with the Rauzy side.  In this normalization the distinguished side is the real segment $[0,1]$.

Set
\[
  \lambda:=2+\sqrt3,
  \qquad
  B(z):=\frac{1+\lambda\ii}{2}\,\frac{z-v_2}{z+\lambda\ii},
\]
and define
\[
  \mathcal A^{\flat}:=B(\mathcal A_1).
\]
We call $\mathcal A^{\flat}$ the boundary-normalized Apollonian endpoint.

\begin{theorem}[Boundary-normalized Apollonian model]\label{ap:thm:boundary-model}
Let $p=(1+\ii)/2$. The following statements hold.
\begin{enumerate}[label=(\roman*)]
\item The M\"obius map $B$ sends
\[
  v_2\longmapsto 0,
  \qquad
  v_3\longmapsto 1,
  \qquad
  v_1\longmapsto p.
\]
Moreover it sends the side circle
\[
  \mathcal C_{23}:=\{z\in\C: |z+2\ii|=\sqrt3\}
\]
to the real line, and the side arc from $v_2$ to $v_3$ not containing the pole of $B$ to the segment $[0,1]$.
\item Up to the relabelling $0\leftrightarrow 2$, $1\leftrightarrow 3$, and $\ast\leftrightarrow 1$, the endpoint generators are conjugated by $B$ to
\begin{equation}\label{ap:eq:boundary-generators}
  G_0(z)=\frac{z}{z+1},
  \qquad
  G_1(z)=\frac{1}{2-z},
  \qquad
  G_\ast(z)=\frac{z-\ii}{2z-1-2\ii}.
\end{equation}
That is,
\[
  G_0=B\circ\Phi_{2,1}\circ B^{-1},
  \qquad
  G_1=B\circ\Phi_{3,1}\circ B^{-1},
  \qquad
  G_\ast=B\circ\Phi_{1,1}\circ B^{-1}.
\]
Consequently
\[
  \mathcal A^{\flat}=G_0(\mathcal A^{\flat})\cup G_1(\mathcal A^{\flat})\cup G_\ast(\mathcal A^{\flat}).
\]
\item The three side circles of the equilateral Apollonian endpoint are mapped to
\[
  \R\cup\{\infty\},
  \qquad
  \partial\overline{D}\!\left(\frac{\ii}{2},\frac12\right),
  \qquad
  \partial\overline{D}\!\left(1+\frac{\ii}{2},\frac12\right).
\]
Thus $\mathcal A^{\flat}$ is exactly the Apollonian gasket in the upper-half-plane normalization whose base side is $[0,1]$ and whose two adjacent outer circles have radius $1/2$ and centers $\ii/2$ and $1+\ii/2$.
\item On the distinguished side one has
\[
  G_0([0,1])=[0,1/2],
  \qquad
  G_1([0,1])=[1/2,1],
\]
and therefore
\[
  [0,1]=G_0([0,1])\cup G_1([0,1]).
\]
\end{enumerate}
\end{theorem}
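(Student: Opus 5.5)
The plan is to verify everything by explicit M\"obius computation, organised so that the invariance statement in (ii) and the geometric statements in (iii) and (iv) follow from a small number of checks on three-point data.

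\textbf{Step 1: part (i).} Evaluate $B$ at the three vertices directly.
\begin{itemize}
\item $B(v_2)=0$ is immediate from the factor $z-v_2$.
\item For $v_3$ and $v_1$, substitute $\lambda=2+\sqrt3$ and simplify, using $\lambda^2-4\lambda+1=0$.
\end{itemize}
For the side circle $\mathcal C_{23}$, first check that $v_2,v_3\in\mathcal C_{23}$. Then check that the pole $-\lambda\ii$ also lies on it, since $|-\lambda\ii+2\ii|=\sqrt3$. A circle through the pole is sent to a line, and this line contains $0$ and $1$, so it is $\R$.

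The arc from $v_2$ to $v_3$ avoiding the pole is connected and omits $\infty$ from its image. Its image is therefore the bounded component of $\R$ minus the images of the endpoints, namely $[0,1]$.

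\textbf{Step 2: part (ii).} The cleanest route is to compare fixed points and derivative data rather than multiply $2\times2$ matrices blindly.
\begin{itemize}
\item $\Phi_{2,1}$ is parabolic with fixed point $v_2$ by \Cref{ap:prop:endpoints}, so $B\Phi_{2,1}B^{-1}$ is parabolic fixing $0$.
\item $G_0(z)=z/(z+1)$ is also parabolic fixing $0$. A parabolic map fixing a given point is determined by one further image point.
\item So it suffices to check one further point, say $\Phi_{2,1}(v_3)$ against $G_0(1)=1/2$.
\end{itemize}
Here $\Phi_{2,1}(v_3)$ should be the tangency point $m_{23}$ of the side circle. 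This can be computed from \eqref{ap:eq:family} with $R_1=2\sqrt3-3$ and $\alpha_1=(1-\sqrt3)/2$; then check that $B$ sends it to $1/2$. The same argument handles $G_1$, which is parabolic at $1$ with $G_1(0)=1/2$, and $G_\ast$, which is parabolic at $p$; for $G_\ast$, check the image of one other vertex.

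Alternatively, if the one-point check is awkward, write the matrix of $B$ and verify the product identity $B\,\Phi_{i,1}=G\,B$ as an equality of matrices up to scalar. The invariance of $\mathcal A^\flat$ then follows by applying $B$ to the self-similarity of $\mathcal A_1$. Here I take $\mathcal A_1$ to be the invariant set of the endpoint maps, as in the remark following \Cref{ap:prop:endpoints}.

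\textbf{Step 3: parts (iii) and (iv).} The three side circles are the circles through pairs of vertices, tangent at the vertices to the adjacent side circles.
\begin{itemize}
\item $\mathcal C_{23}\mapsto\R\cup\{\infty\}$ comes from Step 1.
\item The circle $\mathcal C_{12}$ through $v_1,v_2$ maps to a circle through $0$ and $p$. It is tangent to $\R$ at $0$, because M\"obius maps preserve tangency. The unique such circle is $\partial D(\ii/2,1/2)$. The case of $\mathcal C_{13}$ is symmetric and gives $\partial D(1+\ii/2,1/2)$.
\item This identifies $\mathcal A^\flat$ as the stated upper-half-plane Apollonian gasket, since M\"obius maps carry the gasket bounded by three mutually tangent circles to the one bounded by their images.
\end{itemize}
Part (iv) is then immediate: $G_0$ and $G_1$ are monotone on $[0,1]$, with $G_0(0)=0$, $G_0(1)=1/2$, $G_1(0)=1/2$, and $G_1(1)=1$.

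\textbf{Main obstacle.} The main obstacle is the algebra in Step 2, in particular pinning down $G_\ast$ and confirming the relabelling of indices. I would first try the fixed-point plus one-image characterisation, and fall back to direct matrix multiplication with $\lambda^2=4\lambda-1$.
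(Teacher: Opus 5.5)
Your proposal is correct. Parts (i), (iii) and (iv) follow essentially the paper's proof. You evaluate $B$ at the vertices and observe that the pole $-\lambda\ii$ lies on $\mathcal C_{23}$. You then identify the two outer circles by preservation of tangency together with uniqueness of the circle tangent to $\R$ at $0$ (resp.\ $1$) through $p$. Your connectedness argument for the arc replaces the paper's test point $-(2-\sqrt3)\ii\mapsto 1/2$, and is equally valid.

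The genuine difference is in part (ii). The paper simply asserts the three conjugacies ``by direct substitution using the formula for $B^{-1}$''. You instead use a rigidity fact: the parabolic M\"obius maps with a prescribed fixed point $a$, together with the identity, form a group isomorphic to $(\C,+)$ acting simply transitively on $(\C\cup\{\infty\})\setminus\{a\}$. Hence $B\Phi_{i,1}B^{-1}$ is determined by its fixed point $B(v_i)$ and a single further image.

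Your one-point checks do go through. In reduced form,
\[
\Phi_{i,1}(z)=\frac{(\sqrt3-1)z+v_i}{-\overline{v_i}z+\sqrt3+1}.
\]
One finds $\Phi_{2,1}(v_3)=\Phi_{3,1}(v_2)=-(2-\sqrt3)\ii$; the reflection $z\mapsto-\bar z$ exchanges the two computations. $B$ sends this point to $1/2=G_0(1)=G_1(0)$, which is the same point the paper uses to test the arc in (i). For the third branch, $\Phi_{1,1}(v_2)=\tfrac{2-\sqrt3}{2}(-\sqrt3+\ii)$, and its image under $B$ is $(2+\ii)/5=G_\ast(0)$.

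Your route reduces the verification to three point evaluations, each geometrically meaningful as a contact point of the inscribed circle with a side. It also explains the relabelling transparently through the fixed points. The paper's route needs no structural input beyond the formulas, but it leaves an unshown $2\times 2$ matrix identity that the reader has to redo.
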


\begin{proof}
The first two images of vertices are immediate from the definition of $B$.  For the third, substituting $v_1=\ii$ gives
\[
  B(v_1)=\frac{1+\lambda\ii}{2}\,
        \frac{\ii-v_2}{\ii+\lambda\ii}
       =\frac{1+\ii}{2}=p.
\]
The pole of $B$ is $-\lambda\ii=-(2+\sqrt3)\ii$.  Since
\[
  \bigl|-(2+\sqrt3)\ii+2\ii\bigr|=\sqrt3,
\]
this pole lies on $\mathcal C_{23}$.  Hence $B$ maps $\mathcal C_{23}$ to a generalized circle through $0$, $1$, and $\infty$, namely the real line.  The arc from $v_2$ to $v_3$ not containing the pole is mapped to $[0,1]$; for instance the other point of $\mathcal C_{23}$ on the imaginary axis, $-(2-\sqrt3)\ii$, is mapped to $1/2$.

A direct substitution using the formula for $B^{-1}$ gives
\[
  B\circ\Phi_{2,1}\circ B^{-1}(z)=\frac{z}{z+1},
  \qquad
  B\circ\Phi_{3,1}\circ B^{-1}(z)=\frac{1}{2-z},
\]
and
\[
  B\circ\Phi_{1,1}\circ B^{-1}(z)=\frac{z-\ii}{2z-1-2\ii}.
\]
This proves the conjugacy formulas and the invariance equation for $\mathcal A^{\flat}$.

It remains only to identify the two adjacent side circles.  In the equilateral endpoint the three side circles are
\[
  \mathcal C_{ij}:=\{z\in\C: |z+2v_k|=\sqrt3\},
  \qquad \{i,j,k\}=\{1,2,3\}.
\]
They are pairwise tangent, since their centers are $-2v_1,-2v_2,-2v_3$ and their common radius is $\sqrt3$.  The conjugacy formulas above show that $\mathcal C_{23}=B^{-1}(\R\cup\{\infty\})$ is invariant under the two endpoint branches based at $v_2$ and $v_3$; by cyclic symmetry the same statement holds for the other two side circles.  The circle $\mathcal C_{23}$ has already been sent to the real line.  The two remaining side circles are therefore sent to two circles tangent to the real line at $0$ and $1$ and tangent to each other at $p=(1+\ii)/2$.  The unique such pair is
\[
  \partial\overline{D}\!\left(\frac{\ii}{2},\frac12\right),
  \qquad
  \partial\overline{D}\!\left(1+\frac{\ii}{2},\frac12\right).
\]
Finally, the interval formulas follow directly from \eqref{ap:eq:boundary-generators}: for $x\in[0,1]$,
\[
  G_0(x)=\frac{x}{x+1}\in[0,1/2],
  \qquad
  G_1(x)=\frac{1}{2-x}\in[1/2,1],
\]
and the two ranges meet only at $1/2$.
\end{proof}

\begin{corollary}[Canonical Rauzy--Apollonian identification fixing the base side]\label{ap:cor:rauzy-apollonian-homeo}
Let $R=K_1$ be the Rauzy gasket in the simplex chart of \cref{rem:rauzyendpoint}, and let
\[
  J:=\{(1-x,x):0\le x\le1\}\subset T
\]
be the side joining $e_2$ to $e_3$.  There is a canonical homeomorphism
\[
  \Theta:R\longrightarrow \mathcal A^{\flat}
\]
obtained by matching the common Sierpi\'nski address quotient.  Moreover, under the coordinate $x$ on $J$,
\[
  \Theta(1-x,x)=x\qquad(0\le x\le1).
\]
\end{corollary}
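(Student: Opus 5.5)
The plan is to build $\Theta$ exactly as $H_t$ was built in \cref{cor:homeos}: code $\mathcal A^{\flat}$ by $\Sigma$ and show that its coding relation is the Sierpi\'nski relation $\sim$. We relabel the boundary generators as in \cref{ap:thm:boundary-model}(ii): $G_\ast$ carries the symbol $1$, $G_0$ the symbol $2$, and $G_1$ the symbol $3$. This matches the Rauzy branches $f_{1,1},f_{2,1},f_{3,1}$, which are based at $e_1,e_2,e_3$. Let $\Delta^\flat$ be the closed curvilinear triangle in $\overline{\mathbb H}$ bounded by $[0,1]$ and the two circles of radius $1/2$ from \cref{ap:thm:boundary-model}(iii), together with the region enclosed by the three arcs. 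We define
\[
\pi^\flat(\omega):=\bigcap_{n}G_{\omega|_n}(\Delta^\flat).
\]

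\textbf{Step 1 (the Apollonian coding map).} We must show that $\diam G_{\omega|_n}(\Delta^\flat)\to0$ for every $\omega$. This is the main obstacle, because there is no uniform contraction at the parabolic endpoint. We would first try to reduce it to the classical fact that the nested curvilinear triangles of an Apollonian packing shrink to points \cite{McMullen1998}. For a direct argument we would split into two cases.
\begin{itemize}
\item If $\omega$ is eventually constant, the cylinders converge to the parabolic fixed point. The same computation as in \cref{prop:constantaddress} gives polynomial collapse; for instance $G_0^n(z)=z/(nz+1)$.
\item Otherwise, every block containing two distinct letters is a uniform strict contraction on $\Delta^\flat$, since such a product of the generators is loxodromic. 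It maps $\Delta^\flat$ into a compact subset away from the parabolic points, and a Koebe-type distortion bound then controls the shrinking.
\end{itemize}
The resulting map $\pi^\flat$ is continuous. It is onto $\mathcal A^\flat$ by the invariance equation in \cref{ap:thm:boundary-model}(ii), since $\mathcal A^\flat=\pi^\flat(\Sigma)$.

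\textbf{Step 2 (the coding relation is $\sim$).} We copy the proof of \cref{thm:addressrelation}. The three first-level cells $G_\ast(\Delta^\flat)$, $G_0(\Delta^\flat)$ and $G_1(\Delta^\flat)$ are curvilinear triangles inside $\Delta^\flat$. Two of them meet only at one tangency point, which is the image of a vertex: for example $G_0(1)=G_1(0)=1/2$. Each vertex lies in exactly one first-level cell and is fixed by the generator of that cell, so its coding is unique, namely $j^\infty$. The maximal-common-prefix argument then shows $\pi^\flat(\omega)=\pi^\flat(\eta)\iff\omega\sim\eta$. Together with \cref{rem:rauzyendpoint}, both $\pi_1$ and $\pi^\flat$ descend to homeomorphisms $Q\to R$ and $Q\to\mathcal A^\flat$. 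We set
\[
\Theta:=\bar\pi^\flat\circ\bar\pi_1^{-1}.
\]

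\textbf{Step 3 (the base side).} Parametrize $J$ by $x$ through the point $(u,v)=(1-x,x)$. From \eqref{eq:f2chart} and \eqref{eq:f3chart} at $t=1$:
\[
f_{2,1}(1-x,x)=\Bigl(\tfrac{1}{1+x},\tfrac{x}{1+x}\Bigr),\qquad f_{3,1}(1-x,x)=\Bigl(\tfrac{1-x}{2-x},\tfrac{1}{2-x}\Bigr).
\]
So $f_{2,1}$ and $f_{3,1}$ preserve $J$ and act on the coordinate $x$ by
\[
x\mapsto\frac{x}{1+x}=G_0(x),\qquad x\mapsto\frac{1}{2-x}=G_1(x).
\]
Hence for $\omega\in\{2,3\}^{\N}$ the point $\pi_1(\omega)$ lies in $J$. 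Its $x$-coordinate is $\bigcap_n G_{\omega|_n}([0,1])$, which by \cref{ap:thm:boundary-model}(iv) is exactly the point $\pi^\flat(\omega)\in[0,1]$. Every point of $J$ has such an address, since the cells $f_{2,1}(J)$ and $f_{3,1}(J)$ cover $J$, and likewise every point of $[0,1]$ has one in $\mathcal A^\flat$. It follows that $\Theta(1-x,x)=x$ on $J$.
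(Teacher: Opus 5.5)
Your proposal follows the paper's proof essentially step for step: coding $\mathcal A^{\flat}$ by nested cylinders, proving shrinking via the parabolic normal forms for eventually constant words and via contraction away from the neutral fixed points otherwise, rerunning the maximal-common-prefix argument to get the Sierpi\'nski relation $\sim$, and identifying $f_{2,1},f_{3,1}$ on $J$ with $G_0,G_1$ in the coordinate $x$. The one point to tighten is the uniform contraction of blocks containing two distinct letters: it does not follow from loxodromicity, but it does follow from $|G_c'|\le 1$ on $\Delta^\flat$ for each generator together with $\sup_{G_b(\Delta^\flat)}|G_a'|<1$ for $a\neq b$ (the paper also only sketches this step).
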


\begin{proof}
The Rauzy endpoint is coded by the standard Sierpi\'nski quotient $Q=\Sigma/\!\sim$ by \cref{rem:rauzyendpoint}.  The boundary-normalized Apollonian endpoint has the same quotient: the three first-level curvilinear cells have pairwise intersections at the three vertices, and by applying the M\"obius maps $G_0,G_1,G_\ast$ the same incidence pattern holds in every cylinder.  The diameters of the cylinder cells tend to zero.  If an infinite word is eventually constant, this follows from the parabolic normal forms; for instance
\[
  G_0^n(z)=\frac{z}{nz+1},
  \qquad
  G_1^n(z)=1-\frac{1-z}{n(1-z)+1},
\]
and the third branch is conjugate to the same form at the fixed point $p$.  If the word changes symbol infinitely often, then along arbitrarily long prefixes the corresponding cell is mapped into a compact subcell away from the neutral fixed point, where the spherical derivatives are uniformly smaller than one.  Hence the only multiple addresses are the usual vertex identifications
\[
  wij^\infty\sim wji^\infty\qquad(i\ne j),
\]
so $\mathcal A^{\flat}$ is also canonically homeomorphic to $Q$.  Composing the two quotient parametrizations gives $\Theta$.

It remains to check the statement on the distinguished side.  Parametrize $J$ by $\theta(x)=(1-x,x)$.  At the Rauzy endpoint $t=1$, the restrictions of the two side branches are
\[
  f_{2,1}(\theta(x))=\theta\!\left(\frac{x}{x+1}\right),
  \qquad
  f_{3,1}(\theta(x))=\theta\!\left(\frac{1}{2-x}\right).
\]
These are exactly the restrictions of $G_0$ and $G_1$ to $[0,1]$.  Therefore the address map on the Rauzy side and the address map on the Apollonian side are the same one-dimensional coding map.  The quotient-matching homeomorphism is consequently the identity on the side $[0,1]$.
\end{proof}

\begin{figure}[t]
  \centering
  \includegraphics[width=.90\textwidth]{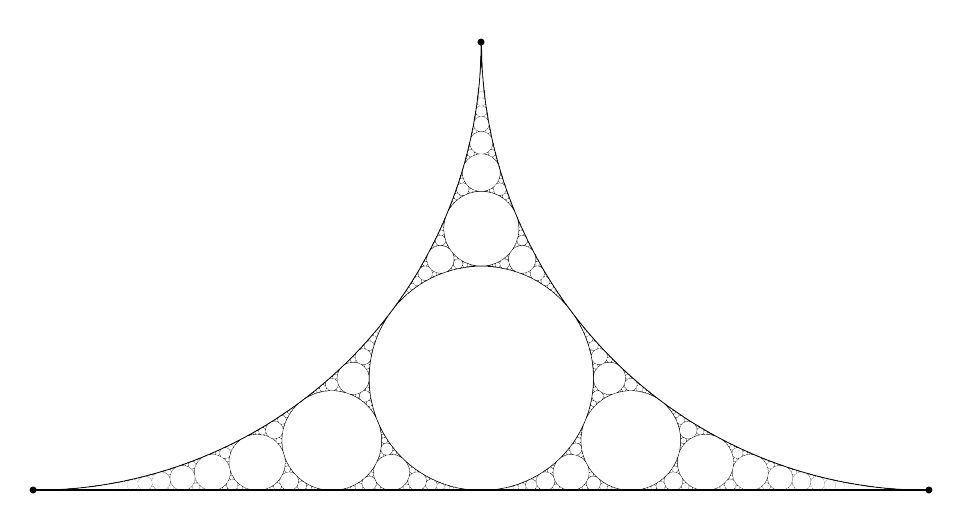}
  \caption{High-depth finite-level approximation of the boundary-normalized Apollonian endpoint $\mathcal A^{\flat}$.  The distinguished side is the segment $[0,1]$; the two adjacent outer circles have centers $\ii/2$ and $1+\ii/2$ and radius $1/2$.  The figure is generated directly from exact M\"obius images of the boundary arcs in \eqref{ap:eq:boundary-generators}.}
  \label{ap:fig:boundary-normalized}
\end{figure}

\begin{remark}[A side-fixed visual interpolation]\label{ap:rem:side-fixed-interpolation}
The boundary-normalized endpoint also gives a compact formula for the supplementary side-fixed visualization.  Let $p=(1+\ii)/2$. Put
\[
  T^{\flat}:=\operatorname{conv}\{0,1,p\}.
\]
For $0\le s\le1$ define
\[
  H_{0,s}(z)=\frac{z}{2-s+sz},
  \qquad
  H_{1,s}(z)=1-\frac{1-z}{2-s+s(1-z)},
\]
and
\[
  H_{\ast,s}(z)
  =p+\frac{z-p}{2-s+2\ii s(z-p)}.
\]
At $s=0$ these are the affine Sierpi\'nski branches on $T^{\flat}$,
\[
  z\mapsto \frac z2,
  \qquad
  z\mapsto \frac{1+z}{2},
  \qquad
  z\mapsto \frac{z+p}{2},
\]
while at $s=1$ they are exactly the boundary-normalized Apollonian generators $G_0,G_1,G_\ast$ from \eqref{ap:eq:boundary-generators}.  The restrictions of $H_{0,s}$ and $H_{1,s}$ to the base segment interpolate through the same fractional transformations that appear on the distinguished Rauzy side.  This side-fixed interpolation is used in the interactive companion to display the boundary-normalized comparison; the formal results above do not require any additional dynamical assertion about this auxiliary visualization.
\end{remark}

\begin{remark}
Finite-level computations strongly suggest that $\mathcal A_t\to \mathcal A_1$ in the Hausdorff metric as $t\to1^{-}$, where $\mathcal A_1$ is the classical equilateral Apollonian gasket generated by the parabolic endpoint maps $\Phi_{i,1}$. This endpoint continuity is not used in the results proved below.
\end{remark}

\subsection{Dimension theory of the hyperbolic fibers}
For $t\in[0,1)$ let
\[
  \Sigma=\{1,2,3\}^{\mathbb N},
  \qquad
  \sigma((x_n)_{n\ge1})=(x_{n+1})_{n\ge1},
\]
and let $\pi_t:\Sigma\to \mathcal A_t$ denote the coding map
\[
  \pi_t(x)=\lim_{n\to\infty} \Phi_{x_1,t}\circ\cdots\circ\Phi_{x_n,t}(0).
\]
Let $\mathcal P=\{[1],[2],[3]\}$ be the one-step cylinder partition of $\Sigma$, and let $\gamma_{\C}$ denote the Borel $\sigma$-algebra of $\C$. For $m\in M_\sigma(\Sigma)$ define the projection entropy
\[
  h_{\pi_t}(\sigma,m)
  := H_m(\mathcal P\mid \sigma^{-1}\pi_t^{-1}\gamma_{\C})
   - H_m(\mathcal P\mid \pi_t^{-1}\gamma_{\C}),
\]
and the Lyapunov exponent
\[
  \chi_t(m)
  := -\int_\Sigma \log\bigl|\Phi'_{x_1,t}(\pi_t(\sigma x))\bigr|\,dm(x).
\]

\begin{theorem}[Exact Hausdorff dimension of every hyperbolic fiber]\label{ap:thm:exact-dim}
For every $t\in[0,1)$ the family $\{\Phi_{1,t},\Phi_{2,t},\Phi_{3,t}\}$ is a conformal IFS on the closed unit disk. Consequently:
\begin{enumerate}[label=(\roman*)]
\item for every ergodic $m\in M_\sigma(\Sigma)$, the projected measure $\mu_{t,m}:=m\circ\pi_t^{-1}$ is exact-dimensional and
\[
  \dimH \mu_{t,m}=\frac{h_{\pi_t}(\sigma,m)}{\chi_t(m)};
\]
\item the attractor itself satisfies the exact variational formula
\[
  \dimH \mathcal A_t = \dim_B \mathcal A_t
  = \sup_{m\in M_\sigma(\Sigma)} \frac{h_{\pi_t}(\sigma,m)}{\chi_t(m)}
  = \sup_{\substack{m\in M_\sigma(\Sigma)\\ m\text{ ergodic}}} \dimH\mu_{t,m}.
\]
\end{enumerate}
\end{theorem}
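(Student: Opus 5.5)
The proof will first verify that for each fixed $t\in[0,1)$ the three maps $\Phi_{i,t}$ form a conformal IFS, and then quote the dimension theory of conformal IFSs with overlaps, namely Feng--Hu's projection-entropy theory, in its conformal (Mauldin--Urbański/Patzschke) form. No new dimension estimates are needed.

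\textbf{Conformal IFS hypotheses.} Each $\Phi_{i,t}=A_{i,t}\circ M_{i,t}$ is a M\"obius map. Its pole is $-1/(\alpha_t\overline{v_i})$, which has modulus $1/|\alpha_t|\ge 2/(\sqrt3-1)>1$. So $\Phi_{i,t}$ extends to a conformal diffeomorphism of a disk $U=D(0,1+\delta)$ with $\delta>0$ chosen uniformly in $i$ (and locally uniformly in $t$). By \Cref{ap:prop:diskinv}, $\Phi_{i,t}(\Disk)\subset\Disk$.

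By \Cref{ap:prop:contraction}, $|\Phi_{i,t}'|\le L_t<1$ on $\Disk$. By continuity of \eqref{ap:eq:derivative}, after shrinking $\delta$ we still have $|\Phi_{i,t}'|<1$ on $\overline U$. We also need $\Phi_{i,t}(\overline U)\subset U$. For this I would use that $\Phi_{i,t}$ maps $\Disk$ into $\Disk$ and is $L_t$-Lipschitz near $\Disk$, so the $\delta$-neighbourhood goes into the $L_t\delta$-neighbourhood. This requires the derivative bound on the slightly larger disk, which holds for small $\delta$ since $L_t<1$ strictly.

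Two further properties are needed. First, the derivative is nonvanishing on $\overline U$, which is clear from \eqref{ap:eq:derivative}. Second, bounded distortion holds: $\log|\Phi_{i,t}'|$ is real-analytic on $\overline U$, hence Lipschitz, and the standard telescoping along compositions, combined with the uniform contraction $L_t^n$, gives a uniform distortion constant for all words. This is the precise definition of a conformal IFS (without any separation condition) used by Feng--Hu.

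\textbf{Invoking the theory.} Part (i) is then Feng--Hu's theorem for $C^1$ conformal IFSs. For ergodic $m$, the measure $m\circ\pi_t^{-1}$ is exact-dimensional with dimension $h_{\pi_t}(\sigma,m)/\chi_t(m)$, where $\chi_t(m)>0$ because $|\Phi'|\le L_t$. I must check that the paper's definition of $h_{\pi_t}$ (via the partition $\mathcal P$ and $\pi_t^{-1}\gamma_{\C}$) and of $\chi_t$ agree with theirs. They do verbatim, with the Lyapunov exponent written via the chain rule along $\pi_t(\sigma x)$.

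Part (ii) is their variational principle: $\dimH\mathcal A_t=\dim_B\mathcal A_t=\sup_m h_{\pi_t}/\chi_t$, with the supremum attainable over ergodic measures. To pass from all invariant $m$ to ergodic ones, I would use the ergodic decomposition. Projection entropy is affine in $m$ by Feng--Hu, and $\chi_t$ is affine, so the ratio of an invariant measure is bounded by the supremum over its ergodic components. Equivalently, I would cite their statement directly. The last equality is then part (i).

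\textbf{Main obstacle.} The delicate point is that Feng--Hu's conformal results are stated for maps defined on an open neighbourhood $U$ of the compact invariant set, with $\Phi_i(\overline U)\subset U$ and uniform contraction there. The first paragraph is arranged to secure exactly this. Its only real input is the strict inequality $L_t<1$ from \Cref{ap:prop:contraction}, which fails at $t=1$. That failure is why the theorem is restricted to $t<1$. If the extension to $U$ turns out to be awkward, a fallback is to work on $\Disk$ itself, using that $\Disk$ is convex and compact with nonempty interior. Many formulations, including Patzschke's and Mauldin--Urbański's, only require the maps to be conformal on a neighbourhood of a compact convex set that they map into itself, which the pole estimate gives immediately.
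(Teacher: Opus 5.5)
Your proposal is correct and takes essentially the same route as the paper: check the conformal-IFS hypotheses (pole outside $\Disk$, holomorphic extension, nonvanishing derivative, contraction from $L_t<1$), then cite Feng--Hu's exact-dimensionality and variational theorems for parts (i) and (ii). Your explicit choice of $U=D(0,1+\delta)$ with $\Phi_{i,t}(\overline U)\subset U$ and uniform contraction on $U$, and your ergodic-decomposition remark for restricting the supremum to ergodic measures, make explicit details that the paper's proof leaves implicit.
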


\begin{proof}
Fix $t<1$. By \Cref{ap:prop:contraction}, each $\Phi_{i,t}$ is a strict contraction of $\Disk$. If $\alpha_t=0$, then $\Phi_{i,t}$ is affine and entire. If $\alpha_t\ne0$, then $\Phi_{i,t}$ is a M\"obius map with pole at $-1/(\alpha_t\overline{v_i})$, which lies strictly outside $\Disk$ because $|\alpha_t|<1$. Hence, in every case, each branch extends holomorphically to an open neighborhood of $\Disk$. Its derivative is given by \eqref{ap:eq:derivative}, so it never vanishes there. Therefore the family is a conformal IFS in the sense of Feng and Hu \cite[Definition~2.9]{FengHu2009}. Part~(i) is exactly \cite[Theorem~2.8]{FengHu2009}, and part~(ii) is \cite[Theorem~2.13]{FengHu2009}.
\end{proof}

\begin{corollary}[The Sierpi\'nski endpoint]\label{ap:cor:sierp-dim}
At $t=0$ one has
\[
  \dimH(\mathcal A_0)=\frac{\log 3}{\log 2}.
\]
\end{corollary}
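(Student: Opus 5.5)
At $t=0$, \Cref{ap:prop:endpoints}(a) gives the three branches explicitly as $\Phi_{i,0}(z)=(z+v_i)/2$. These are similarities of $\C$ with common ratio $1/2$, and \Cref{ap:cor:attractor} identifies $\mathcal A_0$ as the standard equilateral Sierpi\'nski gasket with vertices $v_1,v_2,v_3$. The natural plan is therefore to fall back on Moran's formula rather than evaluate the variational formula of \Cref{ap:thm:exact-dim}. We take that theorem only as a consistency check: at $t=0$ one has $|\Phi'_{i,0}|\equiv 1/2$, so $\chi_0(m)=\log 2$ for every invariant measure $m$.

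First, we verify the open set condition with $U$ the interior of the closed triangle $\operatorname{conv}\{v_1,v_2,v_3\}$. Each $\Phi_{i,0}$ is the homothety of ratio $1/2$ centred at $v_i$. Hence $\Phi_{i,0}(U)$ is the open corner triangle at $v_i$ with vertices $v_i$ and the two midpoints $(v_i+v_j)/2$. These three open triangles lie in $U$ and are pairwise disjoint, because the closed corner triangles meet only at the midpoints. This is the complex-plane analogue of the decomposition $T=T_1\cup T_2\cup T_3\cup H$ in \Cref{prop:firstlevel}, and it is in fact affinely equivalent to it at $t=0$.

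Second, Moran's equation $3\cdot(1/2)^s=1$ has the unique solution $s=\log 3/\log 2$. Hutchinson's theorem \cite{Hutchinson1981} then gives $\dimH\mathcal A_0=\log3/\log2$.

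Alternatively, one can stay inside the framework of \Cref{ap:thm:exact-dim}. Take $m$ to be the uniform Bernoulli measure. Under the open set condition, $\pi_0$ is injective off the countable set of vertex-type addresses described in \Cref{thm:addressrelation}. This gives $h_{\pi_0}(\sigma,m)=\log 3$, so the supremum is at least $\log3/\log2$. The matching upper bound comes from the trivial estimate $h_{\pi_0}(\sigma,m)\le h(\sigma,m)\le\log3$ together with $\chi_0(m)=\log2$.

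The only point that needs care is the computation of the projection entropy of the uniform Bernoulli measure. For that reason we prefer the direct Moran/open-set-condition route, where there is no real obstacle.
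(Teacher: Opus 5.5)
Your proposal is correct and takes the same approach as the paper: at $t=0$ the branches are the similarities $z\mapsto(z+v_i)/2$ of ratio $1/2$, they are separated on the open triangle interior, and the classical similarity-dimension formula (Moran's equation $3\cdot(1/2)^s=1$) gives $\log3/\log2$. Your explicit check of the open set condition, in which the closed corner triangles meet only at the midpoints, supplies exactly the separation the paper invokes; it is also the precise form of what the paper loosely calls ``strong separation'', since the first-level cells do touch at the midpoints.
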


\begin{proof}
At $t=0$ the branches are the three similarities $z\mapsto(z+v_i)/2$ with strong separation on the open triangle interior. Hence the classical similarity-dimension formula applies.
\end{proof}

\begin{corollary}[The Apollonian endpoint]\label{ap:cor:apol-dim}
At $t=1$ one has
\[
  \dimH(\mathcal A_1)=1.3056867280498771846459862068510408911060\ldots \pm 10^{-129}.
\]
\end{corollary}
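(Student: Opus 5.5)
The plan is not to compute anything new. Instead I would identify $\mathcal A_1$, up to a M\"obius change of coordinates, with the classical Apollonian gasket of the McMullen and Vytnova--Wormell normalizations. Then I would transfer the known rigorous high-precision value of its Hausdorff dimension.

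\emph{Step 1: reduce $\mathcal A_1$ to the classical gasket.} By \Cref{ap:thm:main}(ii) and the remark following \Cref{ap:prop:endpoints}, after the rotation $z\mapsto -\ii z$ the generators $\Phi_{i,1}$ become the standard parabolic generators
\[
z\mapsto \frac{(\sqrt3-1)z+1}{-z+\sqrt3+1}
\]
and its $2\pi/3$-rotates. Their limit set is the residual set of the equilateral Apollonian circle packing bounded by the three mutually tangent side circles $\mathcal C_{ij}$. Equivalently, one can pass through the boundary model of \Cref{ap:thm:boundary-model}: $\mathcal A^\flat=B(\mathcal A_1)$ is the standard upper-half-plane gasket on $[0,1]$. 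Either way, $\mathcal A_1$ is the image of the classical Apollonian gasket under a M\"obius map whose pole lies off the gasket. Such a map is bi-Lipschitz on a neighbourhood of the compact set, so Hausdorff dimension is preserved.

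\emph{Step 2: invoke the rigorous numerics.} All Apollonian gaskets are M\"obius equivalent, and their common dimension is the quantity computed in \cite{VytnovaWormell2025}. That computation proceeds through the spectral radius of a transfer operator for the associated hyperbolic (induced, or Bowen--Series type) expansion, with validated error bounds. It yields $1.30568672804987718464598620685\ldots$ with error below $10^{-129}$. Quoting that theorem gives the corollary.

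\emph{Main obstacle.} The delicate point is Step 1: one must check that the set called $\mathcal A_1$ in \Cref{ap:thm:main} is genuinely the full Apollonian residual set. Since $t=1$ is parabolic, $\mathcal A_1$ is defined as the invariant compact set of the generators rather than by the contraction mapping theorem. I would handle this as follows:
\begin{itemize}
\item use the address quotient argument from the proof of \Cref{ap:cor:rauzy-apollonian-homeo}, where cylinder diameters tend to zero, including the parabolic normal forms $G_0^n(z)=z/(nz+1)$;
\item deduce that the coding map is well defined and continuous, and that its image is the unique nonempty compact invariant subset of the curvilinear triangle;
\item note that the classical gasket is such a set, so the two sets coincide.
\end{itemize}
Once this identification is secured, the numerical value is purely a citation.
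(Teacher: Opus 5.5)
Your proposal is correct and takes the paper's route. The paper's proof observes that at $t=1$ the maps $\Phi_{i,1}$ form the classical equilateral Apollonian parabolic system (via the rotation $z\mapsto-\ii z$) and quotes Vytnova--Wormell. Your extra steps are harmless elaborations: bi-Lipschitz invariance of $\dimH$ under a M\"obius map whose pole is off the set, and the uniqueness argument for the invariant compact set. The latter is largely moot, since \Cref{ap:thm:main} defines $\mathcal A_1$ directly as the equilateral Apollonian gasket generated by the endpoint maps.
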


\begin{proof}
At $t=1$ the family is the classical equilateral Apollonian parabolic system, so this is exactly the theorem of Vytnova and Wormell \cite{VytnovaWormell2025}. Earlier transfer-operator approximations go back to McMullen \cite{McMullen1998}.
\end{proof}

\begin{remark}
\Cref{ap:thm:exact-dim} gives an exact formula for every intermediate fiber, but it is variational rather than closed-form. What remains outside the scope of this section is a more explicit evaluation of the supremum as a function of $t$, analogous to the much subtler parabolic computation at $t=1$.
\end{remark}

\section{Comparison of the projective and conformal endpoints}
The two parameterized families studied above form a three-stage comparison scheme
\[
\text{Rauzy gasket}\quad\longleftrightarrow\quad
\text{Sierpi\'nski gasket}\quad\longleftrightarrow\quad
\text{equilateral Apollonian gasket}.
\]
The first arrow is affine-projective: the maps act on the simplex and become the Rauzy inverse branches at a projective endpoint. The second arrow is conformal: the maps are M\"obius transformations preserving a common carrier disk for $t<1$ and converge to parabolic Apollonian generators at the endpoint. The finite-level approximants have different meanings in the two families. In the affine-projective family, finite-level cylinder complements make the non-homogeneous projective scaling visible; in the conformal family, the same triangular combinatorics is realized by gaps whose boundaries deform toward mutually tangent circles.

Thus the two deformations are best compared through the exact maps and finite-level approximants, rather than identified dynamically. The affine-projective family records a degeneration from the Sierpi\'nski gasket to the Rauzy gasket; the conformal family records a deformation from the Sierpi\'nski gasket to the equilateral Apollonian gasket.

The boundary-normalized endpoint in \cref{ap:thm:boundary-model,ap:cor:rauzy-apollonian-homeo} sharpens this comparison. The Rauzy and Apollonian endpoints are not conformally or projectively the same dynamical system, but their distinguished sides can be represented by the same interval and by the same two fractional branches
\[
  x\longmapsto \frac{x}{x+1},
  \qquad
  x\longmapsto \frac{1}{2-x}.
\]
The resulting homeomorphism fixes the common side pointwise and separates the exact topological equivalence from the different projective and conformal geometries carried by the two endpoints.  In this setting, projective, affine, and conformal triangular fractals can be compared within a single exact framework while retaining the distinct nature of their endpoint dynamics.

\section*{Supplementary material}
A supplementary interactive implementation accompanies the article:
\begin{center}
\href{https://complextrees.com/rauzy-sierpinski-apollonian}{\nolinkurl{complextrees.com/rauzy-sierpinski-apollonian}}.
\end{center}
It displays finite-level approximants of the affine-projective, conformal, unified, and boundary-normalized side-fixed views described above, and provides STL export for selected three-dimensional embeddings. The mathematical definitions and results are those stated in the article.

\section*{Funding}
The author was supported by the Ministerio de Ciencia, Innovaci\'on y Universidades, Agencia Estatal de Investigaci\'on, and FEDER, UE, under Grant PID2023-146424NB-I00, and by the Universitat de Girona and Banco Santander Grant Programme for Researchers in Training (IFUdG 2022--2024). The ICERM residency during the \emph{Illustrating Mathematics} program was supported by National Science Foundation Grant DMS1439786 and Alfred P. Sloan Foundation award G-2019-11406.

\section*{Acknowledgements}
The author is grateful to Pierre Arnoux for suggesting at ICERM 2019 the problem of constructing a three-dimensional model relating the Rauzy gasket to the Sierpi\'nski and Apollonian gasket geometries, and for reading an early version of the affine-projective construction. Arnoux's comments led in particular to the clarification of finite-level approximants in \cref{fig:crosssections} and to the boundary-normalized Apollonian model of \cref{ap:subsec:boundary-normalized}. The author thanks David Juher and Joan Salda\~na for helpful discussions. He thanks the organizers of \emph{Integrating Research and Illustration in Number Theory} (Institut Henri Poincar\'e, Paris, March 23--27, 2026)---Xavier Caruso, Ellen Eischen, Catherine Hsu, and Katherine E. Stange---for the setting in which he met Pierre Arnoux again and returned to this project. He also thanks the organizers of \emph{Complex dynamics: connections to other fields} (Ch\k{e}ciny, 27--31 March 2023), part of the mini-semester \emph{Modern holomorphic dynamics and related fields}, where Polina Vytnova (University of Surrey) gave the presentation \href{https://www.mimuw.edu.pl/~holdyn23/pages/presentations/presentation-Vytnova.pdf}{\emph{32 digits of the Hausdorff dimension of the Apollonian gasket}}; this presentation was an early motivation for the Apollonian side of the present comparison.

\bibliographystyle{amsplain}
\bibliography{references}

\end{document}